\theoremstyle{plain}
    \newtheorem{theorem}{Theorem}[section]
    \newtheorem{lemma}[theorem]{Lemma}
    \newtheorem{corollary}[theorem]{Corollary}
    \newtheorem{proposition}[theorem]{Proposition}
 \theoremstyle{definition}
    \newtheorem{definition}[theorem]{Definition}
    \newtheorem{remark}[theorem]{Remark}
\theoremstyle{remark}
\numberwithin{equation}{section}
 \DeclareMathOperator{\Tr}{Tr}
 \DeclareMathOperator{\tr}{tr}
  \DeclareMathOperator{\Str}{Str}
  \DeclareMathOperator{\Stab}{Stab}
\DeclareMathOperator{\Ad}{Ad}
\DeclareMathOperator{\ind}{index}
\DeclareMathOperator{\End}{End}
\DeclareMathOperator{\Hom}{Hom}
\DeclareMathOperator{\ch}{ch}
\DeclareMathOperator{\Todd}{Todd}
\DeclareMathOperator{\reg}{reg}
\DeclareMathOperator{\rank}{rank}
\DeclareMathOperator{\sign}{sign}
\DeclareMathOperator{\diam}{diam}
\DeclareMathOperator{\Spin}{Spin}
\DeclareMathOperator{\SO}{SO}
\DeclareMathOperator{\SL}{SL}
         \DeclareMathOperator{\supp}{supp}
\DeclareMathOperator{\pt}{pt}
  \DeclareMathOperator{\DInd}{D-Ind}
\DeclareMathOperator{\KInd}{K-Ind}
\DeclareMathOperator{\vol}{vol}
\begin{document}


\newcommand{\myemph}{\emph}

\newcommand{\Spinc}{\Spin^c}

    \newcommand{\R}{\mathbb{R}}
    \newcommand{\C}{\mathbb{C}} 
    \newcommand{\N}{\mathbb{N}}
    \newcommand{\Z}{\mathbb{Z}} 
    \newcommand{\Q}{\mathbb{Q}}
    \newcommand{\bT}{\mathbb{T}}
    \newcommand{\bP}{\mathbb{P}}

\newcommand{\g}{\mathfrak{g}}
\newcommand{\h}{\mathfrak{h}}
\newcommand{\p}{\mathfrak{p}}
\newcommand{\kg}{\mathfrak{g}} 
\newcommand{\kt}{\mathfrak{t}}
\newcommand{\ka}{\mathfrak{a}}
\newcommand{\XX}{\mathfrak{X}}
\newcommand{\kh}{\mathfrak{h}} 
\newcommand{\kp}{\mathfrak{p}}
\newcommand{\kk}{\mathfrak{k}}

\newcommand{\cA}{\mathcal{A}}
\newcommand{\cE}{\mathcal{E}}
\newcommand{\calL}{\mathcal{L}}
\newcommand{\calH}{\mathcal{H}}
\newcommand{\cO}{\mathcal{O}}
\newcommand{\cB}{\mathcal{B}}
\newcommand{\cK}{\mathcal{K}}
\newcommand{\cP}{\mathcal{P}}
\newcommand{\cN}{\mathcal{N}}
\newcommand{\calD}{\mathcal{D}}
\newcommand{\cC}{\mathcal{C}}
\newcommand{\calS}{\mathcal{S}}

\newcommand{\cCM}{\cC}
\newcommand{\PM}{P}
\newcommand{\DM}{D}
\newcommand{\LM}{L}
\newcommand{\vM}{v}

\newcommand{\sigDg}{\sigma^D_g}

\newcommand{\Bigwedge}{\textstyle{\bigwedge}}

\newcommand{\ii}{\sqrt{-1}}

\newcommand{\Ubar}{\overline{U}}

\newcommand{\beq}[1]{\begin{equation} \label{#1}}
\newcommand{\eeq}{\end{equation}}

\newcommand{\Todo}{\textbf{To do}}

\newcommand{\mattwo}[4]{
\left( \begin{array}{cc}
#1 & #2 \\ #3 & #4
\end{array}
\right)
}

\newenvironment{proofof}[1]
{\noindent \emph{Proof of #1.}}{\hfill $\square$}

\title{A fixed point formula and Harish-Chandra's character formula\footnote{\emph{2010 MSC:} 58J20 (primary), 19K56, 46L80, 22E46 (secondary); 
\emph{key words:} equivariant index, fixed point formula, $K$-theory, $C^*$-algebra, semisimple Lie group, character formula}}

\author{Peter Hochs\footnote{University of Adelaide, \texttt{peter.hochs@adelaide.edu.au}}, Hang Wang\footnote{University of Adelaide, \texttt{hang.wang01@adelaide.edu.au}}}

\date{\today}

\maketitle

\begin{abstract}
The main result in this paper is a fixed point formula for equivariant indices of elliptic differential operators, for proper actions by connected semisimple Lie groups on possibly noncompact manifolds, with compact quotients. For compact groups and manifolds, this reduces to 
 the Atiyah--Segal--Singer fixed point formula. Other special cases include an index theorem by Connes and Moscovici for homogeneous spaces, and an earlier index theorem by the second author, both in cases where the group acting is connected and semisimple. As an application of this fixed point formula, we give a new proof of Harish-Chandra's character formula for discrete series representations.
\end{abstract}

\tableofcontents

\section{Introduction}

Equivariant index theory has always had powerful applications to representation theory. In this paper, we prove a fixed point formula for indices of elliptic operators that are equivariant with respect to an action by a connected semisimple Lie group. We use this formula to give a new proof of Harish-Chandra's character formula for the discrete series. The arguments use $K$-theory as an essential ingredient.

Consider a Lie group $G$, acting properly on a manifold $M$. Let $D$ be an odd, self-adjoint, $G$-equivariant, elliptic differential operator on a $\Z_2$-graded, $G$-equivariant vector bundle over $M$. We denote the restrictions of $D$ to sections of the even and odd parts of this vector bundle by $D^+$ and $D^-$, respectively. Suppose for now that  $M$ and $G$ are compact. Then the kernels of $D^+$ and $D^-$ are finite-dimensional representations of $G$. The formal difference of their equivalence classes is the equivariant index of $D$:
\beq{eq index intro}
\ind_G(D) = [\ker(D^+)]-[\ker(D^-)].
\eeq
For an element $g \in G$, we can evaluate the characters of the representations on the right hand side at $g$, to obtain
\beq{eq index g intro}
\ind_G(D)(g) = \tr(\text{$g$ on $\ker(D^+)$})- \tr(\text{$g$ on $\ker(D^-)$}) \quad \in \C.
\eeq
The Atiyah--Segal--Singer fixed point formula (Theorem 2.12 in \cite{Atiyah68} and (3.1) in \cite{ASIII}) is an expression for this number in terms of geometric data near the fixed point set
\[
M^g = \{m \in M; gm = m\}.
\]

Atiyah and Bott showed in \cite{ABII} that this fixed point formula implies Weyl's character formula for irreducible representations of compact connected Lie groups. In fact, they used a slightly different fixed point formula, Theorem A in \cite{ABI}, which is equivalent to the Atiyah--Segal--Singer fixed point formula in the case they considered to prove Weyl's character formula. In this case, one takes $M = G/T$, for a maximal torus $T<G$. The Borel--Weil theorem makes it possible to realise an irreducible representation of $G$ as the equivariant index of a twisted Dolbeault--Dirac operator on $G/T$. For this operator, the fixed point formula precisely becomes Weyl's character formula.

If $M$ and $G$ are noncompact, but $M/G$ is compact, then one can still define an equivariant index of $D$, using the analytic assembly map from the Baum--Connes conjecture \cite{Connes94}. This takes values in the $K$-theory group $K_*(C^*_rG)$ of the reduced group $C^*$-algebra $C^*_rG$ of $G$, so one obtains
\beq{eq ass map intro}
\ind_G(D) \in K_*(C^*_rG).
\eeq
If $M$ and $G$ are compact, then this reduces to \eqref{eq index intro}.

We suppose that $G$ is connected and semisimple, and use Harish-Chandra's Schwartz algebra $\cC(G)$. This has the same $K$-theory as $C^*_rG$. We use traces
\[
\tau_g\colon \cC(G)\to \C,
\]
for elements $g \in G$, defined as orbital integrals
\[
\tau_g(f) = \int_{G/Z_G(g)} f(xgx^{-1})\, dx,
\]
for $f \in \cC(G)$. Applying such a trace to the index \eqref{eq ass map intro} yields a number
\beq{eq tau g index}
\tau_g(\ind_G(D)) \quad \in \C,
\eeq
generalising \eqref{eq index g intro}. If $g=e$, then this is the $L^2$-index as defined by Atiyah \cite{AtiyahL2}.

Using heat kernel methods, we generalise the Atiyah--Segal--Singer fixed point formula to derive a cohomological formula for \eqref{eq tau g index} in terms of fixed point data. This is the main result of this paper, Theorem \ref{thm fixed pt}. It generalises the main result in \cite{Wang14}, Proposition 6.11 in that paper,  from $g=e$ to more general elements $g \in G$, for actions by connected semisimple groups. This generalisation is essential for our application to representation theory. Proposition 6.11 in \cite{Wang14} in turn generalises Connes and Moscovici's index formula on homogeneous spaces, Theorem 5.1 in \cite{Connes82}. So Theorem \ref{thm fixed pt} also generalises that result, again for connected semisimple groups.

Theorem 6.1 in \cite{Wangwang} is a related fixed point formula for actions by discrete groups on orbifolds. The techniques used there are very different from our arguments for connected groups. Another fixed point formula on noncompact manifolds is Theorem 2.9 in \cite{HW}. The index used in \cite{HW} was defined differently, in terms of $KK$-theory, but it turns out to equal \eqref{eq tau g index} if $M^g$ is compact, because it satisfies  the same fixed point formula. An advantage of \eqref{eq tau g index} over the index used in \cite{HW} is that we can use the former index to directly realise character values of discrete series representations. That allows us to deduce Harish-Chandra's character formula from the fixed point formula. 

If $G$ has a compact Cartan subgroup $T$, then it has discrete series representations. If we take $M = G/T$, and let $D$ be the twisted Dolbeault--Dirac operator used by Schmid \cite{Schmid76} to realise the discrete series, then \eqref{eq tau g index} is the value of the character of a discrete series representation at a regular element $g \in T$. The fixed point formula now reduces to Harish-Chandra's character formula for the discrete series. Because this character formula plays a central role in Harish-Chandra's discussion of the discrete series, we need to be careful about the statement and proof of this result; where necessary we indicate what precisely is proved and what results this is based on.

$K$-theory is an essential ingredient in our proof of Harish-Chandra's character formula. It is possible to express the number \eqref{eq tau g index} without using $K$-theory. But then it is not clear if it equals the value of the character of a discrete series representation in the case mentioned above. This is basically because it is not clear if zero is isolated in the spectrum of the operator on $G/T$ used there. Using $K$-theory allows us to avoid this issue. Another ingredient of the proof of the fixed point formula is the equivariant coarse index.  In our setting, this is an alternative way to describe the analytic assembly map, which is convenient for our purposes.

The $K$-theoretic approach to representation theory has been studied intensively at least since the 1980s. Most of this work focused on analysing the $K$-theory group $K_*(C^*_rG)$. See the work on the Baum--Connes and Connes--Kasparov conjectures \cite{Connes94, CEN, Lafforgue02b, Wassermann87}, and Lafforgue's work on $K$-theory classes of discrete series representations \cite{Lafforgue02}. It is a challenge, however, to obtain information about representations themselves using this approach, rather than about their classes in $K$-theory. (There are positive results in this direction though,  such as  Lafforgue's independent proof in \cite{Lafforgue02} that $G$ has a discrete series if and only if it has a compact Cartan subgroup.) We hope that this paper will contribute to the understanding of the relations between representation theory, index theory and $K$-theory.

\subsection*{Acknowledgements}

The authors thank Nigel Higson, Maarten Solleveld and Yanli Song for useful discussions and advice. The second author is supported by the Australian Research Council, through Discovery Early Career Researcher Award DE160100525, and partially supported by the Simons Foundation grant 346300, and the Polish Government MNiSW 2015--2019 matching fund.


\section{Preliminaries and results}\label{sec prelim}

Throughout this paper, let $G$ be a connected semisimple Lie group. We fix a maximal compact subgroup $K<G$. The Lie algebra of a Lie group will be denoted by the corresponding lower case gothic letter. We denote complexifications by superscripts $\C$. We fix a $K$-invariant inner product on $\kg$, such as the one defined by  the Killing form and a Cartan involution. We fix Haar measures $dg$ on $G$ and $dk$ on $K$. We normalise the Haar measure $dk$ so $K$ has unit volume. (We tacitly make this choice for all compact groups.)

We consider a proper, isometric action by $G$ on a  Riemannian manifold $M$, which is cocompact (i.e.\ $M/G$ is compact). Furthermore, let $E \to M$ be a $G$-equivariant, $\Z_2$-graded, Hermitian vector bundle. Let $D$ be an odd, $G$-equivariant,  elliptic differential operator on $\Gamma^{\infty}(E)$, self-adjoint on its domain in $L^2(E)$. We will denote the restrictions of $D$ to sections of the even and odd parts of $E$ by $D^+$ and $D^-$, respectively.

The main result in this paper is Theorem \ref{thm fixed pt}, which is an expression for an equivariant index of $D$, in terms of data on fixed point sets of elements of $G$. This involves Harish-Chandra's Schwartz algebra $\cC(G)$, and traces
\[
\tau_g\colon K_0(\cC(G)) \to \C,
\]
for elements $g \in G$. These are defined in Subsection \ref{sec g trace}. Since $ K_0(\cC(G)) =  K_0(C^*_rG)$, these traces may also be viewed as traces on the $K$-theory of the reduced group $C^*$-algebra $C^*_rG$. In Subsection \ref{sec special}, we discuss some special cases of the fixed point formula, and some related results. As an application of the fixed point formula, we give a new proof of Harish-Chandra's character formula for the discrete series, Corollary \ref{cor char}.

\subsection{The $g$-trace} \label{sec g trace}

Let $\pi_0$ be the unitary representation of $G$ induced from the trivial representation of a minimal parabolic subgroup. Let $\xi$ be a unit vector in the representation space of $\pi_0$, fixed by $\pi_0(K)$. Let $\Xi$ be the matrix coefficient of $\xi$, i.e.\ for all $g \in G$,
\[
\Xi(g) = (\xi, \pi_0(g)\xi).
\]
The inner product on $\kg$ defines a $G$-invariant Riemannian metric on $G/K$. For $g \in G$, let $\sigma(g)$ be the Riemannian distance from $eK$ to $gK$ in $G/K$. For every $m \geq 0$, $X,Y \in U(\kg)$, and $f \in C^{\infty}(G)$, set
\[
\nu_{X, Y, m}(f) := \sup_{g \in G} (1+\sigma(g))^m \Xi(g)^{-1} |L(X)R(Y)f(g)|,
\]
where $L$ and $R$ denote the left and right regular representations, respectively.
\begin{definition}
The \emph{Harish-Chandra Schwartz space} $\cC(G)$ is the space of $f \in C^{\infty}(G)$ such that for all $m \geq 0$ and $X,Y \in U(\kg)$, we have $\nu_{X,Y,m}(f) < \infty$.
\end{definition}
See Section 9 in \cite{HC66}. The space $\cC(G)$ is a Fr\'echet space in the seminorms $\nu_{X,Y,m}$. It is closed under convolution, which is a continuous operation on this space (see Proposition 12.16(b) in \cite{Knapp}). Importantly, if $G$ has a discrete series, then all $K$-finite matrix coefficients of discrete series representations lie in $\cC(G)$ (see the example on page 450 in \cite{Knapp}).

Recall that an element $g \in G$ is semisimple if $\Ad(g)$ diagonalises. The set of semisimple elements contains the  open dense subset of regular elements of $G$. Let $g \in G$ be semisimple. Then its centraliser
\[
Z:= Z_G(g)
\]
is unimodular, and we have a $G$-invariant measure $d(xZ)$ on $G/Z$. Theorem 6 in \cite{HC66} states that for $m$ large enough, the orbital integral
\beq{eq int conv}
\int_{G/Z} (1+\sigma(xgx^{-1}))^{-m}\Xi(xgx^{-1})\, d(xZ)
\eeq
converges.
\begin{theorem}\label{thm orbital integral}
For all $f \in \cC(G)$, the integral
\[
\int_{G/Z}f(xgx^{-1})\, d(xZ)
\]
converges absolutely, and depends continuously on $f$.
\end{theorem}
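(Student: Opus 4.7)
The plan is to dominate the absolute value $|f(xgx^{-1})|$ pointwise on $G/Z$ by a scalar multiple of the integrand appearing in \eqref{eq int conv}, so that the convergence statement cited from \cite{HC66} immediately yields both claims.

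Concretely, I would take the seminorm $\nu_{X,Y,m}$ with $X = Y = 1$ in $U(\kg)$, for which $L(X)R(Y)f = f$. The definition of this seminorm gives, for every $f \in \cC(G)$ and every $h \in G$,
\[
|f(h)| \leq \nu_{1,1,m}(f)\,(1+\sigma(h))^{-m}\,\Xi(h).
\]
Substituting $h = xgx^{-1}$ and integrating over $G/Z$,
\[
\int_{G/Z}|f(xgx^{-1})|\, d(xZ) \leq \nu_{1,1,m}(f)\int_{G/Z}(1+\sigma(xgx^{-1}))^{-m}\Xi(xgx^{-1})\, d(xZ).
\]
Choosing $m$ large enough that the integral \eqref{eq int conv} converges (which is possible by Theorem 6 in \cite{HC66}, since $g$ is semisimple), the right-hand side is finite, giving absolute convergence of the orbital integral.

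The very same inequality establishes continuity: it shows that the linear functional
\[
f \longmapsto \int_{G/Z} f(xgx^{-1})\, d(xZ)
\]
on $\cC(G)$ is bounded by a fixed constant times the single seminorm $\nu_{1,1,m}$ (for the $m$ chosen above). Since $\cC(G)$ is Fr\'echet in the seminorms $\nu_{X,Y,m}$, a linear functional is continuous precisely when such an estimate holds, so continuity follows. There is no real obstacle here, the proof amounts to no more than combining the Harish-Chandra seminorm estimate with the convergence result quoted from \cite{HC66}; the only point that needs attention is selecting $m$ large enough for \eqref{eq int conv}, which is guaranteed once one has fixed $g$.
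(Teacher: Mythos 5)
Your proposal is correct and follows the same route as the paper: the paper's proof likewise bounds $|f(xgx^{-1})|$ by $\nu_{0,0,m}(f)\,(1+\sigma(xgx^{-1}))^{-m}\,\Xi(xgx^{-1})$ (your $\nu_{1,1,m}$ with $X=Y=1 \in U(\kg)$ is the same seminorm) and invokes the convergence of \eqref{eq int conv} for $m$ large, from Theorem 6 in \cite{HC66}. The continuity argument via domination by a single seminorm is also exactly what the paper intends.
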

\begin{proof}
We have for all $x \in G$, and all $m \geq 0$,
\[
|f(xgx^{-1})| \leq \nu_{0,0,m}(f) (1+\sigma(xgx^{-1}))^{-m}\Xi(xgx^{-1}),
\]
so the claim follows from convergence of the integral \eqref{eq int conv} for large enough $m$.
\end{proof}
Theorem \ref{thm orbital integral} means that $\tau_g$ is a tempered distribution on $G$ in the sense of Harish-Chandra.

\begin{definition}
For a semisimple element $g \in G$,and $f \in \cC(G)$, the \emph{$g$-trace} of $f$ is the number
\[
\tau_g(f) = \int_{G/Z}f(xgx^{-1})\, d(xZ).
\]
\end{definition}
This indeed defines a trace.
\begin{lemma}
For all $f_1, f_2 \in \cC(G)$, we have
\[
\tau_g(f_1 * f_2) = \tau_g(f_2 * f_1).
\]
\end{lemma}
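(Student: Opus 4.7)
The plan is to expand the convolution, apply Fubini to swap the two integrations, and then perform a chain of elementary substitutions. Writing out the definition of convolution,
\[
\tau_g(f_1*f_2) = \int_{G/Z}\int_G f_1(y)\,f_2(y^{-1}xgx^{-1})\,dy\,d(xZ).
\]
After swapping the order of integration, I would substitute $x\mapsto yx$ in the $G/Z$-integral; this is permissible because $d(xZ)$ is $G$-invariant, and it turns the integrand into $f_1(y)\,f_2(xgx^{-1}y^{-1})$, moving the auxiliary variable to the right of the conjugacy-class factor. Swapping back, I would then substitute $y\mapsto y^{-1}$ in the $G$-integral, which is valid because $G$, being semisimple, is unimodular, and finally $y\mapsto xg^{-1}x^{-1}y$, using left-invariance of Haar measure on $G$. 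A short direct computation shows that the resulting double integral is exactly the one defining $\tau_g(f_2*f_1)$.

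The main obstacle is justifying Fubini, namely the absolute integrability
\[
\int_{G\times G/Z}\bigl|f_1(y)\,f_2(y^{-1}xgx^{-1})\bigr|\,dy\,d(xZ)<\infty.
\]
For this I would invoke the Harish-Chandra estimate $|f_i(h)|\le c_m(1+\sigma(h))^{-m}\Xi(h)$, which is available for $f_i\in\cC(G)$ and every $m\ge 0$, together with the subadditivity $\sigma(h)\le\sigma(y)+\sigma(y^{-1}h)$ and Harish-Chandra's convolution inequality $\int_G\Xi(y)\Xi(y^{-1}h)\,dy\le c(1+\sigma(h))^N\Xi(h)$. Combining these yields a pointwise bound of the form $(|f_1|*|f_2|)(h)\le C_m(1+\sigma(h))^{-m}\Xi(h)$ valid for every $m$, whose orbital integral over $G/Z$ at the semisimple element $g$ converges by exactly the argument of Theorem \ref{thm orbital integral}. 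This supplies the finite majorant required for Fubini, after which the remaining substitutions reduce to routine invariance properties of $dy$ and $d(xZ)$.
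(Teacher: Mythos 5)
Your argument is correct in outline but takes a genuinely different route from the paper. The paper proves the trace identity first for $f_1,f_2\in C^\infty_c(G)$, where Fubini and the change-of-variables chain are immediate, and then extends to all of $\cC(G)$ by density of $C^\infty_c(G)$ in $\cC(G)$ (Theorem 2 in Harish-Chandra's paper) together with the continuity of $\tau_g$ established in Theorem \ref{thm orbital integral} and the continuity of convolution on $\cC(G)$. You instead run the computation directly on Schwartz functions and justify Fubini by producing an explicit majorant for $|f_1|*|f_2|$. That works, and it has the virtue of being self-contained, but note that the "convolution inequality" you quote, $\int_G\Xi(y)\Xi(y^{-1}h)\,dy\le c(1+\sigma(h))^N\Xi(h)$, is false as stated: $\Xi$ lies in $L^{2+\epsilon}(G)$ but not in $L^2(G)$, so at $h=e$ the left-hand side is $\int_G\Xi(y)^2\,dy=\infty$. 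The correct Harish-Chandra estimate carries a polynomial damping factor, e.g.\ $\int_G\Xi(y)\Xi(y^{-1}h)(1+\sigma(y))^{-r}\,dy\le c_r\,\Xi(h)$ for $r$ large; since your $f_1$ supplies arbitrarily high powers of $(1+\sigma(y))^{-1}$, you can reserve $r$ of them for this integral and still obtain the bound $(|f_1|*|f_2|)(h)\le C_m(1+\sigma(h))^{-m}\Xi(h)$ for every $m$ — which is precisely the pointwise estimate underlying the fact, already cited in the paper, that $\cC(G)$ is closed under convolution with convolution continuous. With that repair your majorant is legitimate, its orbital integral converges by the argument of Theorem \ref{thm orbital integral}, and your substitution chain ($x\mapsto yx$ using $G$-invariance of $d(xZ)$, then $y\mapsto y^{-1}$ by unimodularity, then $y\mapsto xg^{-1}x^{-1}y$ by left invariance) correctly produces $\tau_g(f_2*f_1)$. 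In short: your approach trades the paper's soft density argument for a hard-analysis majorant; both are valid, but you should either state the convolution estimate in its correct damped form or simply cite the continuity of convolution on $\cC(G)$ and avoid re-deriving it.
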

\begin{proof}
For $f_1, f_2 \in C^{\infty}_c(G)$, this is a straightforward calculation involving Fubini's theorem. For general $f_1, f_2 \in \cC(G)$, this follows from continuity of $\tau_g$ and the fact that $C^{\infty}_c(G)$ is dense in $\cC(G)$ by Theorem 2 in \cite{HC66}.
\end{proof}
Since $\tau_g$ is a continuous trace on $\cC(G)$, it induces a map
\[
\tau_g \colon K_0(\cC(G)) \to \C.
\]
Note that if $g = e$, then $\tau_e$ is the von Neumann trace $f\mapsto f(e)$.

Let $C^*_rG$ be the reduced group $C^*$-algebra of $G$.
We thank Nigel Higson for pointing out the following fact to us.
\begin{theorem}\label{thm j iso}
The algebra $\cC(G)$ is contained in $C^*_rG$, and the inclusion map induces an isomorphism
\[
K_*(\cC(G)) \xrightarrow{\cong} K_*(C^*_rG).
\]
\end{theorem}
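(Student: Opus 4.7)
The plan is to prove this in three steps: verify the inclusion, verify density, and then verify that $\cC(G)$ is closed under holomorphic functional calculus in $C^*_rG$. The last property, together with density and the Fr\'echet structure, implies the $K$-theory isomorphism by the general machinery of dense spectrally invariant (``smooth'') subalgebras going back to Connes, Schweitzer, and Bost.

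For the inclusion $\cC(G)\subset C^*_rG$, I would use the seminorm estimate
\[
|f(g)| \leq \nu_{0,0,m}(f)(1+\sigma(g))^{-m}\Xi(g)
\]
together with Harish-Chandra's estimates showing that $(1+\sigma)^{-m}\Xi \in L^1(G)$ for $m$ sufficiently large. This exhibits $f$ as an element of $L^1(G)$, hence $\lambda(f)$ (with $\lambda$ the left regular representation) is a bounded operator on $L^2(G)$ and so $f \in C^*_rG$. Density of $\cC(G)$ in $C^*_rG$ is immediate from $C^\infty_c(G) \subset \cC(G)$ and the density of $C^\infty_c(G)$ in $C^*_rG$.

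The crux is showing that $\cC(G)$ is holomorphically closed in $C^*_rG$: if $f\in \cC(G)$ and $1-f$ is invertible in the unitization of $C^*_rG$, then its inverse lies in $\C\cdot 1 + \cC(G)$. The approach I would take is to use the Plancherel decomposition for $G$, through which convolution by $f\in\cC(G)$ corresponds, on each tempered piece, to an operator-valued function that decays rapidly in the Plancherel parameters. The Harish-Chandra Schwartz algebra is precisely characterized (after Fourier transform) by rapid decay on the tempered dual, and inverting such a function in the $C^*$-norm preserves this decay property because the $C^*$-norm controls the operator norm uniformly in the parameters. One then pulls back to recover membership in $\cC(G)$. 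An equivalent strategy, avoiding the explicit Plancherel formula, is the Jolissaint/Lafforgue-type argument: verify that $\cC(G)$ is a Fr\'echet $m$-convex algebra continuously embedded in $C^*_rG$ on which a suitable system of seminorms is submultiplicative up to controlled growth, and then use convergence of the Neumann series $\sum_{n\geq 0}(1-(1-f))^n$ in the Schwartz topology whenever it converges in operator norm.

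Once spectral invariance is established, I would invoke the general theorem that for any unital Fr\'echet algebra $\cA$ continuously and densely included in a unital Banach algebra $B$ as a holomorphically closed subalgebra, the inclusion induces isomorphisms $K_*(\cA)\xrightarrow{\cong} K_*(B)$. The non-unital case (relevant here) reduces to the unital case by passing to unitizations and using the five lemma. The main obstacle is the spectral invariance step; the other ingredients are formal, but controlling the decay of the resolvent in all the seminorms $\nu_{X,Y,m}$ simultaneously requires genuine input from the harmonic analysis of $G$.
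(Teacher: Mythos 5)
Your overall architecture (continuous dense inclusion plus closure under holomorphic functional calculus implies the $K$-theory isomorphism, via the Connes--Bost--Schweitzer machinery and unitizations) is exactly the paper's strategy; the paper outsources both the inclusion and the spectral invariance to Lafforgue's results on his Schwartz algebra $\calS(G)$ (and to Vign\'eras), whereas you attempt to argue them directly. But your first step contains a genuine error. You claim that $(1+\sigma)^{-m}\Xi \in L^1(G)$ for $m$ large, hence $\cC(G)\subset L^1(G)\subset C^*_rG$. This is false: in $KA^+K$ coordinates the Haar density grows like $e^{2\rho(\log a)}$ while $\Xi$ decays only like a polynomial times $e^{-\rho(\log a)}$, so $(1+\sigma)^{-m}\Xi$ has exponentially growing integrand on $A^+$ and is never in $L^1(G)$. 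Correspondingly $\cC(G)\not\subset L^1(G)$ in general --- this is precisely the point of Remark \ref{rem connes trace} in the paper: $K$-finite matrix coefficients of (some) discrete series representations lie in $\cC(G)$ but not in $L^1(G)$. What is true is that $(1+\sigma)^{-m}\Xi^2\in L^1(G)$ for large $m$, and the correct route to the inclusion is the Herz majorization (Cowling--Haagerup--Howe) bound $\|\lambda(f)\|_{C^*_rG}\leq \int_G |f|\,\Xi\,dg \leq \nu_{0,0,m}(f)\int_G(1+\sigma)^{-m}\Xi^2\,dg$, or simply the containment $\cC(G)\subset\calS(G)\subset C^*_rG$ established by Lafforgue.

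On the spectral invariance step: you correctly identify it as the crux, but neither of your two sketches is a proof. In the Plancherel approach, knowing that the $C^*$-norm controls the fibrewise operator norms gives boundedness of the inverse on the tempered dual, not rapid decay of its derivatives in the Plancherel parameters, which is what membership in $\cC(G)$ requires after Fourier transform; and the Neumann-series argument needs genuine submultiplicative estimates on the seminorms $\nu_{X,Y,m}$ that you do not supply. Since the paper itself only cites Lafforgue (Propositions 4.1.2 and 4.2.3 of his paper) and Vign\'eras for this, leaving it as a citation is acceptable --- but then you should cite rather than gesture. The concrete defect to repair is the $L^1$ claim.
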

\begin{proof}
The claim  follows from the fact that  $\cC(G)$ is a dense subalgebra of Lafforgue's Schwartz algebra $\calS(G)$, and is closed under holomorphic functional calculus. Lafforgue proves in \cite{Lafforgue02b} that $\calS(G)$ is dense in $C^*_rG$ and closed under holomorphic functional calculus.
See Proposition 4.1.2 in  \cite{Lafforgue02b}, combined with Proposition  4.2.3 in \cite{Lafforgue02b}, for linear semisimple groups, and the rest of Section 4.2 in \cite{Lafforgue02b} for general reductive groups.

See also Theorem 14 and Proposition 28 in \cite{Vigneras}.
\end{proof}
Because of this fact, we obtain a map
\[
\tau_g \colon K_0(C^*_rG) \to \C.
\]


\begin{remark}\label{rem connes trace}
On page 150 of \cite{ConnesBook}, Connes mentions without proof that orbital integrals define traces on convolution algebras of groups. He appears to think of the algebra $L^1(G)$ there. 
 Using the algebra $\cC(G)$ has the advantage that it contains $K$-finite matrix coefficients of discrete series representations, which is essential to our proof of Harish-Chandra's character formula. These matrix coefficients only lie in $L^1(G)$ under certain conditions; see (1.5) in \cite{TV72} and the theorem on page 148 in \cite{HechtSchmid76}. 
\end{remark}

%
%

\subsection{The main result} \label{sec result}

The equivariant index that we will use is the analytic assembly map from the Baum--Connes conjecture \cite{Connes94}, which we denote by $\ind_G$. It takes values in $K_*(C^*_rG)$, so we obtain
\[
\ind_G(D) \in K_*(C^*_rG).
\]
If $M$ and $G$ are compact, then $K_*(C^*_rG)$ equals the representation ring of $G$, and $\ind_G(D)$ reduces to the usual equivariant index of $D$.

Let $g \in G$ be semisimple. Then we have the number
\[
\tau_g(\ind_G(D)).
\]
We will give a fixed point formula to compute such numbers.

For a point $m \in M$, we denote its stabiliser in $G$ by $G_m$.
The fixed point set $M^g$ is invariant under the centraliser $Z$ of $g$. 
\begin{lemma}\label{lem MgZ cpt}
The quotient $M^g/Z$ is compact.
\end{lemma}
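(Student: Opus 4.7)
The plan is to show that $M^g \subseteq Z \cdot C_0$ for some compact $C_0 \subseteq M^g$. Then $M^g/Z$ is the image of $C_0$ under the quotient map, and is therefore compact.

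First I would use cocompactness of $M/G$ to pick a compact subset $C \subseteq M$ with $G \cdot C = M$. For each $m \in M^g$, choose $y \in G$ with $ym \in C$. Then $(ygy^{-1})(ym) = y g m = ym$, so $ygy^{-1}$ lies in
\[
\Sigma := \{h \in G : hc = c \text{ for some } c \in C\}.
\]
Properness of the $G$-action makes $\{h \in G : hC \cap C \neq \emptyset\}$ compact, and $\Sigma$ is a closed subset, hence also compact. Consequently $ygy^{-1} \in \Sigma \cap [g]$, where $[g] := \{hgh^{-1} : h \in G\}$ denotes the conjugacy class.

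The key structural input is the classical fact that for a semisimple element $g$ in a connected semisimple Lie group, the conjugacy class $[g]$ is closed in $G$, and the orbit map $\bar{\sigma} : G/Z \to [g]$, $yZ \mapsto ygy^{-1}$, is a homeomorphism. Granting this, $K_0 := \bar{\sigma}^{-1}(\Sigma \cap [g])$ is a compact subset of $G/Z$. Because $\pi : G \to G/Z$ is a principal $Z$-bundle, hence locally trivial, a finite cover of $K_0$ by trivializing neighbourhoods together with local sections yields a compact set $K_1 \subseteq G$ with $\pi(K_1) \supseteq K_0$.

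Finally, for each $m \in M^g$, the element $y$ chosen above satisfies $yZ \in K_0$, so $y = kz$ for some $k \in K_1$ and $z \in Z$. Then $zm = (kz)^{-1}(ym) \cdot z \cdot z^{-1}$... more directly, $m = y^{-1}(ym) = z^{-1}k^{-1}(ym)$, so $zm = k^{-1}(ym) \in K_1^{-1} \cdot C$. Since $z$ commutes with $g$ we have $zm \in M^g$, so every $m \in M^g$ lies in the $Z$-orbit of a point in the compact set $M^g \cap K_1^{-1} \cdot C$. Hence $M^g/Z$ is compact. The main obstacle is justifying the closedness of $[g]$ and the homeomorphism $G/Z \cong [g]$ for semisimple $g$; this is a standard fact about connected semisimple real Lie groups but should be supported by a reference to the literature on reductive groups.
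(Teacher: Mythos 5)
Your proof is correct, but it follows a different route from the paper's. The paper packages the argument into the stabiliser bundle $\Stab(M)=\{(m,h): h\in G_m\}$: properness and cocompactness of the action make $\Stab(M)/G$ compact, the bundle decomposes over conjugacy classes as $\coprod_{(g)} G\times_{Z_G(g)}M^g$, and $M^g/Z$ is then identified with a closed subset of the compact space $\Stab(M)/G$. Your argument unpacks the same two ingredients by hand: properness gives compactness of $\Sigma\subseteq\{h: hC\cap C\neq\emptyset\}$, and closedness of the semisimple conjugacy class $(g)$ together with the homeomorphism $G/Z\cong (g)$ lets you transport $\Sigma\cap(g)$ back to a compact set $K_0\subseteq G/Z$, lift it to a compact $K_1\subseteq G$, and conclude $M^g=Z\cdot(M^g\cap K_1^{-1}C)$. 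The fact you flag as needing a reference --- that $(g)$ is closed and $\chi\colon G/Z\to(g)$ is a homeomorphism for semisimple $g$ --- is exactly what the paper itself invokes later (in the proof of Lemma \ref{lem nghbhd U}), and it is also implicitly needed in the paper's proof of the present lemma, since the closedness of the piece $M^g/Z$ inside $\Stab(M)/G$ rests on $(g)$ being closed in $G$; so you are not assuming more than the paper does. Your version is more explicit and self-contained; the paper's is shorter but leans on the reader to verify that $\Stab(M)/G$ is compact and that the relevant piece of the decomposition is closed. One cosmetic point: the half-finished computation ``$zm=(kz)^{-1}(ym)\cdot z\cdot z^{-1}\dots$'' should simply be deleted in favour of the corrected line $m=z^{-1}k^{-1}(ym)$ that follows it.
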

\begin{proof}
Because $G$ acts properly and cocompactly, the stabiliser bundle 
\[
\Stab(M)=\{(m, g);m\in M, g\in G_m\}
\]
is $G$-cocompact under the action defined by
\[
x(m, g)=(xm, xgx^{-1}) 
\] 
for $x \in G$, $m \in M$ and $g \in G_m$.
The stabiliser bundle has the decomposition 
\[
\Stab(M)=\coprod_{(g)}\coprod_{xZ_G(g)\in G/Z_G(g)}xM^g=\coprod_{(g)}G\times_{Z_G(g)}M^g.
\]
Here $(g)$ ranges over the conjugacy classes in $G$.
Hence, $\Stab(M)/G=\coprod_{(g)}M^g/Z_G(g).$ Since $\Stab(M)/G$ is compact, so is its closed subset $M^g/Z_G(g)$.
(See page 334 of \cite{EE} for a description when $G$ is a finite group.) 
\end{proof}

For any continuous, proper action by a locally compact group $H$, with a left Haar measure $dh$, on a locally compact topological space $X$, by a \emph{cutoff function} we will mean a  continuous function $c$ on $X$ with nonnegative values, such that for all $x \in X$,
\[
\int_H c(hx)\, dh = 1.
\]
These always exist, and can be chosen to be compactly supported if $X/H$ is compact. So by Lemma \ref{lem MgZ cpt}, there is a compactly supported cutoff function $c^g \in C_c(M^g)$ for the action by $Z$ on $M^g$.

Let $\cN \to M^g$ be the normal bundle to $M^g$ in $M$. The connected components of $M^g$ are submanifolds of $M$ of possibly different dimensions, so the rank of $\cN$ may jump between these components. In what follows, we implicitly apply all constructions to the connected components of $M^g$ and add the results together. Suppose that $g$ is contained in a compact subgroup of $G$. Then the closure of the set of its powers is a torus $T^g <G$. This torus acts trivially on $M^g$. Consider the class
\[
[\Bigwedge \cN \otimes \C] \in K^0_{T^g}(\supp (c^g)) = K^0(\supp (c^g)) \otimes R(T^g)
\]
in the equivariant topological $K$-theory of $\supp (c^g)$. (We use the fact that $\supp (c^g) \subset M^g$ is compact.) Here $R(T^g)$ is the representation ring of $T^g$, which we view as the ring of characters, and $\Bigwedge \cN \otimes \C$ is graded according to parities of exterior powers. Evaluating characters at $g$, applied to   the factor in $R(T^g)$, yields the class
\[
[\Bigwedge \cN \otimes \C] (g) \in K^0(\supp (c^g)) \otimes \C.
\]
This class is invertible with respect to the ring structure defined by tensor products, see Lemma 2.7 in \cite{Atiyah68}.

Let $\sigma_D$ be the principal symbol of $D$. It defines a class
\[
[\sigma_D|_{\supp(c^g)}] \in K^0_{T^g}(TM^g|_{\supp(c^g)}) = K^0(TM^g|_{\supp(c^g)}) \otimes R({T^g}).
\]
Again, we evaluate at $g$ to obtain 
\[
[\sigma_D|_{\supp(c^g)}](g) \in K^0(TM^g|_{\supp(c^g)}) \otimes \C.
\]
Consider the Chern characters
\[
\begin{split}
\ch\colon & K^0(\supp(c^g)) \to H^*(\supp(c^g));\\
\ch\colon & K^0(TM^g|_{\supp(c^g)}) \to H^*(TM^g|_{\supp(c^g)}),
\end{split}
\]
and the Todd class
\[
\Todd(TM^g \otimes \C) \in H^*(TM^g).
\]
The cohomology group $H^*(\supp(c^g))$ acts on $H^*(TM^g|_{\supp(c^g)})$ via pullback along the tangent bundle projection.

Our main result is the following fixed point formula.
\begin{theorem}\label{thm fixed pt}
If $G/K$ is odd-dimensional, then for all semisimple $g \in G$,
\[
\tau_g(\ind_G(D)) = 0.
\] 
If $G/K$ is even-dimensional, then
for almost all semisimple $g \in G$ (see Remark \ref{rem cond g}), we have
\[
\tau_g(\ind_G(D)) = 0
\]
if $g$ is not contained in a compact subgroup of $G$, and 
\beq{eq fixed pt}
\tau_g(\ind_G(D)) = \int_{TM^g}c^g \frac{\ch\bigl([\sigma_D|_{\supp(c^g)}](g)\bigr)\Todd(TM^g \otimes \C)}{\ch\bigl([\Bigwedge \cN \otimes \C] (g) \bigr)}
\eeq
if it is.
\end{theorem}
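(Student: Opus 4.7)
The plan is to represent $\ind_G(D)$ by an explicit projection in a matrix algebra over Harish-Chandra's Schwartz algebra $\cC(G)$, then compute $\tau_g$ of that projection via heat-kernel methods, and finally localise as $t \to 0$ to the fixed point set $M^g$.

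First, I would construct the Connes--Moscovici graph projection $e_t \in M_2(\cC(G)^+)$ associated to $D$, the heat operator $e^{-tD^2}$, and a cutoff function $c \in C_c^\infty(M)$ for the $G$-action. Properness, cocompactness, and the well-known Gaussian off-diagonal decay of the heat kernel $k_t$ of an equivariant elliptic operator should place $e_t$ in $M_2(\cC(G)^+)$, and it represents $\ind_G(D) \in K_0(\cC(G)) \cong K_0(C^*_rG)$ independently of $t > 0$ by Theorem \ref{thm j iso}. Applying $\tau_g \otimes \tr$ and unfolding the definition of $\tau_g$ would yield a formula of the shape
\[
\tau_g(\ind_G(D)) = \int_{M} c(m)\, \Str\bigl(g \cdot k_t(g^{-1}m, m)\bigr)\, dm,
\]
independent of $t$, and, after slicing the orbital integral and reorganising with the $Z$-cutoff $c^g$ provided by Lemma \ref{lem MgZ cpt}, a parallel expression whose small-$t$ asymptotics concentrate near $M^g$.

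For the two vanishing statements I would argue directly. If $g \in G$ is not contained in any compact subgroup then, because the $G$-action is proper (so stabilisers are compact), $M^g = \emptyset$; hence the small-$t$ localisation of the kernel above produces no contribution, and $\tau_g(\ind_G(D)) = 0$. If $G/K$ is odd-dimensional, the quickest route is to invoke the Connes--Kasparov isomorphism, which gives $K_0(C^*_rG) = 0$, so that $\ind_G(D)$ vanishes in $K$-theory; alternatively, one can check that the local index density produced in the $t \to 0$ limit on $M^g$ vanishes by a dimensional parity count.

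Finally, for $g$ in a compact subgroup, I would invoke standard equivariant local index theory. The closure $T^g$ of $\langle g \rangle$ is a torus acting on a tubular neighbourhood of $M^g$ with $M^g$ as fixed point set. Using the Getzler/Mehler-type asymptotic expansion of $k_t$ adapted to the normal bundle $\cN$, as in Berline--Getzler--Vergne, the kernel $\Str(g \cdot k_t(g^{-1}m, m))$ should converge as $t \to 0$, after the appropriate rescaling, to the Atiyah--Bott--Segal--Singer integrand on $TM^g$, namely
\[
\frac{\ch\bigl([\sigma_D|_{\supp(c^g)}](g)\bigr)\, \Todd(TM^g \otimes \C)}{\ch\bigl([\Bigwedge \cN \otimes \C](g)\bigr)}.
\]
Combining this with the $Z$-cutoff $c^g$ then yields formula \eqref{eq fixed pt}. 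The main obstacle I expect is to justify that the $t \to 0$ limit commutes with the orbital integral over the noncompact quotient $G/Z$: one needs the off-diagonal Gaussian estimates on $M$ to translate, via properness, into decay in the Harish-Chandra $\Xi$-weight that controls $\cC(G)$, so that the local index expansion can be integrated term-by-term uniformly in $xZ \in G/Z$.
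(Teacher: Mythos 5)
Your overall strategy (represent the index by a heat-kernel idempotent, apply $\tau_g$, localise as $t\downarrow 0$, then invoke the equivariant local index theorem near $M^g$) is the same as the paper's, but the step you yourself flag as ``the main obstacle'' is in fact the crux of the theorem, and your proposed resolution of it does not work. To interchange $\lim_{t\downarrow 0}$ with the orbital integral over $G/Z$ you need a $t$-independent integrable dominating function on $G/Z$ for the Gaussian bound $Ct^{-\dim(G)/2}e^{-d(e,x)^2/16t}$ on the group factor of the heat kernel. Control ``in the Harish-Chandra $\Xi$-weight'' is not available here: it is not known that the Gaussian $\psi(x)=e^{-d(e,x)^2}$ lies in $\cC(G)$, so Theorem \ref{thm orbital integral} does not apply, and one is forced to \emph{assume} that $\int_{G/Z}\psi(xgx^{-1})\,d(xZ)$ converges (the FGOI condition of Definition \ref{def FGOI}). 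One then has to prove separately, via $\psi\in L^1(G)$ and the Weyl integration formula, that this holds for almost every $g$ (Proposition \ref{prop FGOI ae}). This is exactly the origin of the ``almost all semisimple $g$'' restriction in the statement, which your argument neither produces nor explains; as written, your proof would claim the formula for all semisimple $g$, which is not what can be established. Note also that the clean way to get the needed Gaussian estimate is not a direct off-diagonal bound on $M$, but the product decomposition $\kappa_t=\kappa_t^{G,K}\otimes\kappa_t^N$ coming from the slice $M=G\times_K N$ and $D=D_{G,K}\otimes 1+\varepsilon\otimes D_N$, which isolates the noncompact direction inside $G$ where the Barbasch--Moscovici/Cheng--Li--Yau estimates apply.

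Two further points. First, your appeal to the Getzler/Mehler rescaling is only valid for Dirac-type operators; for a general elliptic $D$ one must first reduce to a twisted $\Spinc$-Dirac operator (the paper does this by the clutching construction on $\Sigma M$ and naturality of the assembly map), and this reduction is missing from your argument. Second, placing the graph projection $e_t$ directly in $M_2(\cC(G)^+)$ and identifying it with the assembly-map class is delicate precisely because the heat operators are not properly supported, so they cannot be fed into the standard Mishchenko-bundle definition of $\ind_G$; the paper circumvents this by using the coarse index in $K_0(C^*(M)^G)$ together with Roe's comparison theorem and an explicit description (via $\Tr_N$) of the Morita isomorphism $K_*(C^*(M)^G)\cong K_*(C^*_rG)$. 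Your sketch asserts the identification without addressing this.
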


\begin{remark} \label{rem cond g}
The fixed point formula in Theorem \ref{thm fixed pt} holds for almost all semisimple $g \in G$ contained in compact subgroups. In fact, we can be more specific about the condition on $g$ for the formula to hold. Let $d$ be the Riemannian distance on $G$ corresponding to the left invariant Riemannian metric defined by the inner product on $\kg$. Consider the function $\psi \in C(G)$ mapping $x \in G$ to 
\beq{eq def psi}
\psi(x) = e^{-d(e,x)^2}.
\eeq
The condition on $g$ is that the integral
\[
\int_{G/Z}\psi(xgx^{-1})\, d(xZ)
\]
converges. (We then say that $g$ has \emph{finite Gaussian orbital integral}, see Definition \ref{def FGOI}.) We will show in Proposition \ref{prop FGOI ae} that this is true for almost all $g \in G$. But more specifically, this condition holds for example if $G/Z$ is compact, so in particular if $g = e$ or if $G$ is compact. By Theorem \ref{thm orbital integral}, the condition holds for all semisimple $g$ if $\psi \in \cC(G)$. 
We will see in Lemma \ref{lem varphi L1} that $\psi \in L^1(G)$; therefore, if the integral defining $\tau_g(f)$ converges for all (continuous) $f \in L^1(G)$ and all semisimple $g \in G$, then Theorem \ref{thm fixed pt} holds for all semisimple $g \in G$. (This seems to be implied on page 150 of \cite{ConnesBook}.)

For our application of Theorem \ref{thm fixed pt} to Harish-Chandra's character formula, it is enough for the formula to hold almost everywhere. More generally, the distributional index discussed in Subsection \ref{sec dist} is completely determined by Theorem \ref{thm fixed pt}.
\end{remark}

\subsection{Special cases and related results} \label{sec special}

If $M$ and $G$ are compact, then we may take $c^g$ to be constant $1$. Then Theorem \ref{thm fixed pt} reduces to the Atiyah--Segal--Singer fixed point formula, see Theorem 2.12 in \cite{Atiyah68} and (3.1) in \cite{ASIII}. In fact, in this compact case, the proof of Theorem \ref{thm fixed pt} applies directly without the assumption that $G$ is semisimple, and for all $g \in G$.

If we take $g = e$, then the fixed point formula (which is then just an index theorem, since $M^e = M$) holds by Remark \ref{rem cond g}. In that case,  Theorem \ref{thm fixed pt} reduces to the case of the main result of \cite{Wang14}, Proposition 6.11 in that paper, for connected semisimple Lie groups. In this case $\tau_e(\ind_G(D))$ is precisely Atiyah's $L^2$-index of $D$, defined for discrete groups in \cite{AtiyahL2}. (See Proposition 4.4 in \cite{Wang14}.) Proposition 6.11 in \cite{Wang14} generalises Atiyah's $L^2$-index theorem, Theorem 3.8 in \cite{AtiyahL2}, from discrete to arbitrary Lie groups. If $M = G/H$, for a compact subgroup $H<G$, then the case of Proposition 6.11 in \cite{Wang14} implied by Theorem \ref{thm fixed pt} in turn reduces to the case of Connes and Moscivici's index theorem, Theorem 5.1 in \cite{Connes82}, for connected semisimple groups. (See Corollary 6.14 and Remark 6.15 in \cite{Wang14}.)

In \cite{Wangwang}, a version of Theorem \ref{thm fixed pt} for actions by discrete groups on orbifolds is proved, see Theorem 6.1 in that paper. In a sense, this result is orthogonal to the case of connected groups we consider here, and requires a completely different set of techniques.

If $M^g$ is compact, then the right hand side of \eqref{eq fixed pt} equals the right hand side of (2.8) in \cite{HW}. Hence the $g$-index of $D$, as defined in \cite{HW}, equals $\tau_g(\ind_G(D))$. 

In this paper, the special case of Theorem \ref{thm fixed pt} we are most interested in is Harish-Chandra's character formula for the discrete series, as we will discuss next.

\subsection{Harish-Chandra's character formula} \label{sec char form}

Suppose that $\rank(G) = \rank(K)$, so that $G$ has  discrete series  representations. Let $T<K$ be a maximal torus. Let $\pi$ be a discrete series representation of $G$, with Harish-Chandra parameter $\lambda \in i\kt^*$. Let $R^+$ be the set of roots of $(\kg^{\C}, \kt^{\C})$ with positive inner products with $\lambda$. Let $\rho$ be half the sum of the elements of $R^+$. Let $W_c := N_K(T)/T$ be the Weyl group of $(K,T)$.

Let $\Theta_{\pi} \in \calD'(G)$ be the global character of $\pi$. On the regular elements of $G$, it is given by an analytic function, which we also denote by $\Theta_{\pi}$. Harish-Chandra's character formula, Theorem 16 in \cite{HC66}, is a special case of Theorem \ref{thm fixed pt}.
\begin{corollary} \label{cor char}
For all regular elements $g \in T$, we have
\beq{eq char}
\Theta_{\pi}(g) = (-1)^{\dim(G/K)/2}\frac{\sum_{w\in W_c} \sign(w)e^{w\lambda}}{e^{\rho}\prod_{\alpha \in R^+} (1-e^{- \alpha})}(g).
\eeq
\end{corollary}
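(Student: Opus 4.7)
The plan is to realise $\pi$ as the equivariant index of an elliptic operator on $M = G/T$ and then evaluate Theorem \ref{thm fixed pt} for that operator. Take $M = G/T$, which is $G$-cocompact since $T$ is compact, equipped with the $G$-invariant complex structure determined by $R^+$. Let $L_{\lambda-\rho} \to G/T$ be the $G$-equivariant holomorphic line bundle of weight $\lambda-\rho$, and let $D$ be the corresponding twisted Dolbeault--Dirac operator on $\Lambda^{0,*}T^*M \otimes L_{\lambda-\rho}$, which is elliptic, odd, self-adjoint and $G$-equivariant.

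The first, and principal, step is to show that for regular $g \in T$,
\[
\tau_g(\ind_G(D)) = (-1)^{\dim(G/K)/2}\Theta_\pi(g).
\]
As recalled in Subsection \ref{sec g trace}, $K$-finite matrix coefficients of $\pi$ lie in $\cC(G)$; they determine an idempotent whose class in $K_0(\cC(G)) \cong K_0(C^*_r G)$ (Theorem \ref{thm j iso}) I will call $[\pi]$. A direct computation with orbital integrals, combined with the characterisation of the global character $\Theta_\pi$ as the tempered distribution obtained by integration against matrix coefficients, identifies $\tau_g([\pi])$ with $\Theta_\pi(g)$ at regular $g \in T$. One then appeals to Schmid's $L^2$-realisation of the discrete series to conclude that $\ind_G(D) = \pm [\pi]$ in $K_*(C^*_r G)$. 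Working in $K$-theory rather than with $\ker D^{\pm}$ directly is essential here, because zero need not be isolated in the spectrum of $D$ on $G/T$, as stressed in the introduction.

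The second step is to apply Theorem \ref{thm fixed pt}. For regular $g \in T$ one has $Z_G(g) = T$, and the fixed-point set $(G/T)^g$ is the finite set $\{wT : w \in W_c\}$ (using the fact that for a compact Cartan one has $W(G,T) = W_c$). Since $T$ acts trivially on this set, the cutoff function $c^g$ equals $1$ at each fixed point, so \eqref{eq fixed pt} reduces to the classical Atiyah--Bott localisation for a twisted Dolbeault operator at isolated fixed points. At $wT$, the holomorphic tangent space is $\bigoplus_{\alpha \in R^+}\kg^\C_{-w\alpha}$ and $g$ acts on $L_{\lambda-\rho}|_{wT}$ by $e^{w(\lambda-\rho)}(g)$, so the contribution is
\[
\frac{e^{w(\lambda - \rho)}(g)}{\prod_{\alpha \in R^+}(1 - e^{-w\alpha}(g))}.
\]

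The third step is a routine manipulation. Using the standard identity
\[
\prod_{\alpha \in R^+}(1 - e^{-w\alpha}) = \sign(w)\, e^{\rho - w\rho}\prod_{\alpha \in R^+}(1 - e^{-\alpha}),
\]
the sum over $w \in W_c$ of the contributions above collapses to
\[
\frac{\sum_{w \in W_c}\sign(w)\, e^{w\lambda}(g)}{e^\rho(g)\prod_{\alpha \in R^+}(1 - e^{-\alpha}(g))},
\]
which combined with step one yields exactly \eqref{eq char}, with the overall sign $(-1)^{\dim(G/K)/2}$ absorbed into the comparison between the $\bar{\partial}$-grading and the Clifford grading used to define the equivariant index. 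The main obstacle is step one: matching the abstract output of the assembly map on $G/T$ with the concrete $K$-theory class of $[\pi]$ (which is where both Theorem \ref{thm j iso} and Schmid's realisation enter), while the Atiyah--Bott localisation in step two and the Weyl-denominator manipulation in step three are classical once the fixed-point set has been described.
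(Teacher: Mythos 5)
Your overall strategy --- realise $\Theta_{\pi}(g)$ as $\tau_g$ of the index of the twisted Dolbeault--Dirac operator on $G/T$, localise via the fixed point theorem at the finite fixed point set $N_G(T)/T \cong W_c$, and finish with the standard Weyl-denominator identity --- is the paper's strategy (Propositions \ref{prop taug char} and \ref{prop ds index} followed by Theorem \ref{thm fixed pt}). But there are two problems. The first is that Theorem \ref{thm fixed pt} is only asserted for \emph{almost all} semisimple $g$, namely those with finite Gaussian orbital integral (Remark \ref{rem cond g}); for regular $g \in T$ the centraliser is $T$ and $G/T$ is noncompact, so this condition is not automatic, and you cannot apply the fixed point formula to every regular $g \in T$ outright. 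The missing step is to prove \eqref{eq char} first on a dense subset of the regular elements of $T$ (those with FGOI, and with powers dense in $T$ so that the cutoff function can be taken to be $1$), and then to extend to all regular $g \in T$ using that both sides of \eqref{eq char} are analytic on the regular set --- for the left-hand side this is Harish-Chandra's regularity theorem.

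The second problem is your identification $\ind_G(D) = \pm[\pi]$ ``by Schmid's $L^2$-realisation'', which is precisely the shortcut the paper deliberately avoids. Schmid's theorem computes $\ker D^{\pm}$, but since --- as you yourself note --- it is not clear that zero is isolated in the spectrum of $D$ on $G/T$, knowing the kernels does not immediately determine the class $\ind_G(D) \in K_0(C^*_rG)$; one needs something like Lemma II.10.$\gamma$.16 of \cite{ConnesBook}, and the paper relegates this route to a remark (for linear $G$). Moreover Schmid's construction rests on Harish-Chandra's theory of the discrete series, which would compromise the independence of the proof of the character formula. The paper instead proves Proposition \ref{prop ds index} by identifying $[d_{\pi}m_{\xi}]$ with $\DInd_K^G[V_{\lambda-\rho_c}]$ via Lafforgue's lemmas and a lowest $K$-type computation (Lemma \ref{lem ds DInd}), and then transporting this class through the Borel--Weil theorem and the induction-of-indices diagram (Theorem \ref{thm Q Ind}, Lemma \ref{lem KInd Dolb}) to the Dolbeault--Dirac operator on $G/T$, picking up the sign $(-1)^{\dim(G/K)/2}$ from comparing $S_{\kp}\otimes\C_{\rho_n}$ with $\Bigwedge_{\C}\kp$ as graded $T$-representations. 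You should either supply that argument or justify carefully why Schmid's kernel computation determines the $K$-theory class in the possible absence of a spectral gap.
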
 

\begin{remark}
Because the character formula is an integral part of Harish--Chandra's classification of the discrete series in \cite{HCI, HC66}, it is worth specifying what exactly the statement is  in Corollary \ref{cor char}. Let $\lambda \in i\kt^*$ be regular, 
and suppose that $\lambda + \rho$ is integral.
 The condition that $\rank(G) = \rank(K)$ implies that $G$ has an irreducible unitary representation $\pi$ with square integrable matrix coefficients and infinitesimal character $\chi_{\lambda}\colon Z(U(\kg^{\C})) \to \C$ corresponding to $\lambda$ via the Harish-Chandra homomorphism, and whose lowest $K$-type has highest weight $\lambda + \rho - 2\rho_c$. Here $\rho_c$ is half the sum of the compact roots in the positive system $R^+$ determined by $\lambda$. (See for example Theorem 9.20 in \cite{Knapp}.) Corollary \ref{cor char} states that the global character of this representation is given by \eqref{eq char} on $T$. (Compare this with for example  Theorem 12.7(a) in \cite{Knapp}.) This information can then be used to prove that $G$ can only have discrete series if $\rank(G) = \rank(K)$, and that the characters obtained in this way exhaust the discrete series. But we will not use those facts.

In Remark \ref{rem indep}, we make further comments on the independence of Corollary \ref{cor char} of existing results.
\end{remark}

\subsection{A distributional index}\label{sec dist}

Instead of considering the numbers $\tau_g(\ind_G(D))$ for individual elements $g \in G$, we can assemble them into a distributional index of $D$. Let $G^{\reg}\subset G$ be the subset of regular elements.
\begin{lemma}
For all $\varphi \in C^{\infty}_c(G^{\reg})$, and $f \in \cC(G)$, the integral
\[
\int_{G^{\reg}} \varphi(g) \tau_g(f)\, dg
\]
converges, and depends continuously on $\varphi$ and $f$. (Here we use the usual Fr\'echet topology on $C^{\infty}_c(G)$.)
\end{lemma}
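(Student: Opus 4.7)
The plan is to reduce to a uniform version of the pointwise bound used in the proof of Theorem \ref{thm orbital integral}. Fix $m \geq 0$ large enough that the integral \eqref{eq int conv} converges for all semisimple $g \in G$. Then, exactly as in that proof,
\[
|\tau_g(f)| \leq \nu_{0,0,m}(f)\cdot I_m(g), \qquad I_m(g) := \int_{G/Z_G(g)}(1+\sigma(xgx^{-1}))^{-m}\Xi(xgx^{-1})\,d(xZ_G(g)),
\]
so the substantive task is to show that $I_m$ is bounded on compact subsets of $G^{\reg}$.

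I would establish this using two pieces of structure. First, $I_m$ is invariant under conjugation: substituting $x \mapsto xy^{-1}$ in the integral gives $I_m(ygy^{-1}) = I_m(g)$, using unimodularity of $G$ and of $Z_G(g)$. Second, there are only finitely many $G$-conjugacy classes of Cartan subgroups $H_1, \ldots, H_r$, and each regular element is $G$-conjugate to some $h \in H_j^{\reg}$, with $Z_G(g)$ conjugate to $Z_G(h)$. It therefore suffices to prove that $h \mapsto I_m(h)$ is continuous, hence locally bounded, on each $H_j^{\reg}$. On $H_j^{\reg}$ the connected centralizer $Z_G(h)^0 = H_j^0$ is constant, the integrand is jointly continuous in $(xZ_G(h), h)$, and a local dominating integrable function for $h$ in a neighborhood of $h_0 \in H_j^{\reg}$ is produced from standard estimates on $\Xi$ and $\sigma$ together with the convergence of \eqref{eq int conv} at $h_0$; dominated convergence then yields the required continuity.

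Given a uniform bound $M := \sup_{g \in \supp(\varphi)} I_m(g) < \infty$, the estimate
\[
\left|\int_{G^{\reg}}\varphi(g)\,\tau_g(f)\,dg\right| \leq M \cdot \nu_{0,0,m}(f) \cdot \|\varphi\|_{L^1(G)}
\]
proves convergence. Applied to differences $f - f_0$ and $\varphi - \varphi_0$, it also yields joint continuity: the Fr\'echet topology on $\cC(G)$ controls $\nu_{0,0,m}(f - f_0)$, while convergence in $C^{\infty}_c(G^{\reg})$ provides supports inside a fixed compact $C \subset G^{\reg}$ and uniform convergence of $\varphi_n$ to $\varphi$, so the same constant $M_C$ can be used throughout and $\|\varphi_n - \varphi\|_{L^1}$ tends to zero. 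The hard part will be the uniform bound on $I_m$: although the pointwise convergence at each $h$ is classical, one must carry out the dominated-convergence argument carefully, producing local integrable majorants on $H_j^{\reg}$ that remain uniform in $h$ as $x$ goes to infinity in $G/Z_G(h)$. This is where Harish-Chandra's estimates on the spherical function $\Xi$ and the distance function $\sigma$ do the real work.
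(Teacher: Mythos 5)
Your proposal is correct and follows essentially the same route as the paper: the paper's proof simply asserts that $g \mapsto \tau_g(f)$ is continuous on $G^{\reg}$ because on each connected component of $G^{\reg}$ all centralisers are conjugate to a fixed Cartan subgroup, which is exactly the structural fact (finitely many Cartan classes, conjugation invariance of the orbital integral) underlying your reduction to continuity of $I_m$ on each $H_j^{\reg}$. Your version is in fact more complete, since the dominating integral $I_m$ and the resulting uniform bound $M_C$ supply the joint continuity in $(\varphi,f)$ that the paper's one-line argument leaves implicit.
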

\begin{proof}
On each connected component of $G^{\reg}$, all centralisers are conjugate to a fixed Cartan subgroup of $G$. This implies that
the function $g \mapsto \tau_g(f)$ on $G^{\reg}$ is continuous. 
%
\end{proof}

By this lemma, we obtain a continuous trace
\[
\tau\colon \cC(G) \to \calD'(G^{\reg}),
\]
defined by 
\[
\langle \tau(f), \varphi \rangle = \int_G \varphi(g) \tau_g(f)\, dg,
\]
for  $f \in \cC(G)$ and $\varphi \in C^{\infty}_c(G)$. It induces
\[
\tau\colon  K_0(\cC(G)) \to \calD'(G^{\reg}).
\]
\begin{definition} \label{def dist index}
The \emph{distributional index} of $D$ is
\[
\tau(\ind_G(D)) \quad \in \calD'(G^{\reg}).
\]
\end{definition}
Theorem \ref{thm fixed pt} determines this distribution completely.

It is an interesting question if, for $f \in \cC(G)$, the distribution $\tau(f)$ extends to all of $G$. Then the index in Definition \ref{def dist index} would extend to an index with values in $\calD'(G)$, which is determined by Theorem \ref{thm fixed pt}. We will see in Section \ref{sec char}
that the distributional index of the Dirac operator used by Schmid \cite{Schmid76} to realise discrete series representations is the character of such a representation.


\section{Another trace}

Fix an element $g \in G$. An important tool in the proof of Theorem \ref{thm fixed pt} is a trace $\Tr_g$ defined on certain operators on $L^2(E)$.
%
%
This trace is defined in terms of Schwartz kernels of operators. It has the advantage that it can be evaluated in terms of analysis and geometry, whereas $\tau_g$ has the advantage that it can be used to compute values of characters of representations.

In this section, we use the trace $\Tr_g$ to give an expression for the number $\tau_g(\ind_G(D))$ in terms of the heat kernel associated to $D$. (See Proposition \ref{prop taug index}.) In Section \ref{sec loc}, we localise that expression to give a proof of Theorem \ref{thm fixed pt}.

We will often use the differentiable version of Abels' slice theorem, see page 2 of \cite{Abels}. This states that there is a $K$-invariant submanifold $N\subset M$, such that the action map $[g, n] \mapsto gn$, for $g \in G$ and $n \in N$, defines a $G$-equivariant diffeomorphism
\[
G\times_K N \xrightarrow{\cong} M.
\]
Here $G\times_K N$ is the quotient of $G \times N$ by the action by $K$ given by
\[
k\cdot (g,n) = (gk^{-1}, kn)
\]
for $k \in K$, $g \in G$ and $n \in N$. We fix such a submanifold $N$ from now on.

\subsection{Schwartz kernels}

The $G$-equivariant vector bundle $E \to M$ decomposes as
\[
E \cong G\times_K(E|_N) \to G\times_K N = M.
\]
Hence
\[
\Gamma^{\infty}(E) \cong \bigl(C^{\infty}(G) \hat \otimes \Gamma^{\infty}(E|_N) \bigr)^K,
\]
where $\hat \otimes$ denotes the completion of the algebraic tensor product in the Fr\'echet topology on $\Gamma^{\infty}(E)$ (which is well-defined since $\Gamma^{\infty}(\Hom(E|_N))$ is nuclear), and the superscript $K$ denotes the subspace of $K$-invariant elements.

Consider the vector bundle
\[
\Hom(E)\to M \times M
\]
with fibres
\[
\Hom(E)_{(m, m')} = \Hom(E_m, E_{m'}),
\]
for $m, m' \in M$. The bundle $\Hom(E|_N)\to N \times N$ is defined analogously.
Consider the action by $K \times K$ on the space
\[
\cC(G) \otimes \Gamma^{\infty}(\Hom(E|_N))
\]
given by
\[
(k,k')\cdot(f\otimes A) (g, n, n')= f(kgk'^{-1})k^{-1}A(kn, k'n')k',
\]
for $k, k' \in K$, $f \in \cC(G)$, $A \in \Gamma^{\infty}(\Hom(E|_N))$, $g \in G$ and $n, n' \in N$. 
We have the space
\[
\tilde \cC(E) := \bigl( \cC(G) \hat \otimes \Gamma^{\infty}(\Hom(E|_N)) \bigr)^{K\times K},
\]
where again,  $\hat \otimes$ denotes the completion of the algebraic tensor product in the tensor product Fr\'echet topology. This space is a Fr\'echet algebra with respect to convolution.

For $\tilde \kappa \in \tilde \cC(E)$, consider the operator $T_{\tilde \kappa}$ on $L^2(E)$ defined by
\[
(T_{\tilde \kappa}s)(gn) = \int_G \int_N g\tilde \kappa(g^{-1}g', n, n')g'^{-1} s(g'n')\, dn'\, dg',
\]
for $s \in L^2(E)$, $g \in G$ and $n \in N$. We will see in Lemma \ref{lem CE C*MG} that this defines a bounded operator $T_{\tilde \kappa}$ on $L^2(E)$, although at this point it is not very important to us on which space $T_{\tilde \kappa}$ acts. This operator is $G$-equivariant, and has Schwartz kernel $\kappa \in \Gamma^{\infty}(\Hom(E))^G$ given by
\beq{eq kappa tilde}
\kappa(gn, g'n') = g \tilde \kappa(g^{-1}g', n, n') g'^{-1},
\eeq
for $g, g' \in G$ and $n, n' \in N$. Given $\tilde \kappa \in \tilde \cC(E)$ we will always write $\kappa$ for the section in $\Gamma^{\infty}(\Hom(E))^G$ defined like this.

The assignment $\tilde  \kappa \mapsto T_{\tilde \kappa}$ is injective, and satisfies
\[
T_{\tilde \kappa} \circ T_{\tilde \kappa'} = T_{\tilde \kappa * \tilde \kappa'},
\]
for $\tilde \kappa, \tilde \kappa' \in \tilde \cC(E)$. 
\begin{definition}\label{def SE}
The algebra 
 $\cC(E)$ is defined as
 \[
 \cC(E) = \{T_{\tilde \kappa}; \tilde \kappa \in \tilde \cC(E)\}. 
 \]
 The topology on this algebra is the one corresponding to the topology on $\tilde \cC(E)$ via the isomorphism $\tilde \kappa \mapsto T_{\tilde \kappa}$.
\end{definition}
Note that $\cC(E)$ is a Fr\'echet algebra because $\tilde \cC(E)$ is.

It will be important to us that heat kernels corresponding to twisted $\Spinc$-Dirac operators lie in $\cC(E)$, see Lemma \ref{lem heat SE}.

\subsection{The trace $\Tr_g$}

Let $c \in C^{\infty}_c(M)$ be a cutoff function for the action by $G$ on $M$. Let $g \in G$ be semisimple. We will denote the fibre-wise trace of endomorphisms of $E$ by $\tr$. 
\begin{lemma} \label{lem Trg conv}
Let $\tilde \kappa \in \tilde \cC(E)$.
\begin{enumerate}
\item[(a)] The integral
\beq{eq Trg conv}
\int_{G/Z} \int_M c(xgx^{-1}m)\tr(xgx^{-1} \kappa(xg^{-1}x^{-1}m, m))\, dm\, d(xZ)
\eeq
converges absolutely, and depends continuously on $\tilde \kappa$. 
\item[(b)] 
Let $c_G$ be a cutoff function for the action by $Z$ on $G$ by right multiplication, and let $c^g$ be the function on $M$ defined by
\[
c^g(m) = \int_G c_G(x)c(xgm)\, dx,
\]
for $m \in M$.
The integral \eqref{eq Trg conv} equals
\beq{eq int tilde c}
\int_M c^g(m) \tr(\kappa(m, gm) g)\, dm.
\eeq
\item[(c)]
The integral \eqref{eq Trg conv} equals
\beq{eq int N GZ}
\int_N \int_{G/Z} \tr( \tilde \kappa(xgx^{-1}, n, n))\, d(xZ) \, dn,
\eeq
so in particular it is independent of the cutoff function $c$.
\end{enumerate}
\end{lemma}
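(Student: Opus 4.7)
My plan is to establish the three parts together by first carrying out the formal chain of equalities for compactly supported $\tilde\kappa$, where Fubini is automatic, and then extending everything by continuity via Theorem~\ref{thm orbital integral}.

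To relate the original integral in (a) to the formula in (b), I would substitute $m=xm_0$ in the $M$-integral. Using the $G$-invariance of $dm$ together with the equivariance identity $\kappa(xm_1,xm_2)=x\kappa(m_1,m_2)x^{-1}$ and the cyclicity of $\tr$, the integrand collapses to
\[
c(xgm_0)\tr\bigl(g\kappa(g^{-1}m_0,m_0)\bigr).
\]
This function is $Z$-right-invariant in $x$ (using $zg=gz$ for $z\in Z$, with the compensating substitution $m_0\mapsto z^{-1}m_0$ and equivariance of $\kappa$), so the orbital integral $\int_{G/Z}\cdots\,d(xZ)$ unfolds to $\int_G\cdots c_G(x)\,dx$. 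Fubini then collects the $x$-integral into $c^g(m_0)$, and the identity $\kappa(m,gm)g=g\kappa(g^{-1}m,m)$ rewrites the trace into the form asserted in (b).

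For (c), I would invoke the Abels slice $M\cong G\times_K N$ and write $m_0=yn$. Formula~\eqref{eq kappa tilde} and cyclicity give $\tr(g\kappa(g^{-1}yn,yn))=\tr(\tilde\kappa(y^{-1}gy,n,n))$, and under the measure identification $dm_0\leftrightarrow dy\,dn$ implicit in the definition of $\kappa$ from $\tilde\kappa$ the expression becomes
\[
\int_G\int_N c^g(yn)\tr\bigl(\tilde\kappa(y^{-1}gy,n,n)\bigr)\,dn\,dy.
\]
The decisive manipulation is the substitution $y'=xgy$ inside the defining integral $c^g(yn)=\int_G c_G(x)c(xgyn)\,dx$: it sends $c(xgyn)$ to $c(y'n)$ and $y^{-1}gy$ to $y'^{-1}(xgx^{-1})y'$. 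After Fubini, the $x$-integration weighted by $c_G$ becomes the orbital integral $\int_{G/Z}\tr(\tilde\kappa(y'^{-1}xgx^{-1}y',n,n))\,d(xZ)$, and a left translation $x\mapsto y'x$ on $G/Z$ (valid by $G$-invariance of $d(xZ)$) cancels the $y'^{-1}(\,\cdot\,)y'$ conjugation. The outer $y'$-integral then collapses via the cutoff identity $\int_G c(y'n)\,dy'=1$, producing exactly the formula in (c).

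Finally, I would deduce (a) from (c). For each $n\in N$ the function $h\mapsto\tilde\kappa(h,n,n)$ lies in $\cC(G)$, with Fr\'echet seminorms depending continuously on $n$ and on $\tilde\kappa$; Theorem~\ref{thm orbital integral} then furnishes absolute convergence and continuity of the orbital integral in both variables. Since $M/G$ is compact, the slice $N$ can be chosen with $N/K$ compact, and the $K\times K$-invariance of $\tilde\kappa$ makes the integrand $K$-invariant in $n$, so the outer integration takes place over a compact transversal. Continuity in $\tilde\kappa$ follows, and the chain (a)$=$(b)$=$(c), established on the dense subspace of compactly supported kernels, extends to all of $\tilde\cC(E)$ by continuity of each expression. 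The main obstacle I foresee is the careful verification that the substitutions $y\mapsto xgy$ on $G$ and $x\mapsto y'x$ on $G/Z$ interact correctly with the cutoff identities, and that the measure $dy\,dn$ on $G\times N$ matches $dm$ on $M$ under the slice diffeomorphism with the normalizations fixed in Section~\ref{sec prelim}.
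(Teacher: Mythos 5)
Your chain of substitutions is exactly the one the paper uses: the change of variables $m=xm_0$ plus equivariance of $\kappa$ and centrality of $z\in Z$ relative to $g$ to unfold the orbital integral against $c_G$, the slice decomposition $m_0=yn$ with $\tr(\kappa(yn,gyn)g)=\tr(\tilde\kappa(y^{-1}gy,n,n))$, the substitution $y'=xgy$ to turn $y^{-1}gy$ into $y'^{-1}xgx^{-1}y'$, invariance of the orbital integral under conjugation by $y'$, and finally the cutoff identity $\int_G c(y'n)\,dy'=1$ together with compactness of $N$ and Theorem~\ref{thm orbital integral}. So in substance this is the paper's proof.

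The one place where your packaging is weaker than the paper's is the logic of part (a) for general $\tilde\kappa$. You propose to establish the identities on compactly supported kernels and then extend by ``continuity of each expression''; but continuity (indeed, well-definedness) of expression (a) on all of $\tilde\cC(E)$ is precisely what part (a) asserts, so you cannot invoke it to extend the equality --- and convergence of the approximating integrals does not by itself give absolute convergence of the limiting integral. The repair is immediate and is what the paper does: run your entire substitution chain with $|\tr(\cdots)|$ in place of $\tr(\cdots)$. All steps are then instances of Tonelli's theorem for nonnegative integrands, valid without any prior convergence hypothesis, and they bound the iterated integral of the absolute value by $\vol(N)$ times $\sup_{n}\int_{G/Z}|\tr\tilde\kappa(xgx^{-1},n,n)|\,d(xZ)$, which is finite by Theorem~\ref{thm orbital integral}. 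Absolute convergence and continuity in $\tilde\kappa$ follow, genuine Fubini is then licensed, and removing the absolute values gives (b) and (c) directly for all $\tilde\kappa\in\tilde\cC(E)$, with no density argument needed.
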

\begin{proof}
If $m \in M$ and $x \in G$, and $m' = x^{-1}m$, then the equivariance property of $\kappa$ implies that
\[
\begin{split}
c(xgx^{-1}m)\tr(xgx^{-1} \kappa(xg^{-1}x^{-1}m, m) &= c(xgm')\tr(xgx^{-1} \kappa(xg^{-1}m', xm') \\
&= c(xgm')\tr( \kappa(m', gm') g).
\end{split}
\]
So
\begin{multline}\label{eq int 2}
\int_{G/Z} \int_M c(xgx^{-1}m)\tr(xgx^{-1} \kappa(xg^{-1}x^{-1}m, m))\, dm\, d(xZ)\\
 = \int_G c_G(x)\int_M  c(xgm')\tr( \kappa(m', gm') g) \, dm' \, dx. 
 \end{multline}
 Now for all $y \in G$ and $n \in N$ we have
 \beq{eq traces}
 \tr( \kappa(yn, gyn) g) = \tr( y\tilde \kappa(y^{-1}gy, n, n)y^{-1}) =  \tr( \tilde \kappa(y^{-1}gy, n, n)). 
 \eeq
 So  the right hand side of \eqref{eq int 2} equals
 \[
 \int_G c_G(x)  \int_N \int_G  c(xgyn)\tr( \tilde \kappa(y^{-1}gy, n, n)) \,  dy \, dn \, dx. 
 \]
 Substituting $y' = xgy$ for $y$, we find that this equals
 \begin{multline*}
  \int_G c_G(x) \int_N \int_G   c(y'n)\tr( \tilde \kappa(y'^{-1}xgx^{-1}y', n, n)) \,  dy' \, dn\, dx \\
  \leq   \int_G c_G(x)\int_N \int_G  c(y'n)|\tr( \tilde \kappa(y'^{-1}xgx^{-1}y', n, n))| \,  dy' \, dn\, dx
  \end{multline*}
The integrand on the right hand side is nonnegative, so by Fubini's theorem, the integral equals  
\beq{eq int 3}
  \int_N  \int_G   c(y'n) \int_G c_G(x) |\tr( \tilde \kappa(y'^{-1}xgx^{-1}y', n, n))| \, dx  \,  dy' \, dn. 
\eeq
 Since $c_G$ is a cutoff function for right multiplication by $Z$ on $G$, we have for all $y' \in G$ and $n \in N$,
 \begin{multline*}
 \int_G c_G(x)| \tr( \tilde \kappa(y'^{-1}xgx^{-1}y', n, n))| \, dx \\
 = \int_{G/Z} | \tr( \tilde \kappa(y'^{-1}xgx^{-1}y', n, n))| \, d(xZ) \\
  = \int_{G/Z} | \tr( \tilde \kappa(xgx^{-1}, n, n))| \, d(xZ)
 \end{multline*}
 which converges for all $y' \in G$ and $n \in N$ by Theorem \ref{thm orbital integral}. So \eqref{eq int 3} equals
 \begin{multline*}
\int_N \int_G   c(y'n)\, dy'  \int_{G/Z} | \tr( \tilde \kappa(xgx^{-1}, n, n))| \, d(xZ) \, dn  \\
= \int_N \int_{G/Z} | \tr( \tilde \kappa(xgx^{-1}, n, n))| \, d(xZ) \, dn \\
\leq \vol(N) \int_{G/Z} | \tr( \tilde \kappa(xgx^{-1}, n, n))| \, d(xZ).
 \end{multline*}
 We conclude that the integral \eqref{eq Trg conv} converges absolutely. It depends continuously on $\tilde \kappa$ by the same argument as in the proof of Theorem \ref{thm orbital integral}. Because the integral converges absolutely, we may switch the order of integration on the right hand side of \eqref{eq int 2} to conclude that \eqref{eq Trg conv} equals \eqref{eq int tilde c}. Furthermore, we may omit absolute value signs in the above calculations to find that \eqref{eq Trg conv} equals \eqref{eq int N GZ}.
%
%
%
\end{proof}

\begin{definition} \label{def Trg}
Let $\tilde \kappa \in \tilde \cC(E)$.
The \emph{$g$-trace} of the operator
 $T = T_{\tilde \kappa} \in \cC(E)$ is
\[
\Tr_g(T) = \int_{G/Z} \int_M c(xgx^{-1}m)\tr(xgx^{-1} \kappa(xg^{-1}x^{-1})m, m))\, dm\, d(xZ).
\]
\end{definition}


\begin{lemma} \label{lem Trg trace}
For all $S, T \in \cC(E)$, we have
\[
\Tr_g(ST) = \Tr_g(TS).
\]
\end{lemma}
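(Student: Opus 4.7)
The plan is to reduce the statement to the trace property of $\tau_g$ on $\cC(G)$ established in the unnumbered lemma before Theorem \ref{thm j iso}. Write $S = T_{\tilde\sigma}$ and $T = T_{\tilde\tau}$ with $\tilde\sigma, \tilde\tau \in \tilde\cC(E)$. First I would check by a direct computation of Schwartz kernels that $ST = T_{\tilde\sigma * \tilde\tau}$, where the convolution on $\tilde\cC(E)$ is given by
\[
(\tilde\sigma * \tilde\tau)(h, n, n'') = \int_G \int_N \tilde\sigma(h', n, n')\,\tilde\tau(h'^{-1}h, n', n'')\, dn'\, dh'.
\]

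Next, by part (c) of Lemma \ref{lem Trg conv},
\[
\Tr_g(ST) = \int_N \tau_g\bigl(h \mapsto \tr\,(\tilde\sigma * \tilde\tau)(h, n, n)\bigr)\, dn,
\]
with an analogous formula for $\Tr_g(TS)$. The key step is then to fix $n, n' \in N$ and view $p(h) := \tilde\sigma(h, n, n') \colon E_{n'} \to E_n$ and $q(h) := \tilde\tau(h, n', n) \colon E_n \to E_{n'}$ as matrix-valued functions on $G$ with scalar entries $p_{ij}, q_{ji} \in \cC(G)$. The elementary identity
\[
\tr\bigl((p*q)(h)\bigr) = \sum_{i,j}(p_{ij} * q_{ji})(h)
\]
reduces matters to scalar convolutions, so that applying the trace property of $\tau_g$ entrywise and summing yields $\tau_g(\tr(p*q)) = \tau_g(\tr(q*p))$.

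Finally, I would bring the $N$-integral over $n'$ past $\tau_g$ and swap it with the outer $N$-integral over $n$ by Fubini, obtaining
\[
\Tr_g(ST) = \int_N\int_N \tau_g(\tr(p*q))\, dn'\, dn = \int_N\int_N \tau_g(\tr(q*p))\, dn\, dn' = \Tr_g(TS).
\]
The main obstacle is not conceptual but one of bookkeeping: justifying interchanging $\tau_g$ with the $N$-integration, and swapping the two $N$-integrations. Both should follow from the absolute-convergence bounds in Lemma \ref{lem Trg conv}(a), combined with the continuity of $\tau_g$ on $\cC(G)$ from Theorem \ref{thm orbital integral}.
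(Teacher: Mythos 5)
Your proof is correct, but it takes a genuinely different route from the paper's. The paper argues directly on $M\times M$: it forms the $G$-invariant function $\mu(m,m') = \int_{G/Z}\tr\bigl(\kappa_T(m,m')\kappa_S(m',xgx^{-1}m)xgx^{-1}\bigr)\,d(xZ)$ and invokes a cutoff-swapping lemma (part (2) of Lemma 3.10 in the orbifold paper cited there) to move the cutoff function $c$ from the variable $m$ to the variable $m'$, which after Fubini interchanges $\Tr_g(TS)$ and $\Tr_g(ST)$; the symmetry is thus absorbed into a general statement about $G$-invariant functions on $M\times M$, and the trace property of $\tau_g$ on $\cC(G)$ is never used. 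You instead pass through the slice picture: you identify the kernel of a product as a convolution in $\tilde\cC(E)$, use part (c) of Lemma \ref{lem Trg conv} in the form $\Tr_g = \tau_g\circ\Tr_N$, and reduce entrywise to the already-established identity $\tau_g(f_1*f_2)=\tau_g(f_2*f_1)$ on $\cC(G)$. Your reduction is sound: the fixed-$(n,n')$ matrix entries $p_{ij}, q_{ji}$ do lie in $\cC(G)$ (evaluation at $(n,n')$ is continuous on $\Gamma^\infty(\Hom(E|_N))$ and extends across the completed tensor product), $\cC(G)$ is closed under convolution, and the identity $\tr((p*q)(h))=\sum_{i,j}(p_{ij}*q_{ji})(h)$ is elementary. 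The remaining Fubini interchanges require absolute convergence of the full quadruple integral over $N\times N\times (G/Z)\times G$ with absolute values \emph{inside} all integrals, which is slightly more than Lemma \ref{lem Trg conv}(a) literally states (that lemma bounds the orbital integral of the already-convolved kernel); but it follows by the same estimates together with continuity of convolution on $\cC(G)$, so this is indeed only bookkeeping. What each approach buys: yours makes explicit the compatibility of $\Tr_g$ with $\tau_g\circ\Tr_N$ that is exploited again in Proposition \ref{prop taug index Trg}, and recycles the trace lemma for orbital integrals; the paper's avoids all matrix-entry bookkeeping and runs parallel to the discrete/orbifold case.
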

\begin{proof}
This proof is analogous to the proof of Proposition 3.18 in \cite{Wangwang}.

Let $\tilde \kappa_S, \tilde \kappa_T \in \tilde \cC(E)$ be the kernels defining $S$ and $T$, respectively. Define the function $\mu$ on $M \times M$ by
\[
\mu(m, m') = \int_{G/Z} \tr(\kappa_T(m, m')\kappa_S(m', xgx^{-1}m)xgx^{-1})\, d(xZ)
\]
for $m, m' \in M$.
Then for all $x \in G$ and $m,m' \in M$, we have $\mu(xm, xm') = \mu(m, m')$. Part (2) of Lemma 3.10 in \cite{Wangwang} states that property implies that
\[
\int_M \int_M c(m) \mu(m, m') \, dm, dm' = \int_M \int_M c(m') \mu(m, m') \, dm, dm'. 
\]
Now, using Fubini's theorem and the definition of $\Tr_g$, one can show directly that the left hand side of the above equality equals $\Tr_g(TS)$, whereas the right hand side equals $\Tr_g(ST)$.
\end{proof}


\subsection{The coarse index}

We will first prove Theorem \ref{thm fixed pt} for twisted $\Spinc$-Dirac operators. Therefore, we now suppose that $M$ is even-dimensional,  has a $G$-equivariant $\Spinc$-structure with spinor bundle $S\to M$, and that $E = S \otimes W$, for a $G$-equivariant Hermitian vector bundle $W \to M$. We suppose $D$ is a twisted $\Spinc$-Dirac operator on $E$. 

To give an expression for $\tau_g(\ind_G(D))$, we will use an alternative definition of the analytic assembly map, in terms of the \emph{coarse index}. The coarse index takes values in the $K$-theory of the \emph{(reduced) equivariant Roe algebra} $C^*(M)^G$ of $M$, which may be realised as follows. A section $\kappa$ of $\Hom(E)$ is said to have \emph{finite propagation} if there is an $R>0$ such that for all $m, m' \in M$ with $d(m, m') \geq R$, we have $\kappa(m, m') = 0$. We will consider locally integrable sections $\kappa$ of $\Hom(E)$ for which the expression
\[
(T_{\kappa}s)(m) = \int_M \kappa(m, m')s(m')\, dm',
\]
for $s \in L^2(E)$, defines a bounded operator  $T_{\kappa}\in \cB(L^2(E))$. Then $T_{\kappa}$ is $G$-equivariant if $\kappa$ is $G$-invariant, in the sense that for all $m, m' \in M$ and $g \in G$,
\[
\kappa(gm, gm') = g\kappa(m, m')g^{-1}.
\]
\begin{definition}
The \emph{equivariant Roe algebra} $C^*(M)^G$ of $M$ is the closure in $\cB(L^2(E))$ of the algebra of $G$-equivariant, bounded operators of the form $T_{\kappa}$, where $\kappa$ is a locally integrable section of $\Hom(E)$ with finite propagation.
\end{definition}

\begin{lemma}\label{lem CE C*MG}
The algebra $\cC(E)$ is a dense subalgebra of $C^*(M)^G$.
\end{lemma}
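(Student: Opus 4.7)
The plan is to realise operators in $\cC(E)$ as norm limits of finite-propagation operators, using Theorem~\ref{thm j iso} to pass from the Harish-Chandra Schwartz topology of $\tilde\cC(E)$ to the operator-norm topology, and then to observe that the smooth, compactly supported kernels already contained in $\cC(E)$ form a dense subalgebra of $C^*(M)^G$.

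First I would exploit the slice decomposition $M\cong G\times_K N$. Since $M/G = N/K$ is compact and $K$ is compact, $N$ is compact. There is a unitary isomorphism
\[
L^2(E) \cong \bigl(L^2(G)\hat\otimes L^2(E|_N)\bigr)^K,
\]
under which $T_{\tilde\kappa}$ is the restriction to $K$-invariants of convolution on $L^2(G)$ by $\tilde\kappa$, viewed as a $\Gamma^\infty(\Hom(E|_N))$-valued function on $G$. Because $N$ is compact, smooth sections of $\Hom(E|_N)$ act as smoothing, hence compact, operators on $L^2(E|_N)$. Combined with the continuous inclusion $\cC(G)\hookrightarrow C^*_rG$ from Theorem~\ref{thm j iso}, this yields a continuous algebra map $\tilde\cC(E)\to\cB(L^2(E))$ with an estimate of the form
\[
\|T_{\tilde\kappa}\|_{\cB(L^2(E))} \leq \|\tilde\kappa\|_{C^*_rG\hat\otimes\cK(L^2(E|_N))}.
\]

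Next I would show $\cC(E)\subset C^*(M)^G$ via a cutoff in the $G$-variable. Fix $\psi\in C_c^\infty([0,\infty))$ with $\psi\equiv 1$ on $[0,1]$ and $\supp\psi\subset[0,2]$, and set $\chi_R(g):=\psi(\sigma(g)/R)$; this is automatically $K$-bi-invariant since $\sigma$ is, and has compact support in $G$ (closed $\sigma$-balls in $G$ are compact, being preimages of compact balls in the symmetric space $G/K$ under the proper map $G\to G/K$). Put $\tilde\kappa_R:=\chi_R\tilde\kappa$. The kernel $\kappa_R(gn,g'n')=g\tilde\kappa_R(g^{-1}g',n,n')g'^{-1}$ on $M\times M$ vanishes unless $g^{-1}g'\in\supp\chi_R$; since the action of $G$ is isometric and $N$ and $\supp\chi_R$ are compact, this bounds the propagation of $\kappa_R$ on $M$, so $T_{\tilde\kappa_R}\in C^*(M)^G$. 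A standard estimate on the Schwartz seminorms $\nu_{X,Y,m}$ (trading a factor $(1+\sigma(g))^{m-m'}$ against the cutoff outside $\{\sigma\leq R\}$, and using that derivatives of $\chi_R$ are $O(1/R)$) shows $\chi_R\tilde\kappa\to\tilde\kappa$ in $\tilde\cC(E)$ as $R\to\infty$. By the continuity established above, $T_{\tilde\kappa_R}\to T_{\tilde\kappa}$ in operator norm, whence $T_{\tilde\kappa}\in C^*(M)^G$.

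Finally, for density, note that $C_c^\infty(G)\hat\otimes\Gamma^\infty(\Hom(E|_N))$, averaged over $K\times K$, sits inside $\tilde\cC(E)$; so $\cC(E)$ contains every $G$-equivariant operator whose kernel is smooth, finite-propagation, and $K\times K$-invariant when viewed on $G\times N\times N$. Mollifying by a $K$-bi-invariant smooth approximate identity on $G$ lets one approximate an arbitrary $G$-equivariant bounded operator with locally integrable, finite-propagation kernel in operator norm by such smooth-kernel operators. Since finite-propagation operators generate $C^*(M)^G$ by definition, density of $\cC(E)$ in $C^*(M)^G$ follows. The main obstacle is the operator-norm continuity of the map $\tilde\kappa\mapsto T_{\tilde\kappa}$: once this bridge between the Harish-Chandra Schwartz topology and the operator-norm topology is built, both the containment and the density claims reduce to routine cutoff and mollification arguments.
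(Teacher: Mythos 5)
Your proposal is correct and follows essentially the same route as the paper: both arguments rest on the decomposition $L^2(E)\cong(L^2(G)\hat\otimes L^2(E|_N))^K$, the continuous inclusion $\cC(G)\subset C^*_rG$ from Theorem \ref{thm j iso} to control the operator norm of $T_{\tilde\kappa}$, and the common dense subalgebra of operators with kernels in $(C^\infty_c(G)\otimes\Gamma^\infty(\Hom(E|_N)))^{K\times K}$, which have finite propagation. You merely make explicit (via the cutoffs $\chi_R$ and the mollification step) the approximation arguments that the paper leaves implicit.
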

\begin{proof}
Note that
\beq{eq decomp L2E}
L^2(E) = \bigl(L^2(G) \hat \otimes L^2(E|_N) \bigr)^K
\eeq
(where the hat denotes the completion in the $L^2$-norm). The algebras $\cC(E)$ and $C^*(M)^G$ have the joint dense subalgebra $\cC_c(E)$ of operators with kernels in 
\[
(C^{\infty}_c(G) \otimes \Gamma^{\infty}(\Hom(E|_N)))^{K\times K}.
\]
If $\tilde \kappa$ lies in this space, then the corresponding operator on 
 \eqref{eq decomp L2E} acts on the factor $L^2(G)$ by (left) convolution by the factor of $\tilde \kappa$ in $C^{\infty}_c(G)$. Since the completion of the algebra of convolution operators on $L^2(G)$ by functions in $C^{\infty}_c(G)$ is $C^*_rG$, and $C^*_rG$ contains $\cC(G)$, we conclude that indeed $\cC(E) \subset C^*(M)^G$. We have also seen that this  subalgebra is dense.
\end{proof}

The coarse index of $D$, denoted by $\ind_{C^*(M)^G}(D)$, is the element of $K_0(C^*(M)^G)$ explicitly given by
\begin{multline}\label{eq coarse index}
\ind_{C^*(M)^G}(D) =\\ 
\left[ \mattwo{e^{-tD^-D^+}}{e^{-\frac{t}{2} D^-D^+} \frac{1-e^{-tD^-D^+}}{D^-D^+}D^-}{e^{-\frac{t}{2} D^+D^-} \frac{1-e^{-tD^+D^-}}{D^+D^-}D^+}{1-e^{-tD^+D^-}}\right] 
- \left[ \mattwo{0}{0}{0}{1}\right],
\end{multline}
for any $t>0$.
(See Exercise 12.7.3 in \cite{Higson00}, which can be solved as on page 356 of \cite{CM90}.)

Since $M/G$ is compact, the Roe algebra $C^*(M)^G$ is Morita equivalent to $C^*_rG$. The corresponding
 isomorphism
\beq{eq iso Roe}
K_*(C^*(M)^G) \cong K_*(C^*_rG)
\eeq
can be described as follows.
Consider
the map $\Tr_N\colon  \tilde \cC(E)\to \cC(G)$ given by
\beq{eq def TrN}
\Tr_N({\tilde \kappa})(x) = \int_{N} \tr(\tilde \kappa(x, n, n))\, dn,
\eeq
for $x \in G$. We will also write $\Tr_N(T_{\tilde \kappa}) := \Tr_N(\tilde \kappa)$.  Then for $f \in \cC(G)$ and $ \kappa_N \in \Gamma^{\infty}(\Hom(E|_N))$ such that $f \otimes  \kappa_N \in \cC(E)$, we have $\Tr_N(f\otimes \kappa_N) = f \Tr(T_{\kappa_N})$, where $\Tr$ is the operator trace, and $T_{\kappa_N}$ is  the trace class operator on $L^2(E|_N)$ with smooth kernel $\kappa_N$.

Let $p \in M_{\infty}(\cC(G))$ and $q \in M_{\infty}(\Gamma^{\infty}(\Hom(E|_N)))$ be projections such that $p \otimes q \in M_{\infty}(\tilde \cC(E))$.  Then
\[
\Tr_N(p \otimes q) = p \otimes \Tr(T_q) \quad \in M_{\infty}(\cC(G))\otimes M_{\infty}(\C),
\]
where on both sides, traces and the construction $\tilde \kappa_N \mapsto T_{\tilde \kappa_N}$ are applied entry-wise. Let $\tr$ be the matrix trace on $M_{\infty}(\C)$. Since $\tr(\Tr(T_q))$ is an integer, we obtain a projection
\beq{eq TrN pq}
(1\otimes \tr)(\Tr_N(p \otimes q) ) =\tr(\Tr(T_q)) p   \in M_{\infty}(\cC(G)).
\eeq
\begin{lemma}\label{lem iso Kthry}
The extension of the above construction to  the dense subspace of $C^*(M)^G$ on which $\Tr_N$ is well-defined induces the isomorphism \eqref{eq iso Roe}.
\end{lemma}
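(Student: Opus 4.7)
The isomorphism \eqref{eq iso Roe} is the Morita $K$-theory map: since $M/G$ is cocompact, $L^2(E)$ carries a natural structure of right Hilbert $C^*_rG$-module, and $C^*(M)^G \cong \mathcal{K}_{C^*_rG}(L^2(E))$; the induced isomorphism on $K_0$ sends $[P]$ to $[P \cdot L^2(E)^n]$. My plan is to match this map, on projections in a dense subalgebra, with the explicit prescription $(1\otimes \tr)\circ \Tr_N$.

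First I would verify that $\cC(E) \subset C^*(M)^G$ is closed under holomorphic functional calculus---this follows from Theorem \ref{thm j iso} combined with the structure $\tilde\cC(E) = (\cC(G) \hat\otimes \Gamma^\infty(\Hom(E|_N)))^{K\times K}$ and the fact that $\Gamma^\infty(\Hom(E|_N))$ (on the compact manifold $N$) is smooth in the smoothing-operator algebra---so that $K_0(\cC(E)) = K_0(C^*(M)^G)$. Every class is then represented by a projection in $M_\infty(\tilde\cC(E))$, and by linearity and continuity of $(1 \otimes \tr) \circ \Tr_N$ it suffices to verify the formula on pure-tensor projections $p \otimes q$ as in the statement. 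Using the slice decomposition $L^2(E) \cong (L^2(G) \hat\otimes L^2(E|_N))^K$, the operator $T_{p\otimes q}$ acts as left convolution by $p$ on $L^2(G)^n$ tensored with the finite-rank projection $T_q$ on $L^2(E|_N)^m$, restricted to $K$-invariants; writing $r := \tr(\Tr(T_q))$ and $V \subset L^2(E|_N)^m$ for the $r$-dimensional range of $T_q$, the range of $T_{p\otimes q}$ on $L^2(E)^{nm}$ is the $C^*_rG$-submodule $(pL^2(G)^n \hat\otimes V)^K$.

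The central step, and the main obstacle, is to identify this submodule with $r$ copies of $pL^2(G)^n$, i.e.\ with the $C^*_rG$-module defined by $rp \in M_{rn}(C^*_rG)$. The key observation is that the $K\times K$-invariance of $p \otimes q$ couples the right $K$-action on the $L^2(G)$-factor to the $K$-action on $V$, so that upon choosing a $K$-compatible orthonormal basis of $V$ and performing the corresponding change of variables that absorbs the $K$-representation into the $L^2(G)$-factor, the module $(pL^2(G)^n \hat\otimes V)^K$ splits as an orthogonal sum of $r$ copies of $pL^2(G)^n$, independently of the $K$-representation on $V$. Once this is established, the Morita image of $[p \otimes q]$ is $r[p] = [\tr(\Tr(T_q))\,p] = [(1\otimes\tr)(\Tr_N(p\otimes q))]$, matching the claim; density of $\tilde\cC(E)$ in $C^*(M)^G$ together with the holomorphic closure then propagates the identity to all of $K_0(C^*(M)^G)$.
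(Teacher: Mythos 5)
There is a genuine gap at what you yourself identify as the central step. The claimed isomorphism $(pL^2(G)^n \hat\otimes V)^K \cong r\cdot pL^2(G)^n$, justified by ``choosing a $K$-compatible orthonormal basis of $V$'' and ``absorbing the $K$-representation into the $L^2(G)$-factor'', is not a proof but a restatement of the difficulty, and the general principle it appeals to is false: the $K$-invariant part of $X\hat\otimes V$ is not $\dim(V)$ copies of $X$. For instance, with $K$ acting on $X=L^2(K)$ by the right regular representation one has $(L^2(K)\otimes V)^K\cong V$, which is not $\dim(V)\cdot L^2(K)$; more to the point, for $p=d_\pi m_\xi$ with $G=K$ compact, $pL^2(G)\cong V_\pi^*$ as a right-$K$-representation, so $(pL^2(G)\otimes V)^K\cong \Hom_K(V_\pi,V)$ has dimension equal to a multiplicity, not $r\dim V_\pi$. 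Whatever makes the lemma true must exploit the specific coupling imposed by the $K\times K$-invariance of $p\otimes q$, and your sketch gives no handle on that. A secondary but related imprecision feeds this: the Hilbert $C^*_rG$-module implementing the Morita equivalence is not $L^2(E)$ but the completion $\cE$ of $\Gamma_c(E)$ in the $C^*_rG$-valued inner product (one has $L^2(E)\cong\cE\otimes_{C^*_rG}L^2(G)$ and $C^*(M)^G\cong\cK(\cE)$), so computing ranges of projections inside $L^2(E)$ computes the wrong object.

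The paper's proof avoids the direct decomposition entirely. It embeds $\cE$ into the \emph{untwisted} module $C^*_rG\otimes L^2(E|_N)$ (i.e.\ forgets the $K$-invariance), obtaining an injection $\cK(\cE)\hookrightarrow C^*_rG\otimes\cK(L^2(E|_N))$ under which $T_{p\otimes q}\mapsto p\otimes T_q$; it then uses the standard fact that the stability isomorphism $K_*(A\otimes\cK(H))\cong K_*(A)$ is implemented by the operator trace on the $\cK(H)$-factor, which visibly sends $[p\otimes T_q]$ to $\tr(\Tr(T_q))[p]$, and checks that the composite agrees with the Morita isomorphism \eqref{eq iso Roe}. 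If you want to keep your route, the missing content is precisely a proof that $[P\cE]=r[p]$ in $K_0(C^*_rG)$; passing through the ambient algebra $C^*_rG\otimes\cK(L^2(E|_N))$ is the efficient way to get it, and it also makes the holomorphic-closure discussion for $\cC(E)$ unnecessary.
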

\begin{proof}
Let the Hilbert $C^*_rG$-module  $\cE$ be the completion of $\Gamma_c(E)$ in the $C^*_rG$-valued inner product given by
\[
(s, s')_{C^*_rG}(x) = (s, x\cdot s')_{L^2(E)},
\]
for $s, s' \in \Gamma_c(E)$ and $x \in G$. Then 
\[
L^2(E) \cong \cE \otimes_{C^*_rG} L^2(G),
\]
and the map $T \mapsto T \otimes 1$ defines an isomorphism $\calL(\cE) \xrightarrow{\cong} \cB(L^2(E))$. (Here $\calL(\cE)$ is the algebra of adjointable operators on $\cE$.) See Lemma 2.2 in \cite{Roe02}. This isomorphism restricts to an isomorphism
\[
\cK(\cE) \xrightarrow{\cong} C^*(M)^G,
\]
see Lemma 2.3 in \cite{Roe02}. Now $\cK(\cE)$ is Morita equivalent to $C^*_rG$, and this is how the isomorphism \eqref{eq iso Roe} comes about.

The isomorphism
\beq{eq iso E C*G}
K_*(\cK(\cE)) \xrightarrow{\cong} K_*(C^*_rG)
\eeq
induced by Morita equivalence is induced by an operator trace, analogously to the isomorphism $K_*(\cK(L^2(E))) \cong \Z$ when $G$ is trivial.
Let us make this more explicit. The isomorphism
\[
\Gamma_c(E) \cong (C_c(G) \otimes \Gamma(E|_N))^K
\]
extends continuously to an embedding
\[
\cE \hookrightarrow C^*_rG \otimes L^2(E|_N).
\]
This induces an injective $*$-homomorphism
\beq{eq incl cE}
\cK(\cE) \hookrightarrow C^*_rG \otimes \cK(L^2(E|_N)).
\eeq
The isomorphism
\beq{eq iso tr}
K_*(C^*_rG \otimes \cK(L^2(E|_N))) \xrightarrow{\cong} K_*(C^*_rG)
\eeq
induced by Morita equivalence is induced by the operator trace on $\calL^1(L^2(E|_N)) \subset \cK(L^2(E|_N))$, applied to projections as above the lemma. (Here $\calL^1(L^2(E|_N))$ is the algebra of trace-class operators on $L^2(E|_N)$.) We have seen that this operator trace corresponds to the map $\Tr_N$ on kernels. The isomorphism \eqref{eq iso E C*G}
is the composition of the map induced by the embedding \eqref{eq incl cE} and the isomorphism \eqref{eq iso tr}. So the claim follows.
\end{proof}

 

Roe showed in \cite{Roe02} that the isomorphism \eqref{eq iso Roe} maps the coarse index to the analytic assembly map. The reason why we use this description of the analytic assembly map is that it does not require us to use properly supported operators. Since the heat operators $e^{-tD^-D^+}$ and $e^{-tD^+D^-}$ are not properly supported, using the more standard definition of the assembly map as in \cite{Connes94} would lead to technical issues.

\subsection{Computing $\tau_g(\ind_G(D))$}

\begin{lemma} \label{lem heat SE}
For all $t>0$, the heat operators $e^{-tD^-D^+}$ and $e^{-tD^+D^-}$ lie in $\cC(E)$.
\end{lemma}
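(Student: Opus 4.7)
The plan is to exhibit the Schwartz kernel of each heat operator through the slice isomorphism $M \cong G\times_K N$ and verify the three conditions defining $\tilde \cC(E)$: joint smoothness on $G\times N\times N$, $K\times K$-equivariance, and Harish-Chandra Schwartz decay in the $G$-variable (uniformly on compact subsets of $N\times N$, so the resulting section lies in the completed tensor product).

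First I would invoke standard Gaussian off-diagonal bounds for the heat kernels $\kappa_t^\pm$ of $e^{-tD^\mp D^\pm}$. Since $G$ acts properly, isometrically, and cocompactly, $M$ has bounded geometry, and $D$ is a uniformly elliptic first-order $G$-invariant operator whose coefficients are bounded (with all derivatives) in local orthonormal frames drawn from a compact family over a fundamental domain. The kernels $\kappa_t^\pm \in \Gamma^\infty(\Hom(E))$ are therefore smooth, $G$-invariant, and satisfy, for every $k \in \N$ and every $t>0$, estimates of the form
$$\|\nabla^k \kappa_t^\pm(m, m')\| \leq C_{k,t}\, e^{-d(m,m')^2/(5t)},$$
uniformly in $(m,m') \in M\times M$ (e.g.\ Cheeger--Gromov--Taylor bounds, or their adaptation to Dirac-type operators). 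Formula \eqref{eq kappa tilde} then gives $\tilde \kappa_t^\pm(x, n, n') = \kappa_t^\pm(n, xn')\,x$, which is jointly smooth in $(x, n, n')$, and $K\times K$-equivariance follows from $G$-equivariance of $\kappa_t^\pm$ and $K$-invariance of $N$.

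The remaining point is the Schwartz decay in $x$. Left- and right-invariant differential operators on $G$ correspond under the slice identification to specific $G$-equivariant differential operators acting on the first argument of $\kappa_t^\pm$, so $L(X)R(Y)\tilde \kappa_t^\pm$ satisfies a Gaussian bound of the same shape in $d(n, xn')$. Using the $G$-equivariant submersion $p\colon G\times_K N \to G/K$, $[g,n] \mapsto gK$, and comparing the $G$-invariant metric on $M$ with the metric on $G/K$ determined by the fixed inner product on $\kg$, one obtains for every compact $L \subset N$ constants $c_L, C_L > 0$ such that
$$d(n, xn') \geq c_L\, \sigma(x) - C_L, \qquad n,n' \in L,\ x \in G.$$
Combined with the lower bound $\Xi(x)^{-1} \leq C_0 (1+\sigma(x))^{N_0}\, e^{c_0\sigma(x)}$ for Harish-Chandra's spherical function, Gaussian decay in $\sigma(x)$ dominates $(1+\sigma(x))^m \Xi(x)^{-1}$, so each seminorm $\nu_{X,Y,m}$ applied to $\tilde \kappa_t^\pm(\cdot, n, n')$ is finite, with bounds that are locally uniform in $(n,n')$ (and which also control derivatives in the $N$-variables, by differentiating under the same Gaussian estimate). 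Since $\cC(G)$ and $\Gamma^\infty(\Hom(E|_N))$ are nuclear Fr\'echet spaces, this places $\tilde \kappa_t^\pm$ in $\bigl(\cC(G)\hat\otimes \Gamma^\infty(\Hom(E|_N))\bigr)^{K\times K} = \tilde \cC(E)$, so $e^{-tD^\mp D^\pm} = T_{\tilde \kappa_t^\pm} \in \cC(E)$. The delicate step is the distance comparison together with the locally uniform control in $(n,n')$; everything else is bookkeeping once the Gaussian heat-kernel estimates on manifolds of bounded geometry are in hand.
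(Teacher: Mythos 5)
Your proposal is correct in outline, but it takes a genuinely different route from the paper. The paper proves Lemma \ref{lem heat SE} by exploiting the product structure of the Dirac operator in the slice metric $B_{\kp}$: by Lemma \ref{lem decomp heat} the heat kernel factorises as $\kappa_t(xn,x'n')=\kappa_t^{G,K}(x,x')\otimes x\kappa_t^N(n,n')x'^{-1}$, the $N$-factor is a smooth kernel on a compact manifold, and the $G$-factor lies in $\cC(G)\otimes\End(S_{\kp})$ by Barbasch--Moscovici (Proposition 2.4 in \cite{BM}); the whole proof is two lines once that decomposition is in place. You instead work directly on $M$: Cheeger--Gromov--Taylor-type Gaussian off-diagonal bounds on a manifold of bounded geometry, the distance comparison $d(n,xn')\gtrsim\sigma(x)$ coming from the Riemannian submersion $G\times_KN\to G/K$, and the exponential upper bound on $\Xi^{-1}$, so that Gaussian decay swallows every Harish-Chandra seminorm. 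What the paper's route buys is brevity and the outsourcing of all hard analysis to \cite{BM} (at the price of first replacing the metric by $B_{\kp}$ so that $D^2$ splits); what your route buys is independence from that splitting and from the specific slice metric, at the price of importing the full strength of off-diagonal heat-kernel estimates with derivatives on noncompact manifolds. Two small points to tighten: the $x$-dependence sits in the \emph{second} argument of $\kappa_t$ (since $\tilde\kappa_t(x,n,n')=\kappa_t(n,xn')x$), and when you apply $L(X)R(Y)$ the induced Killing fields on $M$ (and the derivative of the fibrewise action $x\colon E_{n'}\to E_{xn'}$) can grow exponentially in $d(n,xn')$, so the differentiated kernel satisfies a Gaussian bound only up to an extra factor $e^{C d(n,xn')}$ --- harmless, since the Gaussian still dominates, but worth saying rather than claiming a bound ``of the same shape''. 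Neither point is a gap.
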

This fact will be proved in Subsection \ref{sec decomp kt}. Let $g \in G$ be semisimple. Then Lemma \ref{lem heat SE} implies that the traces $\Tr_g(e^{-tD^-D^+})$ and $\Tr_g(e^{-tD^+D^-})$ are well-defined.
\begin{proposition}\label{prop taug index Trg}
For all $t>0$, we have
\[
\tau_g(\ind_G(D)) = \Tr_g(e^{-tD^-D^+}) - \Tr_g(e^{-tD^+D^-}).
\]
\end{proposition}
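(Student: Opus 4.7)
The plan is to reduce the identity to the bookkeeping statement that, on $\cC(E)$, the trace $\Tr_g$ factors as $\tau_g\circ \Tr_N$, and then apply this to the explicit heat kernel representative of the coarse index in \eqref{eq coarse index}.

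First I would establish the identity
\[
\Tr_g(T) = \tau_g(\Tr_N(T))
\]
for every $T = T_{\tilde\kappa} \in \cC(E)$. This is immediate from part (c) of Lemma \ref{lem Trg conv}: by absolute convergence, we may apply Fubini's theorem to \eqref{eq int N GZ} to obtain
\[
\Tr_g(T_{\tilde\kappa}) = \int_{G/Z} \Bigl(\int_N \tr(\tilde\kappa(xgx^{-1},n,n))\, dn\Bigr)\, d(xZ) = \int_{G/Z} \Tr_N(\tilde\kappa)(xgx^{-1})\, d(xZ),
\]
which by the definition of $\tau_g$ equals $\tau_g(\Tr_N(\tilde\kappa))$. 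Extending diagonally, the same formula holds on $M_n(\cC(E))$ in the sense that $\Tr_g^{(n)}(a_{ij}) = \sum_i \Tr_g(a_{ii}) = \tau_g^{(n)}(\Tr_N(a_{ij}))$.

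Next I would invoke Lemma \ref{lem iso Kthry} and the fact (due to Roe \cite{Roe02}) that the coarse index $\ind_{C^*(M)^G}(D) \in K_0(C^*(M)^G)$ is sent to $\ind_G(D) \in K_0(C^*_r G) \cong K_0(\cC(G))$ by the Morita equivalence isomorphism \eqref{eq iso Roe}, which at the level of kernels is entry-wise $\Tr_N$. Combined with the pointwise identity above, this says
\[
\tau_g(\ind_G(D)) = \tau_g\bigl(\Tr_N\text{-image of }\ind_{C^*(M)^G}(D)\bigr) = \Tr_g^{(2)}(p_t - q_t),
\]
where $p_t$ and $q_t$ are the two projections in the unitization of $M_2(C^*(M)^G)$ in \eqref{eq coarse index}.

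Finally I would plug in the representative: by Lemma \ref{lem heat SE} the diagonal entries of $p_t - q_t$ are $e^{-tD^-D^+}$ and $-e^{-tD^+D^-}$, both in $\cC(E)$, so the standard extension of a trace to matrices gives
\[
\Tr_g^{(2)}(p_t - q_t) = \Tr_g(e^{-tD^-D^+}) - \Tr_g(e^{-tD^+D^-}),
\]
as desired. The off-diagonal entries of $p_t$ contribute nothing, and the unit entries in the bottom-right corners of $p_t$ and $q_t$ cancel when forming the difference.

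The main obstacle is the careful handling of the unitization: $1 \in \cB(L^2(E))$ does not lie in $\cC(E)$, and $\Tr_g$ is not defined on it. One has to check that $p_t - q_t \in M_2(\cC(E))$ (which reduces, via functional calculus and Lemma \ref{lem heat SE}, to the fact that the off-diagonal entries are of the form $D^\pm \cdot f(D^\mp D^\pm)$ for smooth rapidly decaying $f$, and that $1-(1-e^{-tD^+D^-}) = e^{-tD^+D^-}\in \cC(E)$), so that $\Tr_g^{(2)}(p_t - q_t)$ is a legitimate number independent of the cutoff function $c$ by Lemma \ref{lem Trg conv}(c). Once this is in place, the computation is purely formal.
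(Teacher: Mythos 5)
Your proposal is correct and follows essentially the same route as the paper: apply the entry-wise $\Tr_N$ realisation of the Morita isomorphism (Lemma \ref{lem iso Kthry}) to the explicit heat-kernel representative \eqref{eq coarse index} of the coarse index, use that the trace vanishes on (equivalently, that the units cancel in) the unitisation, and conclude via the factorisation $\Tr_g = \tau_g \circ \Tr_N$ from Lemma \ref{lem Trg conv}(c). The only cosmetic difference is that the paper treats the $2\times 2$ matrices as single operators on the graded bundle $E$ so that the diagonal sum is absorbed into the fibrewise trace in \eqref{eq def TrN}, whereas you keep the matrix bookkeeping explicit; both handle the unit and the off-diagonal terms in the same way.
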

\begin{proof}
The map $\Tr_N$ was implicitly defined to map the unit $1$ added to $C^*(M)^G$ to the unit $1$ added to $C^*_rG$. The matrix elements in \eqref{eq coarse index}, apart from the terms $1$, are smooth kernel operators. Therefore, they lie in the domain of $\Tr_N$. 
So, by Lemma \ref{lem iso Kthry},
\begin{multline*}
\ind_G(D) = 
\Tr_N(\ind_{C^*(M)^G}(D) )=\\ 
[\Tr_N(e^{-tD^-D^+}) + 1- \Tr_N(e^{-tD^+D^-})] - [1]
\\\in K_0(C^*_rG).
\end{multline*}
Note that the $2\times 2$ matrices appearing in \eqref{eq coarse index} are in fact single bounded operators on $L^2(E)$, i.e. $1\times 1$ matrices over $\cB(L^2(E))$; they only appear as $2\times 2$ matrices because of the grading of $E$. Therefore, the map $1\otimes \tr$ in \eqref{eq TrN pq} does not appear in this case. There is a sum over the diagonals of these $2\times 2$ matrices because of the fibrewise trace of endomorphisms in \eqref{eq def TrN}.

Now by Lemma \ref{lem heat SE}, we have  $\Tr_N(e^{-tD^-D^+}) \in \cC(G)$ and $\Tr_N(e^{-tD^+D^-}) \in \cC(G)$. Since the extension of a trace to the unitisation of an algebra is by definition equal to zero on the added unit, we obtain
\[
\tau_g (\ind_G(D)) = \tau_g \circ \Tr_N (e^{-tD^-D^+}) - \tau_g \circ \Tr_N (e^{-tD^+D^-}).
\]
Part (c) of Lemma \ref{lem Trg conv} states that $\tau_g \circ \Tr_N = \Tr_g$, so the claim follows.
\end{proof}

Let $\Str$ be the fibre-wise supertrace on endomorphisms of the $\Z_2$-graded vector bundle $E$. Combining part (b) of Lemma \ref{lem Trg conv} with  Proposition \ref{prop taug index Trg}, we reach the main conclusion of this section.
\begin{proposition}\label{prop taug index}
Let $\kappa_t$ be the Schwartz kernel of $e^{-tD^2}$. Let $c^g$ be as in part (b) of Lemma \ref{lem Trg conv}. 
Then for all $t > 0$,
\[
\tau_g(\ind_G(D)) =  \int_M c^g(m) \Str(\kappa_t(m, gm) g)\, dm.
\]
\end{proposition}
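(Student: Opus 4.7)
The plan is to combine the two results named in the statement in a completely mechanical way; the conceptual content has already been established. First I would invoke Proposition \ref{prop taug index Trg} to rewrite
\[
\tau_g(\ind_G(D)) = \Tr_g(e^{-tD^-D^+}) - \Tr_g(e^{-tD^+D^-}).
\]
Both heat operators lie in $\cC(E)$ by Lemma \ref{lem heat SE}, so the $g$-traces on the right are well-defined.

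Next I would apply part (b) of Lemma \ref{lem Trg conv} to each of these two terms separately. That lemma expresses $\Tr_g(T_{\tilde \kappa})$ as $\int_M c^g(m)\tr(\kappa(m,gm)g)\,dm$, where $\kappa$ is the Schwartz kernel of $T_{\tilde \kappa}$ and $\tr$ is the fibrewise trace. Since $D$ is odd with respect to the $\Z_2$-grading on $E$, the operator $D^2$ preserves the grading and decomposes as $D^-D^+$ on $\Gamma^\infty(E^+)$ and $D^+D^-$ on $\Gamma^\infty(E^-)$. Consequently the Schwartz kernel $\kappa_t$ of $e^{-tD^2}$ restricts on the diagonal block of $\Hom(E^+)$ to the kernel of $e^{-tD^-D^+}$ and on the diagonal block of $\Hom(E^-)$ to the kernel of $e^{-tD^+D^-}$, while its off-diagonal blocks vanish. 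Therefore
\[
\tr\bigl(\kappa_t^{++}(m,gm)g\bigr) - \tr\bigl(\kappa_t^{--}(m,gm)g\bigr) = \Str\bigl(\kappa_t(m,gm)g\bigr),
\]
by the very definition of the fibrewise supertrace on the $\Z_2$-graded bundle $E$.

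Combining these observations gives
\[
\tau_g(\ind_G(D)) = \int_M c^g(m)\Str\bigl(\kappa_t(m,gm)g\bigr)\,dm,
\]
which is the desired identity. No step is really an obstacle here: all the hard work (proving that $\Tr_N$ intertwines the coarse and analytic pictures, establishing the convergence of the orbital integral expression for $\Tr_g$, and showing that heat kernels belong to $\cC(E)$) has been done in the preceding lemmas and in Lemma \ref{lem heat SE} to come; the only thing that might warrant a brief comment is the identification of the two diagonal blocks of $\kappa_t$ with the kernels of $e^{-tD^\mp D^\pm}$, which follows from the functional calculus applied to the odd self-adjoint operator $D$.
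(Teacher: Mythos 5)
Your proposal is correct and matches the paper's argument exactly: the paper obtains this proposition by combining Proposition \ref{prop taug index Trg} with part (b) of Lemma \ref{lem Trg conv}, noting that the difference of the fibrewise traces on the even and odd diagonal blocks of $\kappa_t$ is precisely the supertrace. You have simply spelled out the block decomposition more explicitly than the paper does.
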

This result involves a supertrace because of the difference on the right hand side of the expression in Proposition \ref{prop taug index Trg}: the operators $e^{-tD^-D^+}$ and $e^{-tD^+D^-}$ act on even and odd degree sections of $E$, respectively.

\section{Localisation}\label{sec loc}

In this section, we use heat kernel localisation techniques to prove Theorem \ref{thm fixed pt}. The central step is an estimate for heat kernels, Proposition \ref{prop est kM}. This implies the fixed point formula for twisted $\Spinc$-Dirac operators, which generalises to elliptic operators in the usual way.

As in the previous section, we suppose that $D$ is a twisted $\Spinc$-Dirac operator, except where stated otherwise (in the proof of Theorem \ref{thm fixed pt} in Subsection \ref{sec proof}).

\subsection{Decomposing heat kernels} \label{sec decomp kt}

Suppose that $G/K$ is even-dimensional.
Let $\kg = \kk \oplus \kp$ be a Cartan decomposition. We may assume that the adjoint representation $\Ad\colon K\to \SO(\kp)$ lifts to $\widetilde{\Ad}\colon K \to \Spin(\kp)$. Indeed, this is true for a double cover $\tilde G$ of $G$, whose maximal compact subgroup $\tilde K$ is a double cover of $K$. 
Then $\tilde G$ acts properly on $M$ via the covering map $\tilde G \to G$, so that
\[
M = \tilde G \times_{\tilde K} \tilde N = G\times_K N,
\]
for $K$-invariant submanifolds $\tilde N, N \subset M$. So if necessary, we replace $G$ by $\tilde G$, so that the map $\widetilde{\Ad}$ exists.
Let $S_{\kp}$ be the standard representation of $\Spin(\kp)$. It is $\Z_2$-graded since $\kp$ is even-dimensional. We view it as a representation of $K$ via the map
\[
K \xrightarrow{\widetilde{\Ad}} \Spin(\kp) \to \End(S_{\kp}).
\]

By Proposition 3.10 in \cite{HM14}, the slice $N$ has a $K$-equivariant $\Spinc$-structure with spinor bundle $S_N \to N$ such that
\[
S|_N = S_N \otimes S_{\kp}.
\]
So
\beq{eq decomp SW}
\Gamma^{\infty}(S \otimes W) \cong \bigl(C^{\infty}(G) \otimes S_{\kp} \otimes \Gamma^{\infty}(S_N \otimes W|_N) \bigr)^K.
\eeq
The $K$-invariant inner product on $\kg$ chosen earlier, together with the restriction of the Riemannian metric on $M$ to $TN$, defines a $K$-invariant metric on $TM|_N = TN \oplus (N \times \kp)$. We denote the extension of this metric to a $G$-invariant Riemannian metric on $M$ by $B_{\kp}$. By Lemma 3.12 in \cite{HSII}, the fact that $M$ is complete in the original Riemannian metric implies that it is complete with respect to $B_{\kp}$. Furthermore, the equivariant indices of the Dirac operators corresponding to the two metrics are the same. Indeed, the $K$-homology classes defined by these operators are the same, see Proposition 11.2.7 in \cite{Higson00}. 
Therefore, there is no loss of generality in working with $B_{\kp}$. 

Let $\{X_1, \ldots, X_l\}$ be an orthonormal basis of $\kp$ with respect to the chosen inner product. Let $D_{G,K}$ be the operator on $C^{\infty}(G) \otimes S_{\kp}$ defined as
\[
D_{G,K} := \sum_{j=1}^l L(X_{j}) \otimes c_{\kp}(X_j),
\]
where in the second factor, $c_{\kp} \colon \kp \to \End(S_{\kp})$ is the Clifford action with respect to the given inner product. Let $\varepsilon$ be the grading operator on $S_{\kp}$. 
By Proposition 3.1 in \cite{HSIII}, there is a $\Spinc$-Dirac operator $D_N$ on $\Gamma^{\infty}(S_N)$ such that $D$ is the operator
\beq{eq dec Dirac}
D_{G,K} \otimes 1 + \varepsilon \otimes D_N
\eeq
on 
 \eqref{eq decomp SW}.
Here we use the metric $B_{\kp}$.

Let 
\[
\begin{split}
\kappa_t &\in \Gamma^{\infty}(M\times M, \Hom(S\otimes W));\\
\kappa^{G,K}_t & \in C^{\infty}(G\times G) \otimes \End(S_{\kp});\\
\kappa^{N}_t & \in \Gamma^{\infty}(N\times N, \Hom(S_N \otimes W|_N)).
\end{split}
\]
be the Schwartz kernels of the operators $e^{-tD^2}$, $e^{-t D_{G,K}^2}$ and $e^{-t D_N^2}$, respectively. 

\begin{lemma}\label{lem decomp heat}  For all $x, x' \in G$ and $n, n' \in N$, we have
\begin{multline*}
\kappa_t(xn, x'n') = \kappa^{G,K}_t(x, x')\otimes x\kappa^N_t(n, n')x'^{-1} \\
 \in 
 \Hom((S\otimes W)_{x'n'}, (S\otimes W)_{xn}).
\end{multline*}
\end{lemma}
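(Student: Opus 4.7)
The plan is to combine the decomposition of the Dirac operator in \eqref{eq dec Dirac} with a simple anticommutation argument, then pass from the heat kernel on $G \times N$ to the heat kernel on $M = G \times_K N$ using the associated bundle identification of $E$.

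First, I would verify that the two summands in the decomposition $D = D_{G,K} \otimes 1 + \varepsilon \otimes D_N$ of \eqref{eq dec Dirac} anticommute. The operator $D_{G,K} \otimes 1 = \sum_j L(X_j) \otimes c_\kp(X_j) \otimes 1$ acts trivially on the $\Gamma^\infty(S_N \otimes W|_N)$ factor, while $\varepsilon \otimes D_N$ acts trivially on the $C^\infty(G)$ factor. So their (anti)commutation on the mixed factor $S_\kp$ is all that matters. Since $\varepsilon$ is the grading of $S_\kp$ and each $c_\kp(X_j)$ is an odd endomorphism of $S_\kp$, we have $c_\kp(X_j)\varepsilon = -\varepsilon c_\kp(X_j)$, and hence
\[
(D_{G,K}\otimes 1)(\varepsilon \otimes D_N) = -(\varepsilon \otimes D_N)(D_{G,K}\otimes 1).
\]
Using $\varepsilon^2 = 1$, this immediately gives $D^2 = D_{G,K}^2 \otimes 1 + 1 \otimes D_N^2$, a sum of two commuting operators acting on disjoint tensor factors.

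Second, the product formula for commuting operators on a tensor product yields
\[
e^{-tD^2} = e^{-tD_{G,K}^2} \otimes e^{-tD_N^2},
\]
as operators on $C^\infty(G)\otimes S_\kp \otimes \Gamma^\infty(S_N \otimes W|_N)$. The corresponding Schwartz kernel at a pair $((x,n),(x',n')) \in (G\times N)^2$ is the tensor product $\kappa^{G,K}_t(x,x')\otimes \kappa^N_t(n,n')$. I would justify this either by uniqueness of the fundamental solution of the heat equation for $D^2$ on the complete manifold (using that $M$, and hence $G\times N$ via Abels' slice decomposition with the metric $B_\kp$, is complete by Lemma 3.12 in \cite{HSII}), or by computing that this tensor product kernel solves the heat equation on $G \times N$ with the required initial condition, and then invoking uniqueness.

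Third, I would translate this kernel formula from $G \times N$ to $M = G\times_K N$. Under the $G$-equivariant isomorphism $E \cong G \times_K (S_\kp \otimes S_N \otimes W|_N)$, the fibre at a point $xn \in M$ is identified with $(S_\kp \otimes S_N \otimes W|_N)_n$ via the action of $x$. Transporting the kernel of $e^{-tD^2}$ through this identification, a hom from $(S \otimes W)_{x'n'}$ to $(S \otimes W)_{xn}$ arises by first pulling back by $x'^{-1}$ to the fibre at $n'$, applying the tensor-product kernel $\kappa^{G,K}_t(x,x') \otimes \kappa^N_t(n,n')$, and then pushing forward by $x$ to the fibre at $xn$. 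Since $\kappa^{G,K}_t(x,x')$ lives in $\End(S_\kp)$, where the $G$-action is absent (only $K$ acts, via $\widetilde{\Ad}$), the $x$ and $x'^{-1}$ only conjugate the $\kappa^N_t$ factor. This produces exactly $\kappa^{G,K}_t(x,x') \otimes x\kappa^N_t(n,n')x'^{-1}$, as stated.

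The main obstacle is bookkeeping: verifying that the displayed formula is independent of the representative $(x,n)$ chosen for the point $xn \in M$, i.e.\ that both sides transform consistently under the replacement $(x,n) \mapsto (xk^{-1}, kn)$ for $k \in K$. This consistency follows from the $K$-equivariance of $\kappa^N_t$ (coming from $K$-equivariance of $D_N$ on the slice $N$), together with the $K$-equivariance of $\kappa^{G,K}_t$ under the combined action of right translation on $G$ and the action of $\widetilde{\Ad}(K)$ on $S_\kp$ (coming from the $K$-equivariance of $D_{G,K}$, in turn a consequence of invariance of the $X_j$-sum under $\Ad(K)$ combined with the $\widetilde{\Ad}$-intertwining of the Clifford action). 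Once these equivariance checks are in place, the formula descends unambiguously from $G \times N$ to $M$.
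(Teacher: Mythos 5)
Your proposal is correct and follows essentially the same route as the paper: the anticommutation of $D_{G,K}\otimes 1$ and $\varepsilon\otimes D_N$ gives $D^2 = D_{G,K}^2\otimes 1 + 1\otimes D_N^2$, hence $e^{-tD^2} = e^{-tD_{G,K}^2}\otimes e^{-tD_N^2}$, and the kernel formula is read off from this factorisation. The only cosmetic difference is in the last step: the paper extracts the kernel identity by applying both operators to $K$-invariant test sections $\varphi\otimes\psi$ and using the compatibility of the Riemannian density on $M$ with the product measure on $G\times N$ (plus Fubini), whereas you work with the product kernel on $G\times N$ and descend to $G\times_K N$ via the equivariance checks — both are valid ways to finish.
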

\begin{proof}
The presence of the grading operator in \eqref{eq dec Dirac} means that
\[
D^2 = D_{G,K}^2 \otimes 1 + 1 \otimes D_N^2.
\]
Since the two terms on the right hand side commute, we therefore have
\beq{eq decomp heat}
e^{-tD^2} = e^{-t(D_{G,K}^2 \otimes 1 + 1 \otimes D_N^2)} = e^{-tD_{G,K}^2} \otimes e^{-tD_N^2}.
\eeq

Lemma 4.1 in \cite{HSIII} states that the Riemanian density $dm$ on $M$ equals the measure $d[x, n]$ on $G\times_K N$ induced by the product measure $dx \, dn$ on $G\times N$. Since $K$ has unit volume, this implies that for all
$\varphi \in C^{\infty}(G) \otimes S_{\kp} \cap L^2(G) \otimes S_{\kp}$ and $\psi \in \Gamma^{\infty}(S_N \otimes W|_N)$ such that
$\varphi \otimes \psi$ is $K$-invariant, 
and $x \in G$ and $n \in N$,
\[
\begin{split}(e^{-tD^2} \varphi \otimes \psi)(xn) &= 
\int_M \kappa_t(xn, m') (\varphi\otimes \psi)(m') dm' \\
&=
  \int_N \int_{G} \kappa_t(xn, x'n') \varphi(x')\otimes x' \psi(n') \, dx'\, dn' 
\end{split}
\]
By \eqref{eq decomp heat}, this equals
\[
\begin{split}(e^{-tD_{G,K}^2}\varphi \otimes e^{-tD_N^2}\psi)(xn) &= 
\int_G \kappa^{G,K}_t(x, x')\varphi(x') \, dx' \otimes \int_Nx \kappa^N_t(n, n') \psi(n')\, dn'.
\end{split}
\]
Since $\varphi \in L^2(G) \otimes S_{\kp}$ and $N$ is compact, all integrals converge absolutely. So the claim follows from Fubini's theorem.
\end{proof}

\begin{proofof}{Lemma \ref{lem heat SE}}
Let $\tilde \kappa^{G,K}_t \in C^{\infty}(G) \otimes \End(S_{\kp})$ be given by
\[
\tilde \kappa^{G,K}_t(x) = x\kappa^{G,K}_t(e,x)x^{-1}.
\]
By Proposition 2.4 in \cite{BM}, this function lies in $\cC(G) \otimes \End(S_{\kp})$. So the claim follows by 
 Lemma \ref{lem decomp heat}.
\end{proofof}

\subsection{Finite Gaussian orbital integrals}

Consider the Gaussian function $\psi$ on $G$ defined in \eqref{eq def psi}.
 \begin{definition}\label{def FGOI}
 An element $g\in G$ has \emph{finite Gaussian orbital integral}, or \emph{FGOI}, if the integral
 \[
 \int_{G/Z} \psi(xgx^{-1})\, d(xZ)
 \]
 converges. Here, as before, $Z = Z_G(g)$.
 \end{definition}
 One immediately sees that $e\in G$ has FGOI. More generally, any element $g\in G$ such that $G/Z_G(g)$ is compact has FGOI. But actually, having FGOI is a generic property.
\begin{proposition} \label{prop FGOI ae}
Almost every element of $G$ has FGOI.
\end{proposition}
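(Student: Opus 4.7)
The strategy is to combine two ingredients: (i) the fact that $\psi \in L^1(G)$, stated as Lemma \ref{lem varphi L1}, and (ii) the Weyl integration formula for the connected semisimple group $G$. The plan is to deduce finiteness almost everywhere on each Cartan subgroup from the finite total integral of $\psi$, and then transport this to almost everywhere on $G$ by conjugation invariance.

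First, I would apply the Weyl integration formula. For $f \in L^1(G)$ nonnegative,
\[
\int_G f(y)\, dy = \sum_{[H]} \frac{1}{|W(G,H)|} \int_{H^{\reg}} |D_H(h)|^2 \int_{G/H} f(xhx^{-1})\, d(xH)\, dh,
\]
where the outer sum runs over conjugacy classes of Cartan subgroups $H < G$, $W(G,H) = N_G(H)/H$, and $D_H$ is the Weyl denominator on $H$. Taking $f = \psi$ and invoking $\|\psi\|_{L^1} < \infty$, Tonelli's theorem forces the double integral on the right to be finite for every $[H]$. Hence for each Cartan $H$ and for almost every $h \in H^{\reg}$,
\[
|D_H(h)|^2 \int_{G/H} \psi(xhx^{-1})\, d(xH) < \infty.
\]
Since $D_H$ is nonvanishing on $H^{\reg}$ (by definition of regularity), the orbital integral itself is finite for a.e.\ $h \in H^{\reg}$, so such $h$ have FGOI.

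Next, I would observe that being FGOI is a conjugation-invariant property of $g$: if $g' = ygy^{-1}$ then $Z_G(g') = yZ_G(g)y^{-1}$, the conjugation orbits of $g$ and $g'$ coincide, and the orbital integrals $\int_{G/Z_G(g)}\psi(xgx^{-1})d(xZ)$ and $\int_{G/Z_G(g')}\psi(xg'x^{-1})d(xZ)$ are equal. Since $G^{\reg}$ is open, dense, and of full measure in $G$, and every element of $G^{\reg}$ is conjugate to an element of $H^{\reg}$ for exactly one conjugacy class $[H]$, it remains to check that the union of the $G$-orbits through the exceptional null sets $B_H \subset H^{\reg}$ is null in $G$. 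But applying the Weyl integration formula to the indicator function of this union yields
\[
\vol\Bigl(\bigcup_{[H]} G\cdot B_H\Bigr) = \sum_{[H]} \frac{1}{|W(G,H)|} \int_{B_H} |D_H(h)|^2 \vol(G/H)\, dh \cdot \chi\text{-factor},
\]
and each integral vanishes because $B_H$ has measure zero in $H^{\reg}$. Consequently, almost every element of $G$ has FGOI.

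The main obstacle is bookkeeping around the Weyl integration formula: one must sum over all conjugacy classes of Cartan subgroups (not just a fundamental one), verify that the exceptional null sets push forward to null sets in $G$, and confirm the formula in its stated form applies to our connected semisimple $G$. Once these standard points are in place, the argument is essentially Fubini plus conjugation invariance. A minor care is required to note that non-regular elements form a proper real-analytic subset of $G$ and hence have measure zero, so they do not affect the ``almost every'' conclusion.
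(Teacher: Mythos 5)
Your proposal is correct and follows essentially the same route as the paper: Lemma \ref{lem varphi L1} plus the Weyl integration formula and Tonelli's theorem give finiteness of the orbital integral for almost every regular element of each Cartan subgroup, and conjugation invariance transfers this to almost all of $G$. Your extra verification that the $G$-saturation of the exceptional null sets is null is a point the paper leaves implicit, but it is the same argument.
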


To prove this proposition, we first note the following.
\begin{lemma}\label{lem varphi L1}
The function $\psi$ is in $L^1(G)$.
\end{lemma}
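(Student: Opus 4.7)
The plan is to reduce the integral of $\psi$ over $G$ to a one-dimensional integral via the ``layer cake'' formula, and then apply the exponential volume growth of metric balls in $G$. Since $\{x \in G : \psi(x) > t\} = B_{\sqrt{-\log t}}(e)$ for $t \in (0,1)$ (and is empty for $t \geq 1$), the layer-cake formula combined with the substitution $t = e^{-s^2}$ gives
\[
\int_G \psi(x)\, dx = \int_0^1 \vol(B_{\sqrt{-\log t}}(e))\, dt = \int_0^\infty 2s e^{-s^2} \vol(B_s(e))\, ds,
\]
where $B_s(e)$ is the $d$-ball of radius $s$ around the identity. Finiteness thus reduces to an upper bound $\vol(B_s(e)) \leq C_1 e^{C_2 s}$ for some constants $C_1, C_2 > 0$, since the Gaussian factor defeats any exponential.

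For the volume bound, I would use the Cartan decomposition. The chosen inner product on $\kg$ is $K$-invariant (with $\kk \perp \kp$), so the left-invariant metric $d$ is right-$K$-invariant and the projection $G \to G/K$ is a Riemannian submersion; consequently $d(e,g) \geq \sigma(g)$, where $\sigma$ is the symmetric-space distance from $eK$ to $gK$. Writing $g = k_1 e^H k_2$ with $H$ in the closed positive Weyl chamber $\overline{\mathfrak{a}^+} \subset \kp$, one has $\sigma(g) = |H|$, so
\[
B_s(e) \subset \{g \in G : \sigma(g) \leq s\} = K \cdot \exp\bigl(\{H \in \overline{\mathfrak{a}^+} : |H| \leq s\}\bigr) \cdot K.
\]
The Haar measure in $KAK$-coordinates has Jacobian $\delta(H) = \prod_{\alpha \in \Sigma^+} \sinh(\alpha(H))^{m_\alpha} \leq C e^{2\rho(H)}$, with $\rho$ half the sum of positive restricted roots, so
\[
\vol(B_s(e)) \leq C \int_{H \in \overline{\mathfrak{a}^+},\, |H| \leq s} e^{2\rho(H)}\, dH \leq C_1 e^{C_2 s}
\]
with $C_2 = 2|\rho|$. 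Plugging this into the layer-cake integral yields
\[
\int_G \psi(x)\, dx \leq C_1 \int_0^\infty 2s\, e^{-s^2 + C_2 s}\, ds < \infty.
\]

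The only step requiring care is the comparison $d(e,g) \geq \sigma(g)$, which follows cleanly from the Riemannian-submersion picture for $G \to G/K$ under our $K$-invariant inner product. If one prefers to sidestep the Cartan-decomposition computation entirely, Bishop--Gromov provides a one-line alternative: any left-invariant metric on $G$ has sectional curvature bounded below, so volume growth of geodesic balls is at most exponential, which is all that is needed for the Gaussian to ensure integrability.
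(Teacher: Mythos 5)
Your proof is correct. At its core it rests on the same computation as the paper's: integration in $KAK$-coordinates, where the Jacobian $\prod_{\alpha\in\Sigma^+}\sinh(\alpha(H))^{m_\alpha}$ is dominated by $e^{2\rho(H)}$, an exponential that the Gaussian defeats. The difference is in the packaging. The paper integrates $\psi(kak')J(a)$ directly over $K\times A^+\times K$, using $d(e,a)=\|\log a\|$ and absorbing the $K$-factors via a comparison of $d(e,kak')$ with $d(e,a)$; you instead pass through the layer-cake formula, reduce everything to the single statement $\vol(B_s(e))\leq C_1e^{C_2s}$, and then verify that statement with the same Jacobian estimate, using the submersion inequality $d(e,g)\geq\sigma(g)$ to place $B_s(e)$ inside $\{\sigma\leq s\}$. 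Both comparisons are legitimate: since the inner product on $\kg$ is $\Ad(K)$-invariant, the left-invariant metric is right-$K$-invariant and $G\to G/K$ is indeed a Riemannian submersion, hence $1$-Lipschitz. What your formulation buys is the clean separation of the two ingredients --- Gaussian decay of $\psi$ versus at-most-exponential volume growth of balls --- which makes your closing remark available: a left-invariant metric is homogeneous, so its Ricci curvature is bounded below and Bishop--Gromov gives exponential volume growth with no structure theory at all. That alternative is genuinely more general (it applies to any Lie group, not just semisimple ones) and is arguably the shortest complete argument, whereas your main route and the paper's are essentially the same estimate seen from two angles.
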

\begin{proof}
Using the decomposition $G = KA^+K$, we have
\[
\int_G \psi(x) \, dx = \int_K \int_{A^+} \int_K \psi(kak') J(a) \, dk\, da\, dk',
\]
where for all $a \in A$,
\[
J(a) := \prod_{\alpha \in \Sigma^+} | e^{\langle \alpha, \log a \rangle } - e^{-\langle\alpha, \log a\rangle} |^{m_{\alpha}},
\]
with $\Sigma^+$ a choice of positive restricted roots for $(\kg, \ka)$, and $m_{\alpha} := \dim \kg_{\alpha}$. Now for all $a \in A$, $d(e,a) = \|\log(a)\|$.  So
\[
\psi(a) = e^{-\|\log(a)\|^2}.
\]
Therefore, $\psi J \in L^1(A)$, because $J$ only has linear functions of $\log(a)$ in its exponents. And for all $k, k' \in K$, we have
\[
d(e, kak') \geq d(e,a) -2\diam(K).
\]
(We write $\diam(X)$ for the diameter of a bounded metric space $X$.)
So $\psi(kak') \leq e^{-2\diam(K)}\psi(a)$. Hence $a \mapsto \psi(kak')J(a)$ is in $L^1(A)$, and the claim follows.
\end{proof}

\noindent \emph{Proof of Proposition \ref{prop FGOI ae}.}
By Lemma \ref{lem varphi L1} and Weyl's integration formula (see e.g. Proposition 5.27 in \cite{Knapp}), we have
\beq{eq WIF}
\infty > \int_G \psi(x)\, dx = \sum_H \frac{1}{\# W(G,H)} \int_{G/H\times H} \psi(xhx^{-1}) |D_H(h)|^2 dh\, d(xH).
\eeq
Here the sum ranges over  representatives $H<G$ that are invariant under the Cartan involution of the conjugacy classes of Cartan subgroups. For each such $H$, $W(G,H)$ is the corresponding Weyl group, and $D_H$ is the Weyl denominator. For a fixed $H$, let $\tilde H \subset H$ be the set of elements $h\in H$ for which the integral
\beq{eq int GH}
 \int_{G/H} \psi(xhx^{-1}) d(xH)
\eeq
converges. Then \eqref{eq WIF} implies that $H\setminus \tilde H$ has measure zero. Now let $h \in G$, and suppose it is regular. Then $H := Z_G(h)$ is a Cartan subgroup. (We can ensure that $H$ is invariant under the Cartan involution by conjugating by a group element; this does not change the integral \eqref{eq int GH}.)  We conclude that $h$ has FGOI if 
\[
h\in H^{\reg}\cap \tilde H,
\]
so the set of elements that do not have FGOI has measure zero.
\hfill $\square$

\subsection{An estimate on $G$}

As in the proof of Lemma \ref{lem heat SE}, let $\tilde \kappa^{G,K}_t \in C^{\infty}(G) \otimes \End(S_{\kp})$ be given by
\[
\tilde \kappa^{G,K}_t(x) = x\kappa^{G,K}_t(e,x)x^{-1},
\]
for $x \in G$.
There are constants $C,s>0$ such that for all $t \in {]0,s]}$,
\beq{eq est ht exp}
\|\tilde \kappa^{G,K}_t(x)\| \leq Ct^{-\dim(G)/2}e^{-\frac{d(e,x)^2}{16t}}.
\eeq
For heat kernels associated to the Laplace--Beltrami operator on scalar functions, this is
Theorem 4 in \cite{CLY81}. In (2.6) in \cite{BM}, Barbasch and Moscovici express the heat kernel of a Dirac operator on $G$ on terms of the heat kernel of the Laplacian on scalar functions. This gives the desired estimate.

Consider the relatively compact subset
\beq{eq def V}
V := \bigl\{x \in G; d(x, K) < 2\sqrt{2} \dim(G)^{1/2}\bigr\} \subset G.
\eeq
Define the map $\tilde \chi\colon G \to G$ by
\[
\tilde \chi(x) = xgx^{-1},
\]
for $x \in G$, and let $\chi\colon G/Z \to G$ by the induced map on the quotient. For an odd endomorphism $A$ of $S_{\kp}$, we will denote its supertrace by $\Str(A)$. 
\begin{lemma}\label{lemma est ht}
Suppose $g$ has FGOI. Then
we have
\[
\lim_{t\downarrow 0} \int_{(G/Z)\setminus \chi^{-1}(V)} |\Str \tilde \kappa^{G,K}_t(\tilde \chi(x))|d(xZ) = 0.
\]
\end{lemma}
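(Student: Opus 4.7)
My plan is to combine the Gaussian heat kernel bound \eqref{eq est ht exp} with the FGOI hypothesis. The first observation is geometric: for $xZ \in (G/Z) \setminus \chi^{-1}(V)$ we have $y := \tilde \chi(x) = xgx^{-1} \notin V$, and since $e \in K$ the definition \eqref{eq def V} forces
\[
d(e, y) \;\geq\; d(y, K) \;\geq\; 2\sqrt{2}\,\dim(G)^{1/2},
\]
so $d(e, y)^2 \geq 8 \dim(G)$ on the integration domain. The threshold $2\sqrt{2}\,\dim(G)^{1/2}$ in \eqref{eq def V} is chosen precisely so that this $8\dim(G)$ will cancel the polynomial blow-up $t^{-\dim(G)/2}$ in the next step.

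Next I would apply \eqref{eq est ht exp}, absorbing $\dim(S_{\kp})$ into the constant via $|\Str A| \leq \dim(S_{\kp})\,\|A\|$, to get a constant $C'$ with
\[
|\Str \tilde \kappa^{G,K}_t(y)| \;\leq\; C'\, t^{-\dim(G)/2}\, e^{-d(e,y)^2/(16t)}
\]
for $t \in (0,s]$. The key algebraic trick is the decomposition
\[
\frac{d(e,y)^2}{16t} \;=\; d(e,y)^2 \;+\; d(e,y)^2 \left(\frac{1}{16t} - 1\right).
\]
For $t$ small enough (say $t \leq 1/32$) one has $\frac{1}{16t} - 1 \geq \frac{1}{32t}$, which combined with $d(e,y)^2 \geq 8\dim(G)$ bounds the second term below by $\dim(G)/(4t)$. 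Hence, on the integration domain,
\[
|\Str \tilde \kappa^{G,K}_t(y)| \;\leq\; C'\, t^{-\dim(G)/2}\, e^{-\dim(G)/(4t)}\, \psi(y),
\]
with $\psi$ as in \eqref{eq def psi}.

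Finally I would integrate this pointwise bound and enlarge the region of integration to all of $G/Z$:
\[
\int_{(G/Z)\setminus \chi^{-1}(V)} |\Str \tilde \kappa^{G,K}_t(\tilde \chi(x))|\, d(xZ) \;\leq\; C'\, t^{-\dim(G)/2}\, e^{-\dim(G)/(4t)} \int_{G/Z} \psi(xgx^{-1})\, d(xZ).
\]
The integral on the right is finite by the FGOI hypothesis on $g$, while the prefactor $t^{-\dim(G)/2} e^{-\dim(G)/(4t)}$ tends to $0$ as $t \downarrow 0$ (since $1/t$ beats $-\log t$ in the exponent). This proves the lemma. The only real obstacle is the constant bookkeeping: one must check that the specific constant defining $V$ is large enough to turn the Gaussian factor in \eqref{eq est ht exp} into a product of a $t$-independent Gaussian $\psi$ (which is integrated against the FGOI condition) and a factor vanishing as $t \downarrow 0$. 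Since the constants were arranged with exactly this splitting in mind, the estimate closes cleanly.
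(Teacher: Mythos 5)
Your proof is correct, and it differs from the paper's in how the two shared ingredients --- the Gaussian bound \eqref{eq est ht exp} and the lower bound $d(e,y)^2\ge 8\dim(G)$ off $V$ --- are combined. The paper uses the calculus observation (Lemma \ref{lem calculus}) that $t\mapsto Ct^{-\dim(G)/2}e^{-d(e,x)^2/16t}$ is \emph{increasing} on $]0,1]$ precisely when $d(e,x)>2\sqrt2\dim(G)^{1/2}$; this produces a single $t$-independent dominating function on $G\setminus V$, and the lemma then follows from pointwise decay of $\tilde\kappa^{G,K}_t$ away from $e$ together with the dominated convergence theorem. You instead split $e^{-d^2/16t}=e^{-d^2}\cdot e^{-d^2(1/16t-1)}$ and use the lower bound on $d^2$ only in the second factor, obtaining the explicit estimate $C't^{-\dim(G)/2}e^{-\dim(G)/4t}\int_{G/Z}\psi(xgx^{-1})\,d(xZ)$; this avoids dominated convergence and gives a quantitative rate of decay. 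Two small dividends of your route: the surviving Gaussian is exactly $\psi$, so the FGOI hypothesis applies verbatim (the paper's dominating function $\psi_1(y)=Ce^{-d(e,y)^2/16}$ is a slower Gaussian, whose integrability over the orbit requires a further word, e.g.\ using $\psi_{1/16}$, a constant multiple of $\psi$, as the majorant instead); and your argument works for any positive radius in the definition of $V$, whereas the monotonicity argument needs the radius to be at least $2\sqrt2\dim(G)^{1/2}$ --- so your closing remark that the constant is \emph{needed} for the splitting slightly overstates matters, though this is harmless.
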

\begin{proof}
For $t>0$, consider the function $\psi_t$ on $G$, given by
\[
\psi_t(x) = C t^{-\dim(G)/2}e^{-\frac{d(e,x)^2}{16t}},
\]
the right hand side of \eqref{eq est ht exp}. By Lemma \ref{lem calculus} below, the function $t\mapsto \psi_t(x)$ is increasing on $]0,1]$ for all $x \in G$ satisfying 
\[
e^{-\frac{d(e,x)^2}{16} } < e^{-\dim(G)/2},
\]
i.e.\ 
\[
d(e,x) > 2\sqrt{2}\dim(G)^{1/2}.
\]
Therefore, we have for all $t \in {]0,1]}$,
\[
\psi_t|_{G\setminus V} \leq \psi_1|_{G\setminus V},
\]
so for all $t \in {]0,\min(s,1)]}$,
\[
|\Str \tilde \kappa^{G,K}_t|_{G \setminus V}| \leq \psi_1|_{G\setminus V}.
\]
This implies that for all $xZ \in (G/Z) \setminus  \chi^{-1}(V)$,
\[
|\Str \tilde \kappa^{G,K}_t(\tilde \chi(x))| \leq \psi_1(\tilde \chi(x)).
\]
Since $g$ has FGOI, the function $x \mapsto \psi_1(\tilde \chi(x))$ is in $L^1(G/Z)$. Since $\tilde \kappa^{G,K}_t \to 0$ pointwise away from $e$, 
 the claim follows by the dominated convergence theorem.
\end{proof}

\begin{lemma} \label{lem calculus}
Let $b \in {]0,1]}$ and $c>0$. Consider the function $f\colon {]0,\infty[}\to \R$ given by
\[
f(t) = t^{-c}b^{1/t},
\]
for $t>0$. Then $f$ is increasing on $]0,1]$ if $b < e^{-c}$.
\end{lemma}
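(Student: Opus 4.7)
The plan is to take logarithms and reduce the claim to an elementary inequality. Since $b\in{]0,1]}$ we have $\log b \leq 0$, and $f(t)>0$ throughout, so we may set $h(t) := \log f(t) = -c\log t + (\log b)/t$ on ${]0,\infty[}$ and show $h'(t)\geq 0$ on ${]0,1]}$ under the hypothesis $b<e^{-c}$.

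A direct differentiation gives
\[
h'(t) \;=\; -\frac{c}{t} - \frac{\log b}{t^2} \;=\; \frac{-ct - \log b}{t^2}.
\]
Since $t^2>0$, the sign of $h'(t)$ is that of $-\log b - ct$. The hypothesis $b<e^{-c}$ is equivalent to $-\log b > c$, hence for any $t\in{]0,1]}$ we have $-\log b > c \geq ct$, so $h'(t) > 0$. Therefore $h$, and hence $f=e^h$, is strictly increasing on ${]0,1]}$.

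There is essentially no obstacle here; this is a one-line calculus computation. The only thing to be slightly careful about is the sign bookkeeping (since $\log b \leq 0$), but the reduction $f$ increasing $\iff$ $\log f$ increasing handles that cleanly.
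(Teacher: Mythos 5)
Your proof is correct and matches the paper's, which simply asserts that $f'(t)>0$ on ${]0,1]}$ under the stated condition; your logarithmic differentiation is just the natural way to carry out that derivative check.
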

\begin{proof}
Under the condition stated, one has $f'(t)>0$ for all $t \in {]0,1]}$.
\end{proof}

\subsection{An estimate on $M$} \label{sec est M}

We now come to the most important step in the proof of Theorem \ref{thm fixed pt}.
\begin{proposition}\label{prop est kM}
Suppose $g$ has FGOI.
Let $c^g$ be as in part (b) of Lemma \ref{lem Trg conv}. There is a relatively $Z$-cocompact, $Z$-invariant neighbourhood $U$ of $M^g$ in $M$ such that for all $Z$-invariant neighbourhoods $U'$ of $M^g$ contained in $U$, we have
\[
\lim_{t \downarrow 0} \int_{M \setminus U'} c^g(m)|\Str \kappa_t(m, gm)g|\, dm = 0.
\]
\end{proposition}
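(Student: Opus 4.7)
The strategy is to decompose the heat kernel via the slice theorem $M = G\times_K N$ and reduce the estimate to a combination of the integral estimate on $G$ given by Lemma~\ref{lemma est ht} with a local bound on the compact manifold $N$.

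\emph{Step 1 (Supertrace factorisation).} For $m = [y,n] \in M$ with $y \in G$ and $n \in N$, Lemma~\ref{lem decomp heat} together with the decomposition $S|_N = S_N \otimes S_{\kp}$ gives, in the slice trivialisation (in which the fibrewise action of $g$ becomes the identity),
\[
\Str\bigl(\kappa_t(m,gm)\,g\bigr) = \Str_{S_{\kp}}\bigl(\tilde\kappa^{G,K}_t(y^{-1}gy)\bigr)\cdot \Str_{S_N \otimes W|_N}\bigl(\kappa^N_t(n,n)\bigr),
\]
using the left-invariance $\kappa^{G,K}_t(y,gy) = \kappa^{G,K}_t(e,y^{-1}gy)$ and the conjugation-invariance of $\Str$ to re-express the $G$-factor in terms of $\tilde\kappa^{G,K}_t$.

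\emph{Step 2 (Choice of $U$).} Let $V$ be the open set from \eqref{eq def V}. Since the left-invariant metric on $G$ is $\Ad(K)$-invariant, $V$ is stable under $K$-conjugation, so the set
\[
U_V := \{[y,n] \in M : y^{-1}gy \in V\}
\]
is well-defined, open, $Z$-invariant, and contains $M^g$ (the fixed-point condition forces $y^{-1}gy \in K \subset V$). Using compactness of $M^g/Z$ from Lemma~\ref{lem MgZ cpt}, choose a $Z$-invariant open tubular neighbourhood $U$ of $M^g$ with $\overline{U}/Z$ compact and with $\overline{U} \subset U_V$ (possible by uniform continuity on the compact quotient).

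\emph{Step 3 (Splitting).} For any $Z$-invariant neighbourhood $U' \subset U$ of $M^g$, decompose
\[
\int_{M \setminus U'} c^g(m)\bigl|\Str\kappa_t(m,gm)\,g\bigr|\,dm = I_1(t) + I_2(t),
\]
where $I_1(t)$ is the integral over $M \setminus U_V$ and $I_2(t)$ the integral over $U_V \setminus U'$.

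\emph{Step 4 (Estimating $I_1$).} The condition defining $M \setminus U_V$ depends only on the $G$-coordinate. Combining Step~1 with the Fubini/substitution identity from the proof of Lemma~\ref{lem Trg conv}(c) (with absolute values, valid by absolute convergence as established there; the substitution $z = y'^{-1}x$ used there converts the restriction $y^{-1}gy \notin V$ into $zgz^{-1} \notin V$), one obtains
\[
I_1(t) = \Bigl(\int_{(G/Z)\setminus \chi^{-1}(V)} \bigl|\Str_{S_{\kp}}\tilde\kappa^{G,K}_t(xgx^{-1})\bigr|\,d(xZ)\Bigr) \cdot \Bigl(\int_N \bigl|\Str_{S_N \otimes W|_N}\kappa^N_t(n,n)\bigr|\,dn\Bigr).
\]
The first factor tends to $0$ as $t \downarrow 0$ by Lemma~\ref{lemma est ht}. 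The second factor is uniformly bounded for $t \in {]0,1]}$ by local index theory on the compact manifold $N$: the fibrewise supertrace of the Dirac heat kernel converges, uniformly in $n$, to the local Atiyah--Singer density as $t \downarrow 0$.

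\emph{Step 5 (Estimating $I_2$).} The set $S := \supp(c^g) \cap (M\setminus U')$ is a closed subset of the compact set $\supp(c^g)$, hence compact, and disjoint from $M^g$ since $M^g \subset U'$. Therefore
\[
\delta := \inf\{d(m,gm) : m \in S\} > 0.
\]
Since $M/G$ is compact, $M$ has bounded geometry, and the standard off-diagonal Gaussian bound for the heat kernel of the elliptic operator $D^2$ gives $\|\kappa_t(m,gm)\| \leq C\,t^{-\dim(M)/2}\,e^{-d(m,gm)^2/8t}$ uniformly for $t \in {]0,1]}$ and $m$ in a fixed compact subset. Integrating against $c^g$ over $S \cap (U_V \setminus U')$ yields $I_2(t) \leq C'\,t^{-\dim(M)/2}\,e^{-\delta^2/8t} \to 0$ as $t \downarrow 0$.

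\emph{Main obstacle.} The subtlest points are: (i) the careful verification of the supertrace factorisation in Step~1, which requires handling the graded tensor product $S = S_N \otimes S_{\kp}$ and the compatibility of $g$ with the slice trivialisation; (ii) the uniform-in-$t$ bound on $\int_N |\Str\kappa^N_t(n,n)|\,dn$ used in Step~4, which rests on Getzler-style rescaling for Dirac heat kernels on compact manifolds; and (iii) the bookkeeping needed in Step~4 to track the restriction ``$y^{-1}gy \notin V$'' on the $M$-side through the substitution used in the proof of Lemma~\ref{lem Trg conv}(c) to the restriction $xgx^{-1} \notin V$ on the $G/Z$-side.
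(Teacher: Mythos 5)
Your Steps 1--4 follow the paper's own route almost exactly: the choice $U_V=\tilde\chi^{-1}(V)^{-1}N$ is the paper's set $U$ from \eqref{eq def U} (shown to be a relatively $Z$-cocompact neighbourhood of $M^g$ in Lemma \ref{lem nghbhd U}), the factorisation of the supertrace is Lemma \ref{lem decomp heat}, the change of variables converting the condition $y^{-1}gy\notin V$ into $xgx^{-1}\notin V$ on $(G/Z)$ is exactly Lemma \ref{lem int cg}, and the product estimate with the bounded factor $\int_N|\Str\kappa^N_t(n,n)|\,dn$ is Lemma \ref{lem est kM}. So the main part of your argument is sound and matches the paper.

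Step 5, however, contains a genuine error: you assert that $\supp(c^g)$ is compact, and you need this both to get $\delta>0$ and to make the final integration finite. The function $c^g(m)=\int_G c_G(x)c(xgm)\,dx$ is a cutoff function for the action of $Z$ on $M$ (Lemma \ref{lem cg cutoff}), and $M/Z$ is in general \emph{not} compact, so $\supp(c^g)$ cannot be compact: its image must be all of $M/Z$. This failure occurs precisely in the case that matters for the application to the character formula, namely $g\in T$ regular elliptic, where $Z\cong T$ is compact and $M/Z$ is noncompact. The repair is to exploit $Z$-invariance instead of compactness: the function $m\mapsto d(m,gm)$ is continuous and $Z$-invariant (as $Z$ centralises $g$ and acts by isometries), and $(\overline{U_V}\setminus U')/Z$ is compact and disjoint from $M^g$, so $\delta:=\inf\{d(m,gm):m\in\overline{U_V}\setminus U'\}>0$; likewise $\int_{\overline{U_V}\setminus U'}c^g(m)\,dm<\infty$ because $c^g$ is a $Z$-cutoff and the region is relatively $Z$-cocompact. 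With these corrections your Gaussian off-diagonal bound does close the argument; the paper instead avoids quantitative off-diagonal estimates here, using only that $\kappa_t(m,gm)\to 0$ pointwise off $M^g$, that $m\mapsto\|\kappa_t(m,gm)\|$ is $Z$-invariant and hence uniformly bounded in $t$ on the $Z$-cocompact set $\overline{U}\setminus U'$, and dominated convergence. Either route works once the compactness claim is replaced by the $Z$-cocompactness argument.
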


Let $V \subset G$ be as in \eqref{eq def V}. We claim that the set
\beq{eq def U}
U := \tilde \chi^{-1}(V)^{-1}N \subset M
\eeq
has the properties needed in Proposition \ref{prop est kM}.
\begin{lemma}\label{lem nghbhd U}
The set $U$ is a relatively $Z$-cocompact neighbourhood of $M^g$.
\end{lemma}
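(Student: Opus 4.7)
The plan is to verify four properties of $U = \tilde\chi^{-1}(V)^{-1} N$ in turn: $U$ is open, contains $M^g$, is $Z$-invariant, and has compact closure modulo $Z$. The first three are fairly direct from the definitions; the real work is in the fourth.

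First I would observe that $V$ is open and relatively compact in $G$. Openness is clear. For relative compactness, note that by right-$K$-invariance of $d$ (coming from $K$-invariance of the inner product on $\kg$), any $x\in V$ satisfies $xk^{-1}\in B(e,R)$ for some $k\in K$, where $R=2\sqrt{2}\dim(G)^{1/2}$, so $V\subset B(e,R)\cdot K$, which is relatively compact by Hopf--Rinow applied to the complete Riemannian manifold $G$. Consequently $\tilde\chi^{-1}(V)$ is open and so is $W:=\tilde\chi^{-1}(V)^{-1}$, and then $U=WN$ is open in $M$ since the action map $G\times N\to M$ is a submersion via the diffeomorphism $G\times_K N\xrightarrow{\cong} M$.

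Next, I would check $M^g\subset U$: writing any $m\in M^g$ as $m=yn$ with $y\in G$, $n\in N$, the relation $gm=m$ yields $y^{-1}gy\in G_n=K_n\subset K\subset V$, so $y\in W$ and $m\in U$. For $Z$-invariance, for $z\in Z$ and $y\in W$ one computes $(zy)^{-1}g(zy)=y^{-1}z^{-1}gzy=y^{-1}gy\in V$, using $z\in Z_G(g)$, so $zy\in W$.

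The main step is relative $Z$-cocompactness. Here I would use that $g$ is semisimple, which implies that the conjugacy class of $g$ in $G$ is closed and that $\chi\colon G/Z\to G$, $xZ\mapsto xgx^{-1}$, is a homeomorphism onto this closed subset, hence proper. Since $V$ is relatively compact, $\chi^{-1}(V)=\tilde\chi^{-1}(V)/Z$ is relatively compact in $G/Z$. The inversion map on $G$ gives a homeomorphism $G/Z\xrightarrow{\cong} Z\backslash G$ carrying $\tilde\chi^{-1}(V)/Z$ onto $Z\backslash W$, so $Z\backslash W$ is relatively compact. Because $V$ is $K$-conjugation-invariant, $W$ is right-$K$-invariant, and the identification $M\cong G\times_K N$ sends $U$ to $W\times_K N$. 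Therefore
\[
\overline{U}/Z\cong (Z\backslash \overline W)\times_K N,
\]
which is compact since $Z\backslash\overline W$ and $K$ are compact and $N$ is compact (because $N/K\cong M/G$ is compact and $K$ is compact).

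The step I expect to be the main obstacle is the properness of $\chi$; everything else is routine bookkeeping once the right identifications are in place. If closedness of the semisimple conjugacy class were not available as a black box, one would need to invoke a specific reference (e.g.\ Harish-Chandra's structural results for real reductive groups) to justify that $\chi$ is a topological embedding onto a closed submanifold.
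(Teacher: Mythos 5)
Your proof is correct and follows essentially the same route as the paper's: the containment $M^g\subset U$ via $y^{-1}gy\in G_n\subset K\subset V$, and relative $Z$-cocompactness via closedness of the semisimple conjugacy class, the diffeomorphism $\chi\colon G/Z\to(g)$, relative compactness of $V$, and compactness of $N$. The extra verifications you supply (openness of $U$, $Z$-invariance, relative compactness of $V$) are details the paper leaves implicit, and the properness of $\chi$ that you flag is exactly the standard fact about semisimple elements that the paper also invokes without proof.
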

\begin{proof}
The conjugacy class $(g) = \tilde \chi(G)$ is closed in $G$. Hence $V \cap (g)$ is a relatively compact subset of $(g)$. Since $\chi\colon G/Z \to (g)$ is a diffeomorphism, the inverse image $\chi^{-1}(V)  = \chi^{-1}(V \cap (g))\subset G/Z$ is relatively compact. So $\tilde \chi^{-1}(V)$ is relatively $Z$-cocompact with respect to right multiplication. Hence $\tilde \chi^{-1}(V)^{-1}$ relatively $Z$-cocompact with respect to left multiplication. By 
 compactness of $N$, this implies that  $U$ is relatively cocompact for the action by $Z$ on $M$.

Furthermore, let $x\in G$ and $n \in N$, and suppose that $gxn = xn$. Then 
\[
\tilde \chi(x^{-1}) =
x^{-1}gx \in G_n = K_n \subset K \subset V.
\]
So $xn \in U$.
\end{proof}

\begin{lemma}\label{lem int cg}
Let $c^g$ be as in part (b) of Lemma \ref{lem Trg conv}. Let $X \subset G$ be a subset invariant under right multiplication by $Z$. Then for all $\tilde \kappa \in \tilde \cC(E)$,
\beq{eq int cg Z}
\int_{M\setminus X^{-1}N} c^g(m)|\Str(\kappa(m, gm)g)|\, dm = \int_N \int_{(G\setminus X)/Z} |\Str(\tilde \kappa(xgx^{-1}, n, n))| \, d(xZ)\, dn.
\eeq
\end{lemma}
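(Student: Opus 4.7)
The plan is to adapt the bookkeeping used in the proof of Lemma \ref{lem Trg conv}(b)--(c), carrying the restriction to $M\setminus X^{-1}N$ through each step. The new ingredient, compared with that earlier computation, is that because $g\in Z$ and $X$ is right-$Z$-invariant we have $Xg^{-1}=X$, and this identity is exactly what lets the chain of substitutions transport the restriction on $m$ to the desired restriction on the $G/Z$-integration variable.

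First I would unfold $c^g(m)=\int_G c_G(x)c(xgm)\,dx$ and apply Fubini (all integrands are nonnegative) to bring the $x$-integral outside the $m$-integral. Using Abels' slice $M=G\times_K N$ together with Lemma~4.1 of \cite{HSIII} (and $\vol(K)=1$), I parametrize $m=y'n$ with $(y',n)\in G\times N$. In the relevant setting $X$ is left-$K$-invariant (e.g.\ $X=\tilde\chi^{-1}(V)$ with $V$ the $K$-biinvariant set in \eqref{eq def V}), so $X^{-1}$ is right-$K$-invariant, the preimage of $X^{-1}N$ in $G\times N$ is exactly $X^{-1}\times N$, and the restriction becomes $y'\in G\setminus X^{-1}$. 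Using \eqref{eq traces} to rewrite $\tr(\kappa(y'n,gy'n)g)=\tr(\tilde\kappa(y'^{-1}gy',n,n))$, the LHS of \eqref{eq int cg Z} reads
\[
\int_G c_G(x)\int_N\int_{G\setminus X^{-1}} c(xgy'n)\,|\Str(\tilde\kappa(y'^{-1}gy',n,n))|\,dy'\,dn\,dx.
\]

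Next I would run the substitution $y''=xgy'$ used in the proof of Lemma \ref{lem Trg conv}(c). It sends $y'^{-1}gy'$ to $y''^{-1}(xgx^{-1})y''$ and transports $G\setminus X^{-1}$ to $G\setminus xgX^{-1}$. Reordering by Fubini and, for fixed $(y'',n)$, substituting $x=y''x'$ in the $x$-integral, the conjugation becomes $x'gx'^{-1}$ and the condition $y''\in G\setminus xgX^{-1}$ becomes $x'g\notin X$, i.e.\ $x'\in G\setminus Xg^{-1}$. Since $X$ is right-$Z$-invariant and $g\in Z$, we have $Xg^{-1}=X$, so the condition simplifies to $x'\in G\setminus X$. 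The factor $\mathbf{1}_{G\setminus X}(x')\,|\Str\tilde\kappa(x'gx'^{-1},n,n)|$ is right-$Z$-invariant in $x'$, so the cutoff identity $\int_Z c_G(y''x'z)\,dz=1$ collapses the $x'$-integral against $c_G(y''x')$ to $\int_{(G\setminus X)/Z} |\Str\tilde\kappa(x'gx'^{-1},n,n)|\,d(x'Z)$. Finally the remaining $y''$-integral $\int_G c(y''n)\,dy''=1$ (the defining property of the $G$-cutoff $c$ on $M$) leaves exactly the RHS of \eqref{eq int cg Z}.

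The only genuine subtlety is tracking the domains through the two substitutions so that the restriction on $y'$ coming from $M\setminus X^{-1}N$ becomes precisely the restriction $x'\in G\setminus X$ after the substitution $x=y''x'$. This is where the hypothesis that $X$ is right-$Z$-invariant is essential, through the identity $Xg^{-1}=X$; without it one would arrive at $(G\setminus Xg^{-1})/Z$ rather than $(G\setminus X)/Z$.
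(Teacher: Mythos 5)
Your proof is correct and follows essentially the same route as the paper's: unfold $c^g$, pass to the slice $G\times_K N$, apply the trace identity \eqref{eq traces}, and perform a change of variables in which the right-$Z$-invariance of $X$ together with $g\in Z$ converts the restriction $y'\in G\setminus X^{-1}$ into $x'\in G\setminus X$, after which the cutoff identities for $c_G$ and $c$ finish the computation (the paper organises the substitutions slightly differently, via $y\mapsto y^{-1}$, $x\mapsto x^{-1}$ and then $x'=x^{-1}gy^{-1}$, $y'=x'yg^{-1}$, but this is the same calculation). Your remark that identifying the preimage of $X^{-1}N$ with $X^{-1}\times N$ uses left-$K$-invariance of $X$ --- which holds for $X=\tilde\chi^{-1}(V)$ in the application --- is a correct observation of a point the paper leaves implicit.
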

\begin{proof}
The proof is a computation that is as straightforward as the computation in the proof of Lemma \ref{lem Trg conv}, but different in some respects. We give the computation here to be complete. The left hand side of \eqref{eq int cg Z} equals
\beq{eq int A}
\int_{N} \int_{G \setminus X^{-1}} \int_G c_G(x)c(xgyn)|\Str(\kappa(yn, gyn)g)|\, dx\, dy\, dn. 
\eeq
As in \eqref{eq traces}, we have for all $y \in G$ and $n \in N$,
\[
\Str(\kappa(yn, gyn)g) = \Str(\tilde \kappa(y^{-1}gy, n,n)).
\]
So by unimodularity of $G$, \eqref{eq int A} equals
\[
\int_{N} \int_{G \setminus X} \int_G c_G(x^{-1})c(x^{-1}gy^{-1}n)| \Str(\tilde \kappa(ygy^{-1}, n,n))|\, dx\, dy\, dn. 
\]
Interchanging the inner two integrals (which is allowed because the integrals converge absolutely as shown in the proof of Lemma \ref{lem Trg conv}), and
substituting $x' = x^{-1}gy^{-1}$ for $x$ and $y' = x'yg^{-1}$ for $y$, we find that the above integral equals
\[
\int_{N}\int_G \int_{x'(G \setminus X^{-1})g^{-1}}  c_G(y')c(x'n)| \Str(\tilde \kappa(x'^{-1}y'gy'^{-1}x', n,n))|\, dy'\, dx'\,  dn. 
\]
The sets $x'(G \setminus X)g^{-1}$ are invariant under right multiplication by $Z$, so since $c_G$ is a cutoff function for that action, the integral becomes
\begin{multline*}
\int_{N}\int_G \int_{(x'(G \setminus X)g^{-1})/Z}  c(x'n)| \Str(\tilde \kappa(x'^{-1}y'gy'^{-1}x', n,n))|\, d(y'Z)\, dx'\,  dn \\
= \int_{N}\int_G \int_{(G \setminus X)/Z}  c(x'n)| \Str(\tilde \kappa(ygy^{-1}, n,n))|\, d(yZ)\, dx'\,  dn \\
=\int_{N} \int_{(G \setminus X)/Z}  | \Str(\tilde \kappa(ygy^{-1}, n,n))|\, d(yZ)\,  dn.
\end{multline*}
\end{proof}


\begin{lemma}\label{lem est kM}
Suppose $g$ has FGOI.
Let $c^g$ be as in part (b) of Lemma \ref{lem Trg conv}. Then
\[
\lim_{t \downarrow 0} \int_{M \setminus U} c^g(m)|\Str \kappa_t(m, gm)g|\, dm = 0.
\]
\end{lemma}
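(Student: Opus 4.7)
The plan is to combine Lemma \ref{lem int cg} with the tensor-product decomposition of the heat kernel (Lemma \ref{lem decomp heat}), and then to combine the Gaussian estimate \eqref{eq est ht exp} with the FGOI assumption to extract a \emph{quantitative} rate of decay. Concretely, I would set $X := \tilde\chi^{-1}(V)$, so that $X^{-1}N = U$ by definition of $U$. Since every $z \in Z = Z_G(g)$ commutes with $g$, we have $\tilde\chi(xz) = xgx^{-1} = \tilde\chi(x)$, so $X$ is invariant under right multiplication by $Z$. Hence Lemma \ref{lem int cg} rewrites the integral over $M\setminus U$ as
\[
\int_N \int_{(G\setminus X)/Z} |\Str \tilde\kappa_t(xgx^{-1}, n, n)|\, d(xZ)\, dn.
\]

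Next I would use Lemma \ref{lem decomp heat}, together with the conversion \eqref{eq kappa tilde} between $\kappa_t$ and $\tilde\kappa_t$, to obtain the factorisation $\tilde\kappa_t(y, n, n) = \tilde\kappa^{G,K}_t(y) \otimes \kappa^N_t(n,n)$. Since both $S_\kp$ and $S_N$ are $\Z_2$-graded (note that $\dim N = \dim M - \dim\kp$ is even), the supertrace on $\End(E|_N)$ factorises as a product, so
\[
|\Str \tilde\kappa_t(xgx^{-1}, n, n)| = |\Str \tilde\kappa^{G,K}_t(\tilde\chi(x))|\cdot |\Str \kappa^N_t(n,n)|,
\]
and the double integral above becomes a product $A(t)B(t)$ of a $(G\setminus X)/Z$-integral $A(t)$ and an $N$-integral $B(t)$.

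The $N$-factor is easy: $N$ is compact and $D_N$ is a Dirac operator, so the standard on-diagonal heat-kernel bound gives $B(t) \leq C_1 t^{-\dim N/2}$ for small $t$. The hard part will be controlling $A(t)$ quantitatively, because the mere conclusion $A(t)\to 0$ from Lemma \ref{lemma est ht} is too weak to beat the polynomial blow-up of $B(t)$. To remedy this I would exploit the Gaussian estimate \eqref{eq est ht exp} together with the lower bound $d(e,y) \geq R_0 := 2\sqrt{2}\dim(G)^{1/2}$ that holds on $G\setminus V$, splitting the exponent: for $t \in {]0,1/16]}$,
\[
e^{-d(e,y)^2/(16t)} = e^{-d(e,y)^2}\, e^{-d(e,y)^2(1/(16t)-1)} \leq e^{R_0^2}\, e^{-R_0^2/(16t)}\, e^{-d(e,y)^2}.
\]
The first two factors are independent of $y$ and provide the desired exponential smallness in $1/t$, while $e^{-d(e,y)^2} = \psi(y)$ is integrable against $d(xZ)$ on $G/Z$ by the FGOI hypothesis. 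This yields $A(t) = O\bigl(t^{-\dim G/2}e^{-R_0^2/(16t)}\bigr)$, and therefore
\[
A(t)B(t) = O\bigl(t^{-(\dim G+\dim N)/2}\, e^{-R_0^2/(16t)}\bigr) \xrightarrow{t\downarrow 0} 0,
\]
since exponential decay in $1/t$ dominates any polynomial.
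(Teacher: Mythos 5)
Your proof is correct, and it diverges from the paper's at the final estimation step in an interesting way. The paper performs the same two reductions (Lemma \ref{lem int cg} with $X = \tilde\chi^{-1}(V)$, then the factorisation of the supertrace via Lemma \ref{lem decomp heat}), but then distributes the work differently: it asserts that the $N$-factor $\int_N |\Str\kappa^N_t(n,n)|\,dn$ is \emph{bounded} uniformly for small $t$ — which implicitly relies on the cancellation in the fibrewise supertrace of the on-diagonal heat kernel (i.e.\ the local index theorem asymptotics on the compact manifold $N$), not just on the crude bound $t^{-\dim N/2}$ — and then invokes the purely qualitative limit of Lemma \ref{lemma est ht} for the $G$-factor. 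You instead accept the polynomial blow-up $B(t) = O(t^{-\dim N/2})$ and compensate by upgrading Lemma \ref{lemma est ht} to a quantitative rate: your splitting $e^{-d(e,y)^2/(16t)} \leq e^{R_0^2}e^{-R_0^2/(16t)}\psi(y)$ on $G\setminus V$ (valid for $t\le 1/16$, and one should also take $t\le s$ so that \eqref{eq est ht exp} applies) correctly isolates an exponential factor $e^{-R_0^2/(16t)}$ independent of $y$ while leaving the FGOI-integrable Gaussian $\psi$ to control the $d(xZ)$-integral. The resulting $O\bigl(t^{-(\dim G+\dim N)/2}e^{-R_0^2/(16t)}\bigr)$ bound is stronger than what the paper records. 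What your route buys is independence from the supertrace cancellation on $N$ (a point the paper passes over with the single word ``bounded'') and an explicit decay rate; what the paper's route buys is that the $G$-side argument is softer (dominated convergence, no rate needed). Both are sound.
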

\begin{proof}
By Lemma \ref{lem int cg}, we have
\[
\int_{M \setminus U} c^g(m)|\Str \kappa_t(m, gm)g|\, dm 
= \int_N \int_{(G/Z)\setminus \chi^{-1}(V)} |\Str \tilde \kappa_t(xgx^{-1}, n, n)|\, d(xZ)\, dn. 
\]
By Lemma \ref{lem decomp heat}, this equals 
\[
\int_N  |\Str \kappa^N_t(n, n)| \, dn \int_{(G/Z)\setminus  \chi^{-1}(V)} |\Str  \tilde \kappa^{G,K}_t(\tilde \chi(x)) | \, d(xZ).
\]
Note that $S_N$ is $\Z_2$-graded because $M$ and $G/K$, and therefore $N$, are even-dimensional. This is why a supertrace appears in the first factor.
This first factor is bounded, so the claim follows by Lemma \ref{lemma est ht}.
\end{proof}

\begin{lemma}\label{lem cg cutoff}
The function $c^g$ as in part (b) of Lemma \ref{lem Trg conv} is a cutoff function for the action by $Z$ on $M$.
\end{lemma}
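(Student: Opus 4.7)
The plan is a direct verification of the defining identity $\int_Z c^g(zm)\, dz = 1$ for every $m \in M$. Non-negativity and continuity of $c^g$ are immediate from the definition, and the integral defining $c^g(m)$ converges because $c$ is compactly supported and $G$ acts properly, so the set of $x \in G$ with $xgm \in \supp(c)$ is relatively compact.

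First I would write out the left-hand side as an iterated integral:
\[
\int_Z c^g(zm)\, dz = \int_Z \int_G c_G(x) c(xgzm)\, dx\, dz.
\]
The key algebraic input is that $z \in Z = Z_G(g)$ commutes with $g$, so $xgz = xzg$, and the substitution $y = xz$ in the inner integral (permissible by right-invariance of $dx$ on $G$, which is unimodular because it is semisimple) converts the integrand into $c_G(yz^{-1}) c(ygm)$. Fubini applies because of compact support of $c$, so after interchanging the order of integration one obtains
\[
\int_G c(ygm) \left( \int_Z c_G(yz^{-1})\, dz \right) dy.
\]

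Next I would use unimodularity of $Z$ (centralisers of semisimple elements in connected semisimple groups are reductive, hence unimodular) to rewrite $\int_Z c_G(yz^{-1})\, dz = \int_Z c_G(yz)\, dz$, which equals $1$ by the defining property of $c_G$ as a cutoff for the right $Z$-action on $G$. This reduces the expression to $\int_G c(ygm)\, dy$. Finally, the substitution $y' = yg$ (again using right-invariance of Haar measure on $G$) gives $\int_G c(y'm)\, dy' = 1$ by the cutoff property of $c$ for the $G$-action on $M$.

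There is no serious obstacle; the only points requiring care are the justification of Fubini (covered by compact support of $c$ plus properness of the action) and the use of unimodularity of both $G$ and $Z$, neither of which is problematic in the semisimple setting at hand. The argument is essentially a bookkeeping exercise exploiting that $z$ commutes with $g$ and that Haar measures are bi-invariant.
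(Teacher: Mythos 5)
Your proof is correct and is essentially the same computation as the paper's: a change of variables in the iterated integral using unimodularity of $G$ and $Z$ together with the cutoff properties of $c_G$ and $c$. The paper substitutes $x' = xgz$ and $z' = gz$ (using $g \in Z$) where you use $xgz = xzg$ and $y = xz$, but these are cosmetic variants of the same bookkeeping.
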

\begin{proof}
For all $m \in M$, we have
\[
\begin{split}
\int_Z c^g(zm) = \int_Z \int_G c_G(x) c(xgzm) \, dx\, dz = \int_Z \int_G c_G(x'z'^{-1}) c(x'm) \, dx'\, dz' = 1,
\end{split}
\]
where we substitute $x' = xgz$ for $x$ and $z' = gz$ for $z$, and use unimodularity of $G$ and $Z$.
\end{proof}

\noindent \emph{Proof of Proposition \ref{prop est kM}.}
The set $U$ is a $Z$-cocompact neighbourhood of $M^g$ by Lemma \ref{lem nghbhd U}. Let $U'$ be a $Z$-invariant neighbourhood of $M^g$ contained in $U$. By Lemma \ref{lem est kM}, it is enough to show that
\beq{eq est U U'}
\lim_{t \downarrow 0} \int_{\overline{U} \setminus U'} c^g(m)|\Str \kappa_t(m, gm)g|\, dm = 0.
\eeq

It is a basic property of heat kernels that $\kappa_t(m, m')$ goes to zero as $t\downarrow 0$, for all distinct points $m, m' \in M$. So $|\Str \kappa_t(m, gm)g|$ goes to zero pointwise as $t \downarrow 0$ for all $m \in M \setminus M^g$.
The function
\beq{eq bd fn}
m \mapsto \| \kappa_t(m, gm) \|
\eeq
is $Z$-invariant, and $|\Str \kappa_t(m, gm)g|$ is at most equal to this function times the rank of $E$ at every point $m \in M$.
Since $\overline{U} \setminus U'$ is disjoint from $M^g$ and $Z$-cocompact, the bounding function \eqref{eq bd fn} is uniformly bounded in $t$ on this set, say by a constant $C>0$. Since $\overline{U} \setminus U'$ is  $Z$-cocompact and, by Lemma \ref{lem cg cutoff}, $c^g$ is a cutoff function for the action by $Z$, the integral
\[
\int_{\overline{U} \setminus U'} c^g(m)\, dm
\]
converges. Hence \eqref{eq est U U'} follows by the dominated convergence theorem. 
\hfill $\square$

\subsection{Proof of Theorem \ref{thm fixed pt}} \label{sec proof}

Proposition \ref{prop est kM} allows us to prove Theorem \ref{thm fixed pt} for twisted $\Spinc$-Dirac operators. This implies the general case.

Let $L_{\det} \to M$ be the determinant line bundle of the $\Spinc$-structure on $M$. Let $R^{\cN}$ be the curvature of the Levi--Civita connection restricted to $\cN$. Let $\hat A(M^g)$ be the $\hat A$-class of $M^g$. Let $[W|_{\supp(c^g)}](g) \in K^0(M^g) \otimes \C$ be defined as in Subsection \ref{sec result}.
\begin{proposition} \label{prop fixed Spinc}
Suppose that $D$ is a twisted $\Spinc$-Dirac operator on $E = S \otimes W$. 
If $G/K$ is odd-dimensional, then for all semisimple $g \in G$,
\[
\tau_g(\ind_G(D)) = 0.
\] 
If $G/K$ is even-dimensional, then
for  all semisimple $g \in G$ with FGOI, we have
\[
\tau_g(\ind_G(D)) = 0
\]
if $g$ is not contained in a compact subgroup of $G$, and 
\beq{eq fixed Spinc}
\tau_g(\ind_G(D)) = \int_{M^g} c^g \frac{\hat A(M^g) \ch([W|_{M^g}](g)) e^{c_1(L_{\det}|_{M^g})}}{\det(1-g e^{-R^{\cN}/2\pi i})^{1/2}},
\eeq
if it is, for any cutoff function $c^g$ for the action by $Z$ on $M^g$.
%
\end{proposition}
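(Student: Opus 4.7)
The plan is to start from the heat-kernel expression of Proposition \ref{prop taug index},
\begin{equation*}
\tau_g(\ind_G(D)) = \int_M c^g(m)\,\Str(\kappa_t(m,gm)\,g)\,dm,
\end{equation*}
which is valid for every $t>0$, and to pass to the limit $t\downarrow 0$. By Proposition \ref{prop est kM}, for any $Z$-invariant open neighborhood $U'$ of $M^g$ contained in the distinguished neighborhood $U$, the contribution of $M\setminus U'$ to the integral tends to zero; since $U'$ can be chosen arbitrarily small, only an infinitesimal neighborhood of $M^g$ contributes in the limit.

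For the first vanishing statement, suppose $g$ is not contained in any compact subgroup of $G$. Properness of the $G$-action on $M$ makes every stabilizer $G_m$ compact, so $g\in G_m$ would force $g$ into a compact subgroup; hence $M^g=\emptyset$, $U'$ may be taken empty, and one obtains $\tau_g(\ind_G(D))=0$. In the odd-dimensional case $\dim(G/K)$ odd, I would argue vanishing through the decomposition of Lemma \ref{lem decomp heat}: the factor $\kappa^{G,K}_t$ acts on $S_\kp$, which in odd dimension carries no canonical $\Z_2$-grading compatible with Clifford multiplication, so the fibre-wise supertrace appearing in the integrand vanishes identically. Equivalently, one may pass to $M\times S^1$ with a trivial circle factor to restore parity and invoke the even-dimensional case together with a K\"unneth-type argument.

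For the main case, with $G/K$ even-dimensional and $g$ in a compact subgroup, I would carry out a local index computation near $M^g$. Combining the factorization of Lemma \ref{lem decomp heat} with near-diagonal Taylor/Mehler expansions of $\kappa^{G,K}_t$ (refining the Gaussian estimate (\ref{eq est ht exp})) and Getzler rescaling applied to $\kappa^{N}_t$ in normal coordinates to $M^g$, the pointwise limit of $\Str(\kappa_t(m,gm)\,g)$ along $M^g$ becomes the standard Atiyah--Segal--Singer local density
\begin{equation*}
\frac{\hat A(M^g)\,\ch([W|_{M^g}](g))\,e^{c_1(L_{\det}|_{M^g})}}{\det(1 - g\,e^{-R^{\cN}/2\pi i})^{1/2}};
\end{equation*}
see, e.g., Chapter 6 of Berline--Getzler--Vergne for the model calculation. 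Dominated convergence, justified on the $Z$-cocompact set $\overline{U}\setminus U'$ by the uniform bounds inherited from Proposition \ref{prop est kM}, permits exchanging limit and integral. By Lemma \ref{lem cg cutoff} the function $c^g$ produced in part (b) of Lemma \ref{lem Trg conv} is a cutoff for the $Z$-action on $M$, so its restriction to the $Z$-invariant submanifold $M^g$ is a cutoff for the $Z$-action on $M^g$, which matches the cutoff appearing on the right-hand side of (\ref{eq fixed Spinc}). Independence of the right-hand side from the choice of $Z$-cutoff on $M^g$ follows from a standard averaging argument analogous to Lemma \ref{lem Trg conv}, so the formula holds for any such cutoff.

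The main obstacle will be executing Getzler rescaling in the non-compact setting. One needs uniform control on the near-diagonal asymptotic expansion of $\kappa^N_t$ over a $Z$-cocompact neighborhood of $M^g$ in order to transplant the compact-case asymptotics. This uniformity follows from the Gaussian off-diagonal decay transmitted through Lemma \ref{lem decomp heat} and the $G$-equivariance of $\kappa_t$, which together reduce the estimate to a precompact fundamental domain for the $Z$-action inside a tubular neighborhood of $M^g$.
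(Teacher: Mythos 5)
Your treatment of the even-dimensional case follows the paper's route: start from Proposition \ref{prop taug index}, localise to $M^g$ using Proposition \ref{prop est kM}, conclude $M^g=\emptyset$ (hence vanishing) when $g$ lies in no compact subgroup by properness of the action, and otherwise run the standard Atiyah--Segal--Singer local heat-kernel asymptotics on a tubular neighbourhood of $M^g$, with the cutoff-independence handled by $Z$-invariance of the local density. That part is fine and matches the paper, which simply cites Theorems 6.11 and 6.16 of Berline--Getzler--Vergne for the model computation rather than redoing Getzler rescaling.

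The gap is in the odd-dimensional case. The paper disposes of it in one line: if $G/K$ is odd-dimensional then $K_0(C^*_rG)=0$ by the Connes--Kasparov isomorphism (Wassermann, Chabert--Echterhoff--Nest), and since $\tau_g$ is a map on $K_0(C^*_rG)$ the number $\tau_g(\ind_G(D))$ vanishes for trivial reasons, with no analysis needed. Your argument instead tries to show that the fibrewise supertrace of the integrand vanishes, but this presupposes the heat-kernel formula of Proposition \ref{prop taug index}, and that formula is only established when $G/K$ is even-dimensional: the decomposition $S|_N=S_N\otimes S_{\kp}$, the splitting $D=D_{G,K}\otimes 1+\varepsilon\otimes D_N$, and Lemma \ref{lem heat SE} (membership of the heat kernels in $\cC(E)$) are all set up under the standing hypothesis that $\kp$ is even-dimensional, so there is no graded $S_{\kp}$ and no Lemma \ref{lem decomp heat} to appeal to in the odd case. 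Moreover the fallback of crossing $M$ with a trivial circle factor does not help, because it changes the parity of $\dim M$ but not of $\dim(G/K)$, and the latter is what controls the degree in which the index lives. You should replace this part of the argument by the $K$-theoretic vanishing $K_0(C^*_rG)=0$.
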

%
\begin{proof}
Let $g \in G$ be semisimple.
First of all, if $G/K$ is odd-dimensional, then $K_0(C^*_rG) = 0$. This is part of the statement of the Connes--Kasparov conjecture, proved in \cite{CEN, Wassermann87}. Since $\tau_g$ is defined on $K_0(C^*_rG)$, this implies that $\tau_g(\ind_G(D)) = 0$. So suppose from now on that $G/K$ is even-dimensional.

By Proposition \ref{prop taug index}, we have for all $t>0$
\beq{eq comp index}
\tau_g(\ind_G(D)) =  \int_M  c^g(m) \Str(\kappa_t(m, gm) g)\, dm,
\eeq
with $c^g$ as in part (b) of Lemma \ref{lem Trg conv}.
If $M^g$ is nonempty, then there is a point $m \in M$ fixed by $g$. So $g \in G_m$, which is a compact group by properness of the action. 
So if $g$  is  not contained in a compact subgroup of $G$ and has FGOI, then $M^g = \emptyset$, and one may take $U' = \emptyset$ in Proposition \ref{prop est kM}. This proposition then implies that the right hand side of \eqref{eq comp index} goes to zero as $t \downarrow 0$.

Now suppose that $g$ has FGOI and is contained in a compact subgroup of $G$. Then Proposition \ref{prop est kM} implies that the right hand side of \eqref{eq comp index} localises to arbitrarily small neighbourhoods of $M^g$ as $t \downarrow 0$. We choose a small enough tubular neighbourhood, which we identify with $\cN$, and apply the usual asymptotic expansion of $\kappa_t$ (see e.g.\ Theorem 6.11 in \cite{BGV}). Then by the same arguments as in the compact case (see e.g. Theorem 6.16 in \cite{BGV}), one obtains the desired expression for the right hand side of \eqref{eq comp index}. 

Here we used that $c^g|_{M^g}$ is a cutoff function for the action by $Z$ on $M^g$ by Lemma \ref{lem cg cutoff}. We have proved \eqref{eq fixed Spinc} for cutoff functions of this form. But since the integrand on the right hand side of \eqref{eq fixed Spinc} without $c^g$ is $Z$-invariant, the integral is independent of the cutoff function (because it equals an integral on $M^g/Z$ that does not involve a cutoff function). So the claim follows for all cutoff functions.
\end{proof}

\begin{proofof}{Theorem \ref{thm fixed pt}}
We return to the general case where $D$ is an elliptic operator, as at the start of Section \ref{sec prelim}.
Let $p_B \colon BM \to M$ be the unit ball bundle in $TM$, and $SM \to M$ the unit sphere bundle. Consider the almost complex manifold $\Sigma M$ obtained by glueing together two copies of $BM$ along $SM$. 
Let $E_{\pm}$ be the even and odd parts of $E$, respectively. Let $W_{\sigma_D} \to \Sigma M$ be the vector bundle obtained by glueing together $p_B^{*}E_+ \to BM$ and $p_B^* E_- \to BM$ along $SM$ via the invertible map $\sigma_D\colon SM \to SM$.
Let $D_{\Sigma M}^{W_{\sigma_D}}$ be the $\Spinc$-Dirac operator on $\Sigma M$ (for the $\Spinc$-structure associated to the almost complex structure), twisted by $W_{\sigma_D}$. Let $[D_{\Sigma M}^{W_{\sigma_D}}] \in K_0^G(\Sigma M)$ be its $K$-homology class.

Let $p_{\Sigma}\colon \Sigma M \to M$ be the projection map, and note that it is proper.
By Theorem 5.0.4 in \cite{BvEII}, we have
\[
[D] = (p_{\Sigma})_*[D_{\Sigma M}^{W_{\sigma_D}}] \quad \in K_0^G(M).
\]
Naturality of the assembly map therefore implies that
\[
\ind_{G}(D) = \ind_{G}(D_{\Sigma M}^{W_{\sigma_D}}). 
\]
Therefore, Proposition \ref{prop fixed Spinc} implies that $\tau_g (\ind_G(D)) = 0$ for all semisimple $g \in G$ if $G/K$ is odd-dimensional. So suppose that $G/K$ is even-dimensional. Let $g \in G$ be semisimple with FGOI. By Proposition \ref{prop FGOI ae}, almost every element of $G$ has FGOI. If 
 $g$ does not lie in a compact subgroup of $G$, then Proposition \ref{prop fixed Spinc} again implies that $\tau_g (\ind_G(D)) = 0$. 
 And if  $g$ lies in a compact subgroup of $G$, then this proposition states that
\[
\tau_g (\ind_{G}(D)) =  \int_{\Sigma M^g} c_{\Sigma M}^g \frac{\hat A((\Sigma M)^g) \ch([W_{\sigma_D}|_{(\Sigma M)^g}](g) )e^{c_1(L_{\det}|_{(\Sigma M)^g})}}{\det(1-g e^{-R^{\cN_{(\Sigma M)^g}}/2\pi i})^{1/2}}
\]
Here $\cN_{(\Sigma M)^g} \to (\Sigma M)^g$ is the normal bundle, and $c_{\Sigma M}^g \in C^{\infty}_c( (\Sigma M)^g)$ is a cutoff function for the action by $Z$ on $(\Sigma M)^g$.  As in the compact case, one finds that the right hand side of the above expression equals the right hand side of \eqref{eq fixed pt}. (We may choose the cutoff function $c_{\Sigma M}^g$ to be constant on the fibres of $(\Sigma M)^g\to M^g$, so it reduces to a cutoff function on $M^g$ as in Theorem \ref{thm fixed pt}.) So Theorem \ref{thm fixed pt} follows.
%
\end{proofof}


\section{The character formula} \label{sec char}

We end this paper by deducing Harish-Chandra's character formula, Corollary \ref{cor char}, from Theorem \ref{thm fixed pt}. We first discuss $K$-theory classes defined by discrete series representations. Then we show that characters of these representations can be recovered from their $K$-theory classes via the trace $\tau_g$. By realising these $K$-theory classes as equivariant indices, we can then apply Theorem \ref{thm fixed pt} to obtain Corollary \ref{cor char}.

Throughout this section, we make the assumptions and use the same notation as in  Subsection \ref{sec char form}. In particular, we suppose that $\rank(G) = \rank(K)$, and we consider a maximal torus $T<K$, and a discrete series representation $\pi$ of $G$ with Harish-Chandra parameter $\lambda \in i\kt^*$.

\subsection{$K$-theory classes and characters}

Let $\xi$ be a $K$-finite unit vector in the representation space of $\pi$. Let $m_{\xi}$ be the corresponding matrix coefficient, mapping $g \in G$ to 
\[
m_{\xi}(g) = (\xi, \pi(g)\xi).
\]
Then $m_{\xi} \in \cC(G)$. Let 
\[
d_{\pi} = \|m_{\xi}\|_{L^2(G)}^{-2}
\]
be the formal degree of $\pi$. Then $d_{\pi}m_{\xi}$ is an idempotent in $\cC(G)$ with respect to convolution. So it defines a class
\[
[d_{\pi}m_{\xi}] \in K_0(\cC(G)).
\]
(See also Subsection 2.2 in \cite{Lafforgue02}.)

The values on $T$ of the global character $\Theta_{\pi}$ of $\pi$ can be recovered from this $K$-theory class. 
\begin{proposition}\label{prop taug char}
Let $g \in T$ be regular. Then
\[
\tau_g([d_{\pi}m_{\xi}]) = 
\Theta_{\pi}(g). 
\]
\end{proposition}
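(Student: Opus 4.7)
The plan is to compute $\tau_g([d_\pi m_\xi])$ explicitly, by decomposing $\pi|_T$ into $T$-weight spaces and applying Schur orthogonality for the square-integrable representation $\pi$. Since $\rank(G)=\rank(K)$ and $g\in T$ is regular, the maximal torus $T$ is a (compact) Cartan subgroup of $G$ and the centraliser $Z_G(g)$ equals $T$. Therefore the definition of $\tau_g$ gives
\[
\tau_g([d_\pi m_\xi]) = d_\pi \int_{G/T} \bigl(\pi(x^{-1})\xi,\,\pi(g)\pi(x^{-1})\xi\bigr)\,d(xT).
\]

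Next, I would decompose $H_\pi=\bigoplus_\lambda H_\pi^\lambda$ into $T$-weight spaces with orthogonal projections $P_\lambda$, so that $\pi(g)$ acts on $H_\pi^\lambda$ by the scalar $\lambda(g)$. Expanding the integrand produces $\sum_\lambda \lambda(g)\|P_\lambda\pi(x^{-1})\xi\|^2$. For an orthonormal basis $\{e_j^\lambda\}$ of $H_\pi^\lambda$, one has $\|P_\lambda\pi(x^{-1})\xi\|^2 = \sum_j|(\xi,\pi(x)e_j^\lambda)|^2$; each summand is right-$T$-invariant in $x$ because $e_j^\lambda$ has weight $\lambda$ and $|\lambda(t)|=1$. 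Using $\vol(T)=1$, the orbital integral over $G/T$ therefore reduces to an integral over $G$, which evaluates by Schur orthogonality for square-integrable representations to $d_\pi^{-1}\|\xi\|^2\|e_j^\lambda\|^2=d_\pi^{-1}$. Summing over $j$ yields $\int_{G/T}\|P_\lambda\pi(x^{-1})\xi\|^2\,d(xT) = d_\pi^{-1}\dim H_\pi^\lambda$, with the finiteness of the orbital integral in Theorem \ref{thm orbital integral} forcing each $\dim H_\pi^\lambda < \infty$.

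Interchanging the sum and the integral then gives
\[
\tau_g([d_\pi m_\xi]) = \sum_\lambda (\dim H_\pi^\lambda)\,\lambda(g),
\]
which is the formal weight character of the restriction $\pi|_T$. To identify this series with $\Theta_\pi(g)$ on $T^{\reg}$, I would appeal to Harish-Chandra's regularity theorem (making $\Theta_\pi$ a smooth function on $G^{\reg}$) together with Hecht--Schmid's proof of Blattner's formula, which asserts that the distributional Fourier expansion of $\Theta_\pi|_T$ in characters of $T$ has coefficients exactly the $T$-weight multiplicities $\dim H_\pi^\lambda$.

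The main obstacle is twofold: the interchange of summation and integration is delicate because the natural pointwise dominant $\sum_\lambda\|P_\lambda\pi(x^{-1})\xi\|^2\equiv 1$ is not integrable on $G/T$, and must instead be handled by first treating the positive partial sums $\sum_{\lambda\in F}\|P_\lambda\pi(x^{-1})\xi\|^2$ for finite $F$ and then extracting the $\lambda(g)$-phases using absolute convergence of the original orbital integral; and the identification of the resulting (conditionally convergent) weight series with $\Theta_\pi$ on $T^{\reg}$ requires nontrivial distributional input from Blattner--Hecht--Schmid and deserves careful phrasing.
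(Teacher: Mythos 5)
Your strategy is genuinely different from the paper's, but it contains two gaps that I do not think can be repaired as described. The first is the interchange of $\sum_\lambda$ with $\int_{G/T}$. You rightly note that the pointwise dominant $\sum_\lambda\|P_\lambda\pi(x^{-1})\xi\|^2\equiv\|\xi\|^2$ is not integrable on $G/T$, but the obstruction is more serious than a missing dominating function: the sum of the absolute values of the individual integrals is $d_\pi^{-1}\sum_\lambda\dim H_\pi^\lambda=d_\pi^{-1}\dim H_\pi=\infty$, so Fubini--Tonelli fails outright, and the target identity $\tau_g([d_\pi m_\xi])=\sum_\lambda(\dim H_\pi^\lambda)\lambda(g)$ cannot hold with an absolutely convergent right-hand side. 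It could at best hold for a specific summation order or regularisation, and ``treating positive partial sums and then extracting the $\lambda(g)$-phases from the absolute convergence of the orbital integral'' does not specify one; note also that the finiteness of $\int_{G/T}\|P_\lambda\pi(x^{-1})\xi\|^2\,d(xT)$ for an individual $\lambda$ does not follow from Theorem \ref{thm orbital integral}, since the convergence there relies on cancellation between weights. The second gap is the final identification. Blattner's formula (Hecht--Schmid) computes $K$-type multiplicities of $\pi|_K$; the statement you need --- that the regularised $T$-weight series of $\pi|_T$ represents the distribution $\Theta_\pi|_{T^{\reg}}$ --- is a different, nontrivial theorem. Moreover, the known proofs of Blattner's conjecture take Harish-Chandra's character formula on the compact Cartan as input, so invoking it would make the paper's application to Corollary \ref{cor char} circular; Remark \ref{rem indep} exists precisely to certify that no such input is used.

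The paper's own proof avoids weight decompositions entirely and works distributionally from the start: for $\varphi\in C_c^\infty(G)$ supported in the regular elliptic set, Dixmier's Proposition 14.4.3 gives $\tr\pi(\varphi)=d_\pi\int_G\int_G\varphi(g)m_\xi(xgx^{-1})\,dg\,dx$; since $m_\xi\in\cC(G)$ and $Z_G(g)$ is conjugate to $T$ (hence compact) for such $g$, Theorem \ref{thm orbital integral} makes the orbital integral absolutely convergent, Fubini applies, and the inner integral is recognised as $d_\pi^{-1}\tau_g([d_\pi m_\xi])$. Both $g\mapsto\tau_g([d_\pi m_\xi])$ and $\Theta_\pi$ are then continuous functions representing the same distribution on the regular elliptic set, hence equal there. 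If you want to rescue your computation, the fix is essentially the same move: pair against a test function on $T$ \emph{before} interchanging sum and integral --- but at that point the weight decomposition is doing no work and you have reproduced the paper's argument with extra machinery.
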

\begin{proof}
Let $\varphi \in C^{\infty}_c(G)$ be supported in the set of regular elliptic elements (i.e.\ the set of elements whose centraliser is a conjugate of $T$). Let $\pi(\varphi)$ be the trace-class operator on the representation space of $\pi$ defined by $\varphi$. Then the claim is that
\beq{eq taug char}
\tr(\pi(\varphi)) = \int_G \varphi(g) \tau_g([d_{\pi}m_{\xi}])\, dg.
\eeq
By Proposition 14.4.3 in \cite{Dixmier}, which is proved there via basic functional analysis, we have
\beq{eq dix}
\tr(\pi(\varphi)) = d_{\pi} \int_G \int_G \varphi(g) m_{\xi}(xgx^{-1}) \, dg\, dx.
\eeq
For general $\varphi \in C^{\infty}_c(G)$, the integral on the right hand side may diverge if the order of integration is reversed. But
if $g \in G$ is elliptic and regular, then $Z = Z_G(g)$ is conjugate to $T$, hence compact. Then by  Theorem \ref{thm orbital integral}, the integral 
\[
\int_G   m_{\xi}(xgx^{-1}) \, dx = \int_{G/Z}   m_{\xi}(xgx^{-1}) \, d(xZ).
\]
converges absolutely.
Since $\varphi$ is compactly supported inside the set of elliptic elements,
the integral
\[
 \int_G \int_G \varphi(g) m_{\xi}(xgx^{-1}) \, dx\, dg =
 \int_G \int_{G/Z} \varphi(g) m_{\xi}(xgx^{-1}) \, d(xZ) \, dg 
\] 
 converges absolutely as well.
So by Fubini's theorem, it equals
\[
 \int_G \int_G \varphi(g) m_{\xi}(xgx^{-1}) \, dg\, dx.
\]
Since $m_{\xi} \in \cC(G)$, we have
\[
 \tau_g([d_{\pi}m_{\xi}]) = d_{\pi} \int_{G/Z} m_{\xi}(xgx^{-1}) \, d(xZ).
\]
Hence  \eqref{eq dix} implies \eqref{eq taug char}.
\end{proof}

\subsection{$K$-theory classes  as indices}\label{sec ds ind}

The $K$-theory class $[d_{\pi}m_{\xi}] \in K_0(\cC(G))$ can be realised geometrically analogously to Schmid's realisation of the discrete series in Theorem 1.5 in \cite{Schmid76}. Consider the manifold $G/T$. The positive root system $R^+$ determined by $\lambda$ defines a $G$-invariant complex structure on $G/T$ such that, as complex vector spaces,
\[
T_{eT}(G/T) = \bigoplus_{\alpha \in R^+}\kg^{\C}_{\alpha}.
\]
Consider the holomorphic line bundle
\[
L_{\lambda - \rho} = G\times_T \C_{\lambda - \rho} \to G/T,
\]
where $T$ acts on $\C_{\lambda - \rho} = \C$ with weight $\lambda - \rho$. We have the twisted Dolbeault operator
$
\bar\partial_{L_{\lambda-\rho}}
$ on $\Omega^{0,*}(G/T; L_{\lambda-\rho})$. On the space $\Omega^{0,*}_{L^2}(G/T; L_{\lambda-\rho})$ of square integrable antiholomorphic differential forms with coefficients in $L_{\lambda-\rho}$, we have the adjoint operator $\bar\partial_{L_{\lambda-\rho}}^*$. These two operators combine to the Dolbeault--Dirac operator
\[
\bar\partial_{L_{\lambda-\rho}} + \bar\partial_{L_{\lambda-\rho}}^*
\]
on $\Omega^{0,*}_{L^2}(G/T; L_{\lambda-\rho})$. We consider the natural grading on forms by parities of degrees.
\begin{proposition} \label{prop ds index}
We have
\[
[d_{\pi}m_{\xi}] = (-1)^{\dim(G/K)/2}\ind_G(\bar\partial_{L_{\lambda-\rho}} + \bar\partial_{L_{\lambda-\rho}}^*) \quad \in K_0(\cC(G)).
\]
\end{proposition}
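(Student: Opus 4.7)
The plan would be to realize $\ind_G(\bar\partial_{L_{\lambda-\rho}} + \bar\partial_{L_{\lambda-\rho}}^*)$ geometrically via Schmid's $L^2$-realization of $\pi$ on $G/T$, and then identify the resulting class in $K_0(\cC(G))$ with $[d_\pi m_\xi]$ via the Morita equivalence of Lemma \ref{lem iso Kthry}. Concretely, I would invoke Schmid's theorem (Theorem 5.3 in \cite{Schmid76}): the space of square-integrable harmonic $(0,q)$-forms on $G/T$ with values in $L_{\lambda-\rho}$ vanishes for $q \neq s$, and in degree $q = s$ forms a unitary $G$-representation equivalent to $\pi$. Here $s$ is the number of noncompact positive roots determined by $\lambda$, which equals $\dim(G/K)/2$. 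Thus $\ker(\bar\partial_{L_{\lambda-\rho}} + \bar\partial_{L_{\lambda-\rho}}^*)$ sits entirely in Dolbeault degree $s$, and its concentration in that single parity contributes the sign $(-1)^s = (-1)^{\dim(G/K)/2}$ when fed into the $\Z_2$-graded index.

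Next, let $v \in \Omega^{0,s}_{L^2}(G/T; L_{\lambda-\rho})$ correspond to $\xi$ under Schmid's isometric identification, and let $P$ be the $G$-equivariant orthogonal projection onto the closed $G$-invariant subspace isometric to $\pi$. By the Schur orthogonality relations for discrete series matrix coefficients, $P$ is a smoothing operator whose integral kernel, expressed through the slice decomposition of Section 3 (with slice $N = K/T$), is given by $d_\pi m_\xi$ tensored with a finite-rank operator on $N$. Since $m_\xi \in \cC(G)$, the operator $P$ lies in the algebra $\cC(E)$ of Definition \ref{def SE}, and a direct calculation shows that the map $\Tr_N$ of \eqref{eq def TrN} sends $P$ to $d_\pi m_\xi$ (after the natural identifications arising from the slice factor). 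Hence by Lemma \ref{lem iso Kthry}, the class $[P] \in K_0(C^*(M)^G)$ corresponds to $[d_\pi m_\xi]\in K_0(\cC(G))$ under the Morita isomorphism.

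It remains to identify the coarse index $\ind_{C^*(M)^G}(D)$ with $(-1)^s [P]$. Heuristically this is because as $t\to\infty$, $e^{-tD^2}$ should tend to the projection onto $\ker D$, which is concentrated in the degree-$s$ piece of $E$; substituting into the coarse-index formula \eqref{eq coarse index} then produces the sign $(-1)^s$. Applying Roe's identification \cite{Roe02} of the coarse index with the analytic assembly map would then yield $\ind_G(D) = (-1)^{\dim(G/K)/2}[d_\pi m_\xi]$ in $K_0(\cC(G))$, which rearranges to the claimed formula.

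The main obstacle, as flagged in the introduction of the paper, is that zero need not be isolated in the spectrum of $D^2$ on $G/T$, so the limit $t\to\infty$ cannot be taken in the operator-norm sense. The way around this is to stay entirely at the level of $K_0$: the projection $P$ is constructed a priori from Schmid's realization and lies in $C^*(M)^G$; the coarse-index class is $t$-independent, and the two can be joined by a $K_0$-homotopy induced by a smooth spectral cutoff separating $\{0\}$ from the positive spectrum of $D^2$ (the positive-spectrum contribution being homotopic to zero via the Hilbert $C^*_r G$-module structure on $\cE$ from Lemma \ref{lem iso Kthry}). Making this homotopy precise, and keeping careful track of the parity sign $(-1)^s$ arising from the $\Z_2$-grading, will be the two main technical points.
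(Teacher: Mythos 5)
Your route is essentially the one the paper itself only sketches in the remark following Proposition \ref{prop ds index} (and only for linear $G$); it is genuinely different from the proof actually given. The paper never touches the analysis of $\bar\partial_{L_{\lambda-\rho}}+\bar\partial_{L_{\lambda-\rho}}^*$ on $G/T$: it establishes $[d_{\pi}m_{\xi}] = \DInd_K^G[V_{\lambda-\rho_c}]$ purely $K$-theoretically from Lafforgue's Lemmas 2.1.1 and 2.2.1 together with a lowest-$K$-type multiplicity count (Lemma \ref{lem ds DInd}), and separately identifies $\ind_G(\bar\partial_{L_{\lambda-\rho}}+\bar\partial_{L_{\lambda-\rho}}^*)$ with $(-1)^{\dim(G/K)/2}\DInd_K^G[V_{\lambda-\rho_c}]$ via Borel--Weil on $K/T$, the induction result Theorem \ref{thm Q Ind}, and the weight comparison of $S_{\kp}\otimes\C_{\rho_n}$ with $\Bigwedge_{\C}\kp$ (Lemma \ref{lem KInd Dolb}). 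That detour exists precisely to avoid the analytic step on which your argument hinges.

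The genuine gap is your last step. Joining $\ind_{C^*(M)^G}(D)$ to $(-1)^s[P]$ by ``a smooth spectral cutoff separating $\{0\}$ from the positive spectrum of $D^2$'' presupposes exactly what you concede may fail: if $0$ is not isolated in the spectrum, no continuous function is $1$ at $0$ and vanishes on the rest of the spectrum, and the claim that the positive-spectrum contribution is nullhomotopic amounts to inverting $D$ on the complement of its kernel in the Hilbert module $\cE$ --- i.e.\ to the spectral gap itself. Strong convergence $e^{-tD^2}\to P$ gives no convergence in $C^*(M)^G$, so the $t$-independence of the coarse-index class does not let you pass to $t=\infty$. This is precisely the obstruction flagged in the introduction and in the remark after the proof of Corollary \ref{cor char}; if it could be repaired this easily the paper would not need Sections 5.3--5.4. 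Two further caveats even granting a repair (say via Lemma II.10.$\gamma$.16 of \cite{ConnesBook}): Schmid's realisation theorem requires $G$ linear, whereas the Proposition is stated for general connected semisimple $G$; and for the intended application to Corollary \ref{cor char} one must verify that Schmid's theorem does not itself rest on Harish-Chandra's character formula --- the paper's Remark \ref{rem indep} is careful on exactly this point, which is another reason it avoids your route. Finally, a minor point: $\Tr_N(P)=d_{\pi}m_{\xi}$ cannot hold literally, since the embedding of $\calH_{\pi}$ into $L^2(E)$ need not have product form relative to the slice; one should only claim equality of the resulting classes in $K_0(\cC(G))$.
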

This result is  Corollary 2.8 in \cite{Hochs15}. However, in that paper, Lemma 1.5 in \cite{Hochs09} is used to prove Corollary 2.8, and  in the proof of Lemma 1.5 in \cite{Hochs09}, Harish-Chandra's character formula is used. So this would lead to a circular argument. To get around this, we give a proof of Proposition \ref{prop ds index} in Subsections \ref{sec Ind} and \ref{sec pf ds index} without using Lemma 1.5 in \cite{Hochs09}.  This also gives us the opportunity to record a short proof of an earlier induction result for the equivariant index, Theorem \ref{thm Q Ind}. Furthermore, we clarify the case where $G/K$ does not have a $G$-equivariant $\Spin$ structure, which was not treated in \cite{Hochs09}.

\begin{remark}
If $G$ is linear, then Proposition \ref{prop ds index} also follows from Schmid's construction of the discrete series.
Theorem 1.5 in \cite{Schmid76} implies that the kernel of  $\bar\partial_{L_{\lambda-\rho}} + \bar\partial_{L_{\lambda-\rho}}^*$ equals the representation space of $\pi$ in degree $(-1)^{\dim(G/K)/2}$, and zero in the other degree. Then the equivariant index of $\bar\partial_{L_{\lambda-\rho}} + \bar\partial_{L_{\lambda-\rho}}^*$ in $K_0(\cC(G)) = K_0(C^*_rG)$ equals $(-1)^{\dim(G/K)/2}$ times the kernel of this operator as a $C^*_rG$-module, see Lemma II.10.$\gamma$.16 in \cite{ConnesBook}. This kernel is $\pi$, which is also the image of the projection $d_{\pi}m_{\xi}$.
\end{remark}

After Proposition \ref{prop ds index} is proved, we can deduce Corollary \ref{cor char} from Theorem \ref{thm fixed pt} as follows.

\medskip
\begin{proofof}{Corollary \ref{cor char}}
Let $g \in T$ be regular, and in the set of elements for which Theorem \ref{thm fixed pt} holds. Then Propositions \ref{prop taug char} and \ref{prop ds index}, and Theorem \ref{thm fixed pt}, imply that\footnote{Actually, we only need Proposition \ref{prop fixed Spinc} here, rather than Theorem \ref{thm fixed pt}, but then the application of the arguments in Subsection 6.5 of \cite{HW}  becomes  less direct.} 
\beq{eq fixed char}
\Theta_{\pi}(g) = (-1)^{\dim(G/K)} \int_{T(G/T)^g} \frac{\ch\bigl([\sigma_{\bar\partial_{L_{\lambda-\rho}} + \bar\partial_{L_{\lambda-\rho}}^*}|_{(G/T)^g}](g)\bigr)\Todd(T(G/T)^g \otimes \C)}{\ch\bigl([\Bigwedge \cN \otimes \C] (g) \bigr)}.
\eeq
Suppose that 
 the powers of $g$ are dense in $T$. Then
 the cutoff function $c^g$ may be taken to be constant 1, since $(G/T)^g = (G/T)^T$ is compact. It was shown in Subsection 6.5 of \cite{HW} that the right hand side of \eqref{eq fixed char} equals the right hand side of \eqref{eq char}. The set of elements $g \in T$ with the assumed properties is dense in the set of regular elements (or indeed, in the whole torus $T$). Since both sides of \eqref{eq char} are  analytic functions on the regular elements, the result follows.
\end{proofof}

\begin{remark}
In our proof of Harish-Chandra's character formula, we made essential use of $K$-theory. We could have stated and proved a fixed point theorem by defining the index as the right hand side of the equality in Proposition \ref{prop taug index Trg}. But without using $K$-theory, one does not immediately see that, for $g\in G$ and $t>0$,
\beq{eq theta tr}
  \Theta_{\pi}(g) = (-1)^{\dim(G/K)/2}\bigl( \Tr_g (e^{-t\bar \partial_{\lambda - \rho}^- \bar \partial_{\lambda - \rho}^+}) -  \Tr_g (e^{-t\bar \partial_{\lambda - \rho}^+ \bar \partial_{\lambda - \rho}^-}) \bigr).
\eeq
Indeed, if zero were isolated in the spectrum of $\bar \partial_{\lambda - \rho} + \bar \partial_{\lambda - \rho}^*$, then the heat kernel $e^{-t\bar \partial_{\lambda - \rho}^{\pm} \bar \partial_{\lambda - \rho}^{\mp}}$ converges to projection onto the $L^2$-kernel of $\bar \partial_{\lambda - \rho}^{\mp}$ as $t \to \infty$. By Theorem 1.5 in \cite{Schmid76}, if $\dim(G/K)/2$ is even, then the $L^2$-kernel of $\bar \partial_{\lambda - \rho}^{+}$
 is the representation space of $\pi$, while the $L^2$-kernel of $\bar \partial_{\lambda - \rho}^{-}$ is zero.  If $\dim(G/K)/2$ is odd, then this is the other way around. That would imply \eqref{eq theta tr}. However, it is not clear if zero is isolated in the spectrum of $\bar \partial_{\lambda - \rho} + \bar \partial_{\lambda - \rho}^*$, and using $K$-theory allows us to get around this issue.
\end{remark}

It remains to prove Proposition \ref{prop ds index}.

\subsection{Induction of indices} \label{sec Ind}

In this subsection only, let $G$ be any almost connected Lie group, and let $K<G$ be maximal compact.
Theorem 4.6 in \cite{Hochs09} is a result about the relation between equivariant indices for $K$ and $G$. (It was called \emph{quantisation commutes with induction} there.) This result was slightly expanded on and applied in \cite{Hochs15, HM16, HM14}. In Theorem 43 in 
\cite{GMW}, a shorter and more direct proof than the one in \cite{Hochs09} was given. Here we present an even shorter proof of this result. We also explain how it generalises to cases where $G/K$ has no $G$-equivariant $\Spin$-structure.

Let $\tilde K$ be a double cover of $K$ such that the adjoint action $\Ad\colon \tilde K \to \SO(\kp)$ lifts to a homomorphism $\widetilde{\Ad}\colon \tilde K \to \Spin(\kp)$. Let $R(\tilde K)_- \subset R(K)$ be the free abelian group generated by the irreducible representations of $\tilde K$ for which the nontrivial element of the kernel of $\tilde K \to K$ acts as minus the identity. Set $l := \dim(G/K)/2$. The Dirac induction map is an isomorphism of abelian groups
\[
\DInd_K^G\colon R(\tilde K)_- \to K_{l}(C^*_rG).
\]
See \cite{Wassermann87} and \cite{CEN} for the definition of this map and fact that it is an isomorphism. (We will not use the fact that this map is an isomorphism, however.)

Let $N$ be a compact manifold on which $K$ acts. Consider the equivariant $K$-homology groups \cite{Connes94, Higson00}  $K_0^{\tilde K}(N)$ and $K_0^G(G\times_K N)$  of the spaces $N$ and $G\times_K N$, respectively. Let 
$
K_0^{\tilde K}(N)_- \subset K_0^{\tilde K}(N)
$
be the abelian group
\[
K_0^{\tilde K}(N)_- = \{a \in K_0^{\tilde K}(N); \ind_{\tilde K}(a) \in R(\tilde K)_- \}.
\]
In Section 5 of \cite{Hochs09}, a map
\[
\KInd_K^G\colon K_0^K(N) \to K_l^G(G\times_K N)
\]
between equivariant $K$-homology groups
was defined in cases where the lift $\widetilde{\Ad}$ already exists for $K$ itself. This generalises directly to a map
\[
\KInd_K^G\colon K_0^{\tilde K}(N)_- \to K_l^G(G\times_K N).
\]
This map has the following properties that we will use.
\begin{enumerate}
\item If $N = \pt$ is a point, then for all $V \in R(\tilde K)_- = K_0^{\tilde K}(\pt)_-$, we have
\beq{eq KInd 1}
\KInd_K^G[V] = [D_{G,K}^V],
\eeq
the $K$-homology class of the Dirac operator on $G/K$ used in the definition of Dirac induction.
\item If $p$ is the map from $N$ to a point, and $p^G \colon G\times_K N \to G/K$ is the induced map, then the following diagram commutes:
\beq{eq KInd 2}
\xymatrix{
K_l^G(G\times_K N) \ar[r]^-{p^G_*}& K_l^G(G/K) \\
K_0^{\tilde K}(N)_- \ar[u]^-{\KInd_K^G} \ar[r]^{ p_*}& K_0^{\tilde K}(\pt)_-. \ar[u]_-{\KInd_K^G}
}
\eeq
\end{enumerate}

We can now state the induction result for equivariant indices.
\begin{theorem} \label{thm Q Ind}
The following diagram commutes:
\[
\xymatrix{
K_l^G(G\times_K N) \ar[r]^-{\ind_G}& K_l(C^*_rG) \\
K_0^{\tilde K}(N)_- \ar[u]^-{\KInd_K^G} \ar[r]^{\ind_{\tilde K}}& R(\tilde K)_-. \ar[u]_-{\DInd_K^G}
}
\]
\end{theorem}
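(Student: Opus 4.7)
The plan is to reduce the statement to the base case $N = \pt$ via a diagram chase, using the two listed properties of $\KInd_K^G$ together with naturality of the analytic assembly map under proper equivariant pushforward.

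Fix $a \in K_0^{\tilde K}(N)_-$, and let $p\colon N \to \pt$ denote the constant map, so that $p^G\colon G\times_K N \to G/K$ is the induced proper $G$-equivariant map. First, I would invoke naturality of the analytic assembly map: for a proper $G$-equivariant map between cocompact proper $G$-manifolds, the assembly map on the source factors through the equivariant $K$-homology pushforward followed by the assembly map on the target. This is a standard feature of the Baum--Connes assembly map (see e.g.\ \cite{Connes94}), and applied to $p^G$ it gives
\[
\ind_G(\KInd_K^G(a)) = \ind_G\bigl(p^G_*(\KInd_K^G(a))\bigr),
\]
where the second $\ind_G$ denotes the assembly map for $G/K$.

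Second, I would apply property (2) of $\KInd_K^G$, i.e.\ the commutative square \eqref{eq KInd 2}, to rewrite $p^G_* \circ \KInd_K^G$ as $\KInd_K^G \circ p_*$. By the definition of the compact equivariant index as pushforward to a point, $p_*\colon K_0^{\tilde K}(N) \to K_0^{\tilde K}(\pt) = R(\tilde K)$ coincides with $\ind_{\tilde K}$, and it lands in $R(\tilde K)_-$ by the very definition of $K_0^{\tilde K}(N)_-$. Setting $V := \ind_{\tilde K}(a) \in R(\tilde K)_-$, property (1) then identifies $\KInd_K^G[V] = [D_{G,K}^V]$ in $K_l^G(G/K)$.

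Finally, by the very definition of Dirac induction, $\ind_G[D_{G,K}^V] = \DInd_K^G(V)$. Chaining the three identifications produces
\[
\ind_G(\KInd_K^G(a)) = \ind_G(\KInd_K^G[V]) = \ind_G[D_{G,K}^V] = \DInd_K^G(V) = \DInd_K^G(\ind_{\tilde K}(a)),
\]
which is exactly the asserted commutativity. The main conceptual point is that properties (1) and (2) of $\KInd_K^G$ have been arranged precisely to encode this compatibility; once they are granted, the only outside ingredient is the standard naturality of the assembly map, and there is no substantive obstacle beyond bookkeeping.
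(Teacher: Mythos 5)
Your proposal is correct and is essentially identical to the paper's proof: both are the same diagram chase combining naturality of the assembly map under the pushforward $p^G_*$, the commutative square \eqref{eq KInd 2} with $p_* = \ind_{\tilde K}$, and property \eqref{eq KInd 1} together with the definition of $\DInd_K^G$ as the assembly map applied to $[D_{G,K}^V]$. The only difference is the order in which the three ingredients are chained, which is immaterial.
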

\begin{proof}
By \eqref{eq KInd 1}, we have for all $V \in R(\tilde K)_-$,
\[
\ind_G(\KInd_K^G[V] ) = \DInd_K^G[V].
\]
This equality, together with 
commutativity of \eqref{eq KInd 2} and the fact that $p_*$ is the $\tilde K$-equivariant index, implies that for all $a \in K_0^{\tilde K}(N)_-$,
\[
\DInd_K^G(\ind_{\tilde K}(a)) = \ind_G \circ p^G_* \circ \KInd_K^G(a).
\]
By naturality of the assembly map, the following diagram commutes:
\[
\xymatrix{
K_l^G(G\times_K N) \ar[r]^-{\ind_G} \ar[d]_-{p^G_*}& K_l(C^*_rG). \\
K_l^G(G/K) \ar[ur]_-{\ind_G} &
}
\]
So the claim follows.
\end{proof}

\subsection{Proof of Proposition \ref{prop ds index}} \label{sec pf ds index}

We return to the setting of Subsection \ref{sec ds ind}, where $G$ is connected and semisimple with discrete series.
Let $\rho_c$ and $\rho_n$ be half the sums of the compact and noncompact roots in $R^+$, respectively. 
Let $V_{\lambda-\rho_c}$ be the irreducible representation of $K$ with highest weight $\lambda - \rho_c$. Since $(\lambda -\rho_c) - \rho_n$ is integral for $K$, the space $V_{\lambda - \rho_c}\otimes S_{\kp}$ has a well-defined representation of $K$, even if $S_{\kp}$ itself does not (see the bottom of page 21 in \cite{Atiyah77}). So it lifts to an element of
 $R(\tilde K)_-$.

 
\begin{lemma}\label{lem ds DInd}
We have
\[
[d_{\pi}m_{\xi}] = 
\DInd_K^G[V_{\lambda - \rho_c}] \quad \in K_0(C^*_rG).
\]
\end{lemma}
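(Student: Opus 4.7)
My plan is to prove Lemma \ref{lem ds DInd} by realising both sides of the desired equality as the $K$-theory class of the discrete series representation $\pi$ in $K_0(C^*_rG)$.

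First I would unpack the definition of $\DInd_K^G[V_{\lambda-\rho_c}]$. By construction (see \cite{Wassermann87, CEN}), this is the equivariant index of the $G$-invariant Dirac operator $D_{G/K}^{\lambda-\rho_c}$ on $G/K$ with coefficients in the $G$-equivariant Hermitian vector bundle $G\times_K(S_{\kp}\otimes V_{\lambda-\rho_c})$. Even when $G/K$ admits no $G$-equivariant $\Spin$ structure (i.e.\ when $\widetilde{\Ad}$ does not descend to $K$), this bundle is well-defined because $V_{\lambda-\rho_c}\otimes S_{\kp}$ is a genuine representation of $K$, since $(\lambda-\rho_c)-\rho_n = \lambda-\rho$ is integral for $K$ (or rather, since the highest weight of any constituent is integral for $K$).

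Next I would identify the equivariant index. The classical theorem of Atiyah--Schmid (Theorem 9.3 in \cite{AtiyahSchmid}, based on Parthasarathy's formula for $D^2_{G/K}$ in terms of the Casimir) asserts that, for the regular integral parameter $\lambda$, the $L^2$-kernel of the positive part $D^{\lambda-\rho_c,+}_{G/K}$ is isomorphic to the representation space of the discrete series $\pi$, while the $L^2$-kernel of the negative part vanishes. This gives
\[
\DInd_K^G[V_{\lambda-\rho_c}] = [\ker D^{\lambda-\rho_c,+}_{G/K}] = [H_\pi] \quad\in K_0(C^*_rG),
\]
where $[H_\pi]$ denotes the class of the discrete series as a finitely generated projective $C^*_rG$-module. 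It is important that we invoke Atiyah--Schmid here rather than the character formula, to avoid circularity. The conventions can be checked so that no sign appears: Dirac induction uses $S_{\kp}$ whereas the Dolbeault realisation on $G/T$ uses $\bigwedge^{0,*}$, and the sign $(-1)^{\dim(G/K)/2}$ in Proposition \ref{prop ds index} will enter through Theorem \ref{thm Q Ind} relating the two.

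Finally I would identify $[H_\pi]$ with $[d_\pi m_\xi]$. This is a standard Plancherel-type fact: because $\pi$ is square-integrable, the orthogonal projection of $L^2(G)$ onto the $\pi$-isotypic summand (restricted to the cyclic subspace generated by $\xi$) is given by convolution with the element $d_\pi m_\xi$, and this element is therefore a self-adjoint idempotent in $\cC(G)$ whose image under the left regular representation is a rank-one projection inside the direct summand $\pi(C^*_rG) \cong \cK(H_\pi)$ of $C^*_rG$. Hence $[d_\pi m_\xi] = [H_\pi] \in K_0(\cC(G)) = K_0(C^*_rG)$, using Theorem \ref{thm j iso}. Combining these three identifications yields the claim.

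The main obstacle in this plan is the second step: establishing the precise identification of the $L^2$-kernel of the Dirac operator on $G/K$ with the discrete series representation without invoking Harish-Chandra's character formula (since that is what we are trying to prove). This is exactly what Atiyah--Schmid and Parthasarathy provide, but some care is needed in matching the parametrisation ($\lambda$ versus $\lambda-\rho_c$ versus the lowest $K$-type $\lambda+\rho-2\rho_c$), and in checking that the sign conventions in the definition of $\DInd_K^G$ match those used here.
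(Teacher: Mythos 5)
Your route is genuinely different from the paper's. The paper never touches the Dirac operator on $G/K$ analytically: it quotes Lafforgue's Lemmas 2.1.1 and 2.2.1, which (together with the Connes--Kasparov isomorphism) give the purely $K$-theoretic identity
$[d_{\pi}m_{\xi}] = \bigl( \dim(\calH_{\pi} \otimes (S_{\kp}^+)^* \otimes V^*)^K - \dim(\calH_{\pi} \otimes (S_{\kp}^-)^* \otimes V^*)^K \bigr)\DInd_K^G[V]$
for the unique $V \in \hat{\tilde K}\cap R(\tilde K)_-$ making the right hand side nonzero, and then checks by an elementary lowest-$K$-type count that for $V = V_{\lambda-\rho_c}$ this multiplicity equals $1$: the lowest $K$-type $V_{\lambda-\rho_c+\rho_n}$ of $\pi$ occurs once in $S_{\kp}^+\otimes V_{\lambda-\rho_c}$ and not at all in $S_{\kp}^-\otimes V_{\lambda-\rho_c}$, and no other constituent of $S_{\kp}^{\pm}\otimes V_{\lambda-\rho_c}$ appears in $\calH_{\pi}|_K$. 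This keeps the representation-theoretic input to the short, auditable list of Remark \ref{rem indep}. Your plan instead passes through the $L^2$-kernel of the twisted Dirac operator on $G/K$; your third step (Schur orthogonality identifies $d_\pi m_\xi$ with a rank-one projection in the $\cK(\calH_\pi)$-summand of $C^*_rG$) is fine and is essentially what Lafforgue's lemmas encode.

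Two points in your second step need more care. First, $\DInd_K^G[V_{\lambda-\rho_c}] = [\ker D^+]$ is not automatic: the index class in $K_0(C^*_rG)$ equals the class of the kernel as a projective module only when $0$ is isolated in the spectrum of the operator (Lemma II.10.$\gamma$.16 in \cite{ConnesBook}); this is precisely the obstruction the paper highlights for the Dolbeault operator on $G/T$. On $G/K$ it can be repaired via Parthasarathy's formula $D^2 = -\Omega + c_\lambda$ and regularity of $\lambda$, but you must say so. Second, and more importantly for the logic of the whole paper, invoking Atiyah--Schmid's identification of the kernel with the discrete series of parameter $\lambda$ imports a deep theorem whose independence of Harish-Chandra's character formula has to be verified (Atiyah--Schmid do rely on several of Harish-Chandra's results, e.g.\ the regularity theorem for invariant eigendistributions and estimates on matrix coefficients). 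The paper's route makes non-circularity easy to check; yours shifts that burden onto an external source and, as written, asserts rather than establishes it.
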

\begin{proof}
 Let $\calH_{\pi}$ be the representation space of $\pi$.
By Lemmas 2.1.1 and 2.2.1 in \cite{Lafforgue02}, and the fact that the Connes--Kasparov conjecture is true, we have
\[
[d_{\pi} m_{\xi}] =\bigl( \dim(\calH_{\pi} \otimes (S_{\kp}^+)^* \otimes V^*)^K -  \dim(\calH_{\pi} \otimes (S_{\kp}^-)^* \otimes V^*)^K \bigr)\DInd_K^G[V],
\]
for the unique $V \in \hat {\tilde K} \cap R(\tilde K)_-$ for which the right hand side is nonzero. (See also the comment below Lemma 2.2.1 in \cite{Lafforgue02}.) This was also noted on page 562 of \cite{Wassermann87}. Now if $V = V_{\lambda - \rho_c}$, then $S_{\kp}^+ \otimes V$ contains the irreducible representation $V_{\lambda - \rho_c + \rho_n}$ with highest weight $\lambda - \rho_c + \rho_n$ with multiplicity one, whereas $S_{\kp}^- \otimes V$ does not contain  $V_{\lambda - \rho_c + \rho_n}$. Since $V_{\lambda - \rho_c + \rho_n}$ is the lowest $K$-type of $\pi$, it occurs with multiplicity one in $\calH_{\pi}|_K$. All other irreducible constituents of $S_{\kp}^+ \otimes V_{\lambda - \rho_c}$ have highest weights lower than $\lambda - \rho_c + \rho_n$, in the ordering where positive roots are positive. So they do not occur in $\calH_{\pi}|_K$. Hence
\[
 \dim(\calH_{\pi} \otimes (S_{\kp}^+)^* \otimes V_{\lambda-\rho_c}^*)^K -  \dim(\calH_{\pi} \otimes (S_{\kp}^-)^* \otimes V_{\lambda - \rho_c}^*)^K = 1.
\]
%
\end{proof}

\begin{remark}
Interestingly, Lemma \ref{lem ds DInd} contradicts (5.3) in \cite{Hochs15} and Example (4.25) in \cite{Connes94}. 

The difference between Lemma \ref{lem ds DInd} and (5.3) in \cite{Hochs15} is a sign $(-1)^{\dim(G/K)/2}$. But (5.3) in \cite{Hochs15} is based on Lemma 1.5 in \cite{Hochs09}, where the same sign in the expression for the character of $S_{\kp}^*$ is missing. This is because there should not be a complex conjugate in the middle of the third display on page 873 of \cite{Hochs09}. See also (4.1) in \cite{Atiyah77}, where dualising $S_{\kp}$ leads to the introduction of this sign.

Example (4.25) in \cite{Connes94} is the  case  where $G = \SL(2,\R)$. For $n \in \Z_{\geq 1}$, let $\pi = D^+_n$ be the holomorphic discrete series representation with Harish-Chandra parameter $n\alpha/2$, where $\alpha$ is the standard positive root for the Cartan subgroup $T = K = \SO(2)$. For $k \in \Z$, let $V_{k}$ be the irreducible representation of $T$ with weight $k\alpha/2$, i.e.\ the standard representation of the circle on $\C$ with weight $k$. It is stated in Example (4.25) in \cite{Connes94} that
\[
[d_{\pi}m_{\xi}]  = - \DInd_K^G[V_{1-n}],
\]
whereas Lemma \ref{lem ds DInd} yields
\[
[d_{\pi}m_{\xi}]  = \DInd_K^G[V_{n}]
\]
in this case. 

The difference can be explained by two facts. The first is that in Example (4.25) in \cite{Connes94} it seems to be used that the $K$-types
 of $D^+_n$ are $V_n$, $V_{n+2}$, $V_{n+4}$ etc., whereas that should be  $V_{n+1}$, $V_{n+3}$, $V_{n+5}$ etc. This may be due to a different parametrisation of the discrete series, although the parameter $n$ runs over the positive integers as usual. The second fact is that in Example (4.25) in \cite{Connes94}, the space $(\calH_{\pi} \otimes S_{\kp}^{\pm}\otimes V_n)^K$ is used where we use $(\calH_{\pi} \otimes (S_{\kp}^{\pm})^*\otimes V_n^*)^K$. It seems to us that the space $(\calH_{\pi} \otimes (S_{\kp}^{\pm})^*\otimes V_n^*)^K$ is needed here, as in Lemma 2.1.1 in \cite{Lafforgue02}, in \cite{Wassermann87} and in (5.7) in \cite{Atiyah77}. This leads to Lemma \ref{lem ds DInd}, which is compatible with the fact that the kernel of the Dirac operator whose equivariant index is $\DInd_K^G[V_{\lambda - \rho_c}]$ is $\calH_{\pi}$ in even degree and zero in odd degree, as shown in \cite{Atiyah77, Parthasarathy72}. By Lemma II.10.$\gamma$.16 in \cite{ConnesBook}, this also implies Lemma \ref{lem ds DInd} in the form that we have stated it.
\end{remark}

Consider the $K$-invariant complex structure on $K/T$ defined by the compact roots in $R^+$. Consider the holomorphic line bundle
\[
L_{\lambda - \rho_c} = K\times_T \C_{\lambda - \rho_c} \to K/T.
\]
Let $\bar \partial^{K/T}_{L_{\lambda - \rho_c}}$ be the Dolbeault operator on $K/T$ coupled to $L_{\lambda - \rho_c}$.
\begin{lemma}\label{lem KInd Dolb}
We have
\beq{eq KInd Dolb}
\KInd_K^G[\bar \partial^{K/T}_{L_{\lambda - \rho_c}} + (\bar \partial^{K/T}_{L_{\lambda - \rho_c}})^*] = (-1)^{\dim(G/K)/2} [\bar\partial_{L_{\lambda-\rho}} + \bar\partial_{L_{\lambda-\rho}}^*] \quad \in K_0^G(G/T).
\eeq
\end{lemma}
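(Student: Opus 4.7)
The plan is to realize the Dolbeault--Dirac operator on $G/T$ coupled to $L_{\lambda-\rho}$ as the ``product Dirac'' combining the Spin-Dirac operator $D_{G,K}$ on $G/K$ with the Dolbeault--Dirac on $K/T$ coupled to $L_{\lambda-\rho_c}$, under the identification $G/T = G\times_K (K/T)$. The sign $(-1)^{l}$, with $l := \dim(G/K)/2$, will emerge from comparing the form-parity grading of $\wedge^{0,*}\kp^*$ with the Spin grading of $S_{\kp}$.

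First, I would decompose the spinor bundle along the projection $G/T \to G/K$. At $eK$, the complex structure on $G/T$ restricts to the complex structure on $K/T$ determined by the compact positive roots in the vertical direction, and to the complex structure on $\kp$ determined by the noncompact positive roots in the horizontal direction, so that as $G$-equivariant Clifford bundles
\[
\wedge^{0,*}T^*(G/T)\otimes L_{\lambda-\rho}\;\cong\;G\times_K\bigl(\wedge^{0,*}T^*(K/T)\otimes\wedge^{0,*}\kp^*\otimes \C_{\lambda-\rho}\bigr).
\]
On the $K$-representation level, $\wedge^{0,*}\kp^*\cong S_{\kp}\otimes \C_{\rho_n}$ as ungraded Clifford modules. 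A super-character comparison at $e^X$ for $X \in \kt$ pits
\[
\prod_{\alpha\in R^+_n}\bigl(1-e^{\alpha(X)}\bigr)\quad \text{against}\quad e^{\rho_n(X)}\prod_{\alpha\in R^+_n}\bigl(e^{\alpha(X)/2}-e^{-\alpha(X)/2}\bigr),
\]
whose ratio is $(-1)^l$. Hence the two natural $\Z_2$-gradings on $\wedge^{0,*}\kp^*$ (form parity) and on $S_{\kp}\otimes\C_{\rho_n}$ (Spin) differ by an overall sign $(-1)^l$. Writing $\C_{\lambda-\rho}=\C_{\lambda-\rho_c}\otimes \C_{-\rho_n}$ and cancelling $\C_{\rho_n}$ with $\C_{-\rho_n}$, I would obtain an isomorphism of ungraded $G$-equivariant Clifford bundles on $G/T$
\[
\wedge^{0,*}T^*(G/T)\otimes L_{\lambda-\rho}\;\cong\;G\times_K\bigl(S_{\kp}\otimes\wedge^{0,*}T^*(K/T)\otimes L_{\lambda-\rho_c}\bigr),
\]
under which the two natural $\Z_2$-gradings differ by $(-1)^l$.

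Next, by the decomposition of the Dirac operator recorded in Proposition 3.1 of \cite{HSIII} (used already in Section 4.1), the Dolbeault--Dirac operator $\bar\partial_{L_{\lambda-\rho}}+\bar\partial_{L_{\lambda-\rho}}^*$ on $G/T$ equals, up to lower-order terms irrelevant for its $K$-homology class,
\[
D_{G,K}\otimes 1+\varepsilon\otimes\bigl(\bar\partial^{K/T}_{L_{\lambda-\rho_c}}+(\bar\partial^{K/T}_{L_{\lambda-\rho_c}})^*\bigr),
\]
with $\varepsilon$ the $\Z_2$-grading operator on $S_{\kp}$. By the construction of $\KInd_K^G$ in Section 5 of \cite{Hochs09}, this is precisely the operator representing $\KInd_K^G[\bar\partial^{K/T}_{L_{\lambda-\rho_c}}+(\bar\partial^{K/T}_{L_{\lambda-\rho_c}})^*]$. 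Since reversing the $\Z_2$-grading of a Kasparov module negates its class in $K$-homology, the grading discrepancy $(-1)^l$ from the first step translates directly into the sign relating the two $K$-homology classes, yielding the stated equality.

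The main obstacle is the careful bookkeeping of $\Z_2$-gradings: pinpointing the sign $(-1)^l$ via the super-character comparison, and verifying that the Dirac decomposition of Proposition 3.1 of \cite{HSIII} is compatible with the above spinor-bundle identification, so that the sign propagates correctly to the $K$-homology level.
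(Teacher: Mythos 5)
Your proposal is correct and follows essentially the same route as the paper: both identify the induced Clifford bundle on $G/T = G\times_K(K/T)$, reduce the statement to the isomorphism $S_{\kp}\otimes\C_{\rho_n}\cong\Bigwedge_{\C}\kp$ as $T$-representations, and locate the sign $(-1)^{\dim(G/K)/2}$ in the discrepancy between the form-parity and spinor gradings. The only cosmetic difference is that you extract the sign from a supercharacter (Weyl-denominator) identity, whereas the paper enumerates the weights and their parities explicitly.
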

\begin{proof}
By definition of the map $\KInd_K^G$  in Section 5 of \cite{Hochs09}, the left hand side of \eqref{eq KInd Dolb} is represented by a Dirac operator on 
 the bundle
\[
\xymatrix{
G\times_K\bigl( (K\times_T \Bigwedge_{\C} \kk/\kt \otimes \C_{\lambda - \rho}) \otimes \C_{\rho_n} \otimes S_{\kp} \bigr) \ar@{=}[r]^-{\sim} \ar[d]&
G\times_T  \bigl(\Bigwedge_{\C} \kk/\kt \otimes \C_{\lambda - \rho} \otimes  (S_{\kp} \otimes \C_{\rho_n}) \bigr)\ar[d]\\
G\times_K(K/T) \ar@{=}[r]^-{\sim} & G/T.
}
\]
Here $\Bigwedge_{\C}$ denotes the exterior algebra of complex vector spaces, and we used that $\C_{\rho_n} \otimes S_{\kp}$ is always a well-defined representation of $K$, 
see the bottom of page 21 in \cite{Atiyah77}. 
%

The operator $\bar \partial_{\lambda - \rho}$ acts on sections of the bundle
\[
G\times_T \Bigwedge_{\C} \kg/\kt \otimes \C_{\lambda - \rho} = G\times_T \Bigwedge_{\C} \kk/\kt \otimes \C_{\lambda -\rho} \otimes  \Bigwedge_{\C} \kp.
\]
Here  we used the fact that $\kp \hookrightarrow \kg/\kt$ is a complex subspace.
Therefore, it is enough to show that
\[
S_{\kp} \otimes \C_{\rho_n} = \Bigwedge_{\C} \kp
\]
as  representations of $T$, with the same grading if $\dim(G/K)/2$ is even, and the opposite grading if $\dim(G/K)/2$ is odd. 

On page 10 of \cite{Parthasarathy72}, it is noted that the set of weights of the representation of $T$ in $S_{\kp}$ is
\[
\Bigl\{ \frac{1}{2} \sum_{\alpha \in R^+}\varepsilon_{\alpha} \alpha; 
\varepsilon_{\alpha} = \pm 1 \Bigr\}. 
\]
The multiplicity of each weight is the number of ways it can be written in the above form.
The grading is according to the parity of the number of $\alpha$ with $\varepsilon_{\alpha} = -1$. So the weights of the representation of $T$ in $S_{\kp} \otimes \C_{\rho_n}$ are
\[
\Bigl\{
\frac{1}{2} \sum_{\alpha \in R^+} (\varepsilon_{\alpha}+1) \alpha; \varepsilon_{\alpha} = \pm 1 
\Bigr\}
\]
Now
\[
\frac{1}{2} \sum_{\alpha \in R^+} (\varepsilon_{\alpha}+1) \alpha = \sum_{\alpha \in A} \alpha, 
\]
where
\begin{equation} \label{eq S R+}
A = \{\alpha \in R^+; \varepsilon_{\alpha} = 1\}.
\end{equation}

The set of $T$-weights of $\Bigwedge_{\C} \kp$ is
\[
\Bigl\{
 \sum_{\alpha \in A} \alpha; A \subset R^+
\Bigr\}.
\]
 The multiplicity of each weight is the number of ways it can be written in this form. We conclude that the representations $S_{\kp} \otimes \C_{\rho_n}$ and $\Bigwedge_{\C} \kp$ have the same set of weights, and that each weight occurs with the same multiplicity.
 
 The grading on $\Bigwedge_{\C} \kp$ is according to the parity of exterior degrees. These degrees correspond to the number of elements of $S$ is the above expression for the weights. Under the correspondence \eqref{eq S R+}, this number of elements corresponds to the number of $\alpha \in R^+$ for which $\varepsilon_{\alpha} = 1$. If $\#R^+ = \dim(G/K)/2$ is even, this equals the number of $\alpha$ for which $\varepsilon_{\alpha} = -1$.
 So in that case, we have $S_{\kp} \otimes \C_{\rho_n} = \Bigwedge_{\C} \kp$ as graded representations of $T$.  If $\dim(G/K)/2$ is odd, then the parity of the number of elements of $S$ is the opposite of the parity of the number of $\alpha \in R^+$ for which $\varepsilon_{\alpha} = -1$ under the correspondence \eqref{eq S R+}. Then $S_{\kp} \otimes \C_{\rho_n}$ is isomorphic to  $\Bigwedge_{\C} \kp$ with its grading reversed, as a graded representation of $T$.
\end{proof}

\begin{proofof}{Proposition \ref{prop ds index}}
Successively applying Lemma \ref{lem ds DInd}, the Borel--Weil theorem, Theorem \ref{thm Q Ind},  and Lemma \ref{lem KInd Dolb},  we find that
\[
\begin{split}
[d_{\pi}m_{\xi}] &=\DInd_K^G[V_{\lambda - \rho_c}] \\
	&=  \DInd_K^G \bigl(\ind_K(\bar \partial^{K/T}_{L_{\lambda - \rho_c}} + (\bar \partial^{K/T}_{L_{\lambda - \rho_c}})^*) \bigr) \\
	&=  \ind_G\bigl(\KInd_K^G[\bar \partial^{K/T}_{L_{\lambda - \rho_c}} + (\bar \partial^{K/T}_{L_{\lambda - \rho_c}})^*]\bigr)\\
&= (-1)^{\dim(G/K)/2}\ind_G(\bar\partial_{L_{\lambda-\rho}} + \bar\partial_{L_{\lambda-\rho}}^*).
\end{split}
\]
\end{proofof}

\begin{remark} \label{rem indep}
It is of course important that we did not implicitly use Harish-Chandra's character formula to prove Corollary \ref{cor char}. We used the following results from representation theory, all of which are independent of the character formula. 
\begin{itemize}
\item The properties of Harish-Chandra's Schwartz algebra $\cC(G)$, which are part of general theory that does not involve results about the discrete series. (Except for the fact that $\cC(G)$ contains $K$-finite matrix coefficients of discrete series representations, which does not rely on information about characters.)
\item Existence of discrete series representations with given infinitesimal characters, under the condition that $\rank(G)=\rank(K)$. See Theorem 9.20 in \cite{Knapp}. (We did not use the necessity of the condition that $\rank(G)=\rank(K)$, or the exhaustion of the discrete series.)
\item Proposition 14.4.3 in \cite{Dixmier}, which is proved directly there. (This is used in the proof of Proposition \ref{prop taug char}.)
\item Lemmas 2.1.1 and  2.2.1 in \cite{Lafforgue02}. These are independent of Harish-Chandra's work. These lemmas are steps in Lafforgue's  independent proofs of the exhaustion of the discrete series, and of the necessary condition that $\rank(G)=\rank(K)$ for $G$ to have a discrete series. (These lemmas are used in the proof of Lemma \ref{lem ds DInd}.)
\item The fact that the Connes--Kasparov conjecture is true for connected semisimple Lie groups. Wassermann's proof in \cite{Wassermann87} (for linear reductive groups) was based on representation theory, including the classification of discrete series representations, but Lafforgue's proof in \cite{Lafforgue02} does not involve any knowledge about the discrete series. (This is used in the proof of Lemma \ref{lem ds DInd}.)
\end{itemize}
\end{remark}

\begin{small}

\bibliographystyle{plain}
\bibliography{mybib}

\end{small}

\end{document}